\newtheorem{theorem}{Theorem}[section]
\newtheorem{definition}[theorem]{Definition}
\newtheorem{proposition}[theorem]{Proposition}
\newtheorem{lemma}[theorem]{Lemma}
\newtheorem{remark}[theorem]{Remark}
\newtheorem{corollary}[theorem]{Corollary}
\numberwithin{equation}{section}
\begin{document}

 	\title{Standing waves for nonlinear Hartree type equations: existence and qualitative properties
		}
	\author{
		Eduardo de Souza B\"oer \ \ and \ \  Ederson Moreira dos Santos\thanks{Corresponding author}}
			\date{}	
	\maketitle
	
\noindent \textbf{Abstract:} We consider systems of the form
$$ \left\{ \begin{array}{l}
-\Delta u + u = \dfrac{2p}{p+q}(I_\alpha \ast |v|^{q})|u|^{p-2}u \ \ \textrm{ in } \mathbb{R}^N, \vspace{10pt} \\
-\Delta v + v = \dfrac{2q}{p+q}(I_\alpha \ast |u|^{p})|v|^{q-2}v \ \ \textrm{ in } \mathbb{R}^N, 
\end{array} \right. $$
for $\alpha\in (0, N)$, $\max\left\{\frac{2\alpha}{N}, 1\right\} < p, q < 2^*$ and $\frac{2(N+\alpha)}{N} < p+ q < 2^{*}_{\alpha}$, where $I_\alpha$ denotes the Riesz potential,
\[
2^* = \left\{ \begin{array}{l}\frac{2N}{N-2} \ \ \text{for} \ \ N\geq 3,\\ +\infty \ \ \text{for} \ \ N =1,2, \end{array}\right. \quad \text{and} \quad 2^*_{\alpha} = \left\{ \begin{array}{l}\frac{2(N+\alpha)}{N-2} \ \ \text{for} \ \ N\geq 3,\\ +\infty \ \ \text{for} \ \ N =1,2. \end{array} \right.
\]
This type of systems arises in the study of standing wave solutions for a certain approximation of the Hartree theory for a two-component attractive interaction. We prove existence and some qualitative \linebreak properties for ground state solutions, such as definite sign for each component, radial symmetry and sharp asymptotic decay at infinity, and a regularity/integrability result for the (weak) solutions. Moreover, we show that the straight lines $p+q=\frac{2(N+\alpha)}{N}$ and $ p+ q = 2^{*}_{\alpha}$ are critical for the existence of solutions.

\vspace{0.5 cm}

\noindent
{\it \small Mathematics Subject Classification:} {\small 35B06, 35B40, 35J47, 35J50, 35J60, 35Q40, 35Q92. }\\
		{\it \small Key words}. {\small  Hartree type equations, Choquard type systems, Existence of solutions, Regularity of solutions, Radial symmetry, Asymptotic decay.}
			
\section{Introduction}			
Consider the two-components Hartree type equations
\begin{equation}\label{hartree}
\begin{cases}
-i \frac{\partial}{\partial t} \varphi =\Delta \varphi + \dfrac{2p}{p+q}\left(I_{\alpha} \ast\left|\psi\right|^q\right)\left|\varphi\right|^{p-2} \varphi \quad \text{in} \ \ (0, +\infty) \times \mathbb{R}^N, \vspace{10pt}\\
-i \frac{\partial}{\partial t} \psi =\Delta \psi + \dfrac{2q}{p+q}\left(I_{\alpha} \ast\left|\varphi\right|^p\right)\left|\psi\right|^{q-2} \psi \quad \text{in} \ \ (0, +\infty) \times \mathbb{R}^N, 
\end{cases}
\end{equation}
where $\varphi,\psi: \mathbb{R}^N \times \mathbb{R} \rightarrow \mathbb{C}$, for $\alpha\in (0, N)$,  $I_\alpha:\mathbb{R}^N \rightarrow \mathbb{R}$ is the Riesz potential, also called Coulumb type potential, defined by
$$
I_\alpha(x)=\dfrac{\Gamma\left(\frac{N-\alpha}{2}\right)}{\Gamma\left(\frac{\alpha}{2}\right)\pi^{\frac{N}{2}}2^\alpha}\,\dfrac{1}{|x|^{N-\alpha}},
$$
$\Gamma$ stands for the Gamma function, and $\ast$ denotes the convolution. 

As presented in \cite{Lieb-Simon, Lions-Hartree, BBL}, Hartree type equations with the Coulomb potential $I_{N-1}$ are used as models to describe the interaction between electrons in the Hartree-Fock theory in quantum chemistry. In addition, systems of the form considered in this paper are used for modeling many physical situations, and for a discussion of how the Hartree type equations appears as a mean-field limit for many-particles boson systems, we refer to \cite{seminaire, lewin}. 

In this paper we study the existence of standing wave solutions $(\varphi, \psi) = e^{it}(u,v)$, with $u,v: \mathbb{R}^N \to \mathbb{R}$, of \eqref{hartree}. This yields that $(u,v)$ must be solution of the system
\begin{equation}\label{P}
\left\{ \begin{array}{ll}
-\Delta u + u = \dfrac{2p}{p+q}(I_\alpha \ast |v|^{q})|u|^{p-2}u \ \ \textrm{ in } \mathbb{R}^N, \vspace{10pt}\\
-\Delta v + v = \dfrac{2q}{p+q}(I_\alpha \ast |u|^{p})|v|^{q-2}v \ \ \textrm{ in } \mathbb{R}^N,
\end{array} \right.
\end{equation}			
also called Choquard type system.

In the case with $p=q$ and $u=v$, system \eqref{P} becomes the Choquard equation
\begin{equation}\label{choquardeq}
-\Delta u + u = (I_\alpha \ast |u|^{p})|u|^{p-2}u \ \ \text{ in } \ \ \mathbb{R}^N.
\end{equation}
This equation arises in physics, with $N=3$, $p=2$ and $\alpha=2$, as a modelling problem for the quantum mechanics of a polaron at rest, first studied by Fröhlich and Pekar in 1954; see \cite{Froehlich1954, Froehlich1939, Wilson1955}. Then, in 1976, Choquard introduced this equation in the study of an electron trapped in its hole \cite{Lieb1977}. Moreover, in 1996, Penrose has derived this equation while discussing about the self gravitational collapse of a quantum-mechanical system in  \cite{Penrose1996}. After these pioneering works, Choquard equations have been extensively studied and the literature is quite vast. We cite the works \cite{Moroz2013, Moroz2017} and the references therein for a more detailed overview, being the former influential to this work. Finally, Choquard equations has a natural extension for $N=2$, here with a logarithmic kernel, derived from the fundamental solution of the Laplacian operator, for which we refer to \cite{Cingolani2016}.

Much less results are known regarding the study of standing wave solutions of \eqref{hartree}. We cite \cite{Bhattarai}, where existence and stability of standing wave solutions of \eqref{hartree} are studied in the case with $p=q$ and $N=2,3$, and \cite{Wang-Shi} for the study of standing waves for a related system. We also mention that Zhang and Chen \cite{Chen2023a} studied
\begin{equation}\label{eq26}
\left\{ \begin{array}{ll}
-\Delta u + A(x)u = \dfrac{2p}{p+q}(I_\alpha \ast |v|^{q})|u|^{p-2}u \ \ \textrm{ in } \mathbb{R}^N, \vspace{10pt}\\
-\Delta v + B(x)v = \dfrac{2q}{p+q}(I_\alpha \ast |u|^{p})|v|^{q-2}v \ \ \textrm{ in } \mathbb{R}^N, 
\end{array} \right.
\end{equation}
with $\frac{N+\alpha}{N}<p, q < c_{\alpha}$, for $c_{\alpha}=\frac{N+\alpha}{N-2}$ if $N\geq 3$ and $c_{\alpha}= \infty$ if $N=1,2$, and proved the existence of positive ground state solutions for \eqref{eq26}. They considered the positive potentials $A$ and $B$ as bounded and periodic (in particular, $A = B = 1$). However, up to our knowledge, no qualitative properties of these solutions (such as regularity/integrability, symmetry, asymptotic decay) are available in the literature.

In this paper we present a more complete picture about the existence of solutions and ground state solutions of \eqref{P}. Namely, we expand the region on the $(p, q)$-plan where positive solutions to \eqref{P} exist and we prove: the definite sign for each component $u$ and $v$, radial symmetry and sharp asymptotic decay at infinity for ground state solutions of \eqref{P}; a regularity result for the (weak) solutions of \eqref{P}; we prove that the straight lines $p+q=\frac{2(N+\alpha)}{N}$ and $ p+ q = 2^{*}_{\alpha}$ are critical for the existence of solutions, in the sense that if $(p,q)$ is on or below the first, or if it is on or above the second, then no nontrivial solution of \eqref{P} exists. We also prove some general results that can be useful in other frameworks. For example, the Brezis-Lieb type result for double sequences as given by Proposition \ref{l14}.

We start establishing our hypotheses on $p, q$ for the existence of solutions for \eqref{P}. Our bounds for $p$ and $q$ take into consideration the both versions of Hardy-Littlewood-Sobolev theorems (Propositions \ref{t1} and \ref{t2}) and Sobolev embeddings, namely
\begin{equation}\label{H1}
\tag{H1}
\max\left\{\dfrac{2\alpha}{N}, 1\right\} < p, q < 2^{*} \mbox{ \ \ and \ \ } \dfrac{2(N+\alpha)}{N}< p+ q < 2^{*}_{\alpha},
\end{equation}
where
\[
2^* = \left\{ \begin{array}{l}\frac{2N}{N-2} \ \ \text{for} \ \ N\geq 3,\\ +\infty \ \ \text{for} \ \ N =1,2, \end{array}\right. \quad \text{and} \quad 2^*_{\alpha} = \left\{ \begin{array}{l}\frac{2(N+\alpha)}{N-2} \ \ \text{for} \ \ N\geq 3,\\ +\infty \ \ \text{for} \ \ N =1,2. \end{array} \right.
\]
At this point we stress that in the case with $p=q$, $u=v$, where system \eqref{P} becomes the Choquard equation, condition \eqref{H1} becomes 
\[
\frac{N-2}{N+\alpha} < \frac{1}{p} < \frac{N}{N+\alpha},
\] 
which appears in \cite{Moroz2013} for the existence of solutions to \eqref{choquardeq}. 

We state next our main results, and we refer to Definition \ref{def-sol} ahead for the precise definition of (weak) solution of \eqref{P}.

\begin{theorem}\label{t3}
Let $0<\alpha< N$, $N\geq 1$, and $p, q$ be as in \eqref{H1}. Then system \eqref{P} has a ground state solution $(u, v)$ such that $u>0$ and $v>0$ in $\mathbb{R}^N$. Moreover, if $(u, v)$ is a ground state solution of \eqref{P}, then $|u| >0$ and $|v|>0$ in $\mathbb{R}^N$.
\end{theorem}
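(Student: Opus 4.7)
On $H := H^1(\mathbb{R}^N) \times H^1(\mathbb{R}^N)$ I consider
\[
J(u,v) := \tfrac{1}{2}\|(u,v)\|^2 - \tfrac{2}{p+q}\,F(u,v), \qquad F(u,v) := \int_{\mathbb{R}^N}(I_\alpha \ast |v|^q)|u|^p\,dx.
\]
By \eqref{H1}, the Hardy--Littlewood--Sobolev inequality (Propositions \ref{t1} and \ref{t2}) together with the Sobolev embedding guarantees $J \in C^1(H,\mathbb{R})$, whose critical points are precisely the weak solutions of \eqref{P}. The plan is to minimize $J$ over the Nehari manifold $\mathcal{N} := \{(u,v) \in H \setminus \{0\}: \langle J'(u,v),(u,v)\rangle = 0\}$. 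Since $p+q > 2$, for each $(u,v)$ with $F(u,v)>0$ the fibering $t \mapsto J(tu,tv)$ admits a unique positive maximizer $t_{u,v}$, located on $\mathcal{N}$, and a direct HLS estimate yields a uniform positive lower bound for $\|(u,v)\|$ on $\mathcal{N}$. Hence $c := \inf_\mathcal{N} J = \left(\tfrac{1}{2}-\tfrac{1}{p+q}\right)\inf_\mathcal{N}\|(u,v)\|^2 > 0$, and $\mathcal{N}$ is a natural constraint.

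\textbf{Existence of a positive ground state.} Take a minimizing sequence $(u_n,v_n) \subset \mathcal{N}$. The Riesz rearrangement inequality gives $F(u_n^*,v_n^*) \geq F(u_n,v_n)$ while P\'olya--Szeg\H{o} gives $\|(u_n^*,v_n^*)\| \leq \|(u_n,v_n)\|$; hence the Nehari rescaling factor $t_n$ of $(u_n^*,v_n^*)$ satisfies $t_n \leq 1$ and $J(t_n(u_n^*,v_n^*)) \leq J(u_n,v_n)$. I may therefore assume that $u_n, v_n \geq 0$ are radial and nonincreasing. The sequence is bounded in $H$, so $(u_n,v_n) \rightharpoonup (u_0,v_0)$ weakly, up to a subsequence. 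The compact embedding $H^1_{\mathrm{rad}}(\mathbb{R}^N) \hookrightarrow L^s(\mathbb{R}^N)$ for $s \in (2,2^*)$, combined with HLS applied with exponents $r,s$ such that $pr, qs \in (2,2^*)$ (which \eqref{H1} is precisely designed to allow), gives $F(u_n,v_n) \to F(u_0,v_0)$. The lower bound $F(u_n,v_n) \geq c_0 > 0$ forces $(u_0,v_0) \neq 0$; weak lower semicontinuity of the norm together with the Nehari identity $\|(u_n,v_n)\|^2 = 2F(u_n,v_n)$ then yields $(u_0,v_0) \in \mathcal{N}$ and $J(u_0,v_0) = c$ (any strict inequality would produce, via the fibering, a point in $\mathcal{N}$ with energy below $c$). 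Since $\mathcal{N}$ is a natural constraint, $(u_0,v_0)$ is a critical point of $J$, hence a weak solution of \eqref{P} with nonnegative components. The regularity result established later in the paper and the strong maximum principle applied to each equation then give $u_0, v_0 > 0$ in $\mathbb{R}^N$.

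\textbf{Positivity of every ground state.} For an arbitrary ground state $(u,v)$, Stampacchia's theorem ($\nabla u = 0$ a.e.\ on $\{u=0\}$) yields $\|\nabla |u|\|_2 = \|\nabla u\|_2$ and likewise for $v$, so $\|(|u|,|v|)\|^2 = \|(u,v)\|^2$; on the other hand $F(|u|,|v|) = F(u,v)$ trivially. Therefore $(|u|,|v|) \in \mathcal{N}$ and $J(|u|,|v|) = c$, so $(|u|,|v|)$ is itself a ground state. By the natural constraint property, $(|u|,|v|)$ is a critical point of $J$, hence a weak solution of \eqref{P} with nonnegative components, and the same regularity plus strong maximum principle argument delivers $|u|, |v| > 0$ in $\mathbb{R}^N$.

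\textbf{Main obstacle.} The principal difficulty is the loss of compactness from translation invariance on $\mathbb{R}^N$ combined with the risk that one of the two components vanishes weakly in the limit, which would annihilate the coupling $F$ and break the Nehari identity. Schwarz symmetrization addresses both simultaneously, by restricting the minimizing sequence to the radial subspace where Rellich--Kondrachov is compact for $s \in (2,2^*)$, and by not decreasing $F$, so that the uniform positivity of $F$ along $\mathcal{N}$ is inherited by the weak limit.
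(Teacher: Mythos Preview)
Your argument is correct and follows a route that differs from the paper's at the key compactness step. Both proofs set up the Nehari manifold, replace the minimizing sequence by its Schwarz symmetrization, and finish with a fibering comparison plus a maximum-principle argument for positivity. The divergence is in how the nonlocal term is passed to the limit. You invoke strong $L^s$-convergence of the symmetrized sequence (for $s\in(2,2^*)$) and then the continuity of the HLS bilinear form to obtain $F(u_n,v_n)\to F(u_0,v_0)$ directly. The paper instead proves only that the weak limits are nonzero via a Lions-type concentration estimate on $B_1$, and then establishes a Brezis--Lieb lemma for the convolution coupling (Proposition~\ref{l14}) to show $\int(I_\alpha\ast|u|^p)|v|^q=\liminf\int(I_\alpha\ast|u_n|^p)|v_n|^q$. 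Your route is shorter and avoids Proposition~\ref{l14} altogether; the paper's route isolates that proposition as an independent tool that does not rely on any radial structure and could be reused in settings where symmetrization is unavailable.

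One point needs tightening: you cite ``the compact embedding $H^1_{\mathrm{rad}}(\mathbb{R}^N)\hookrightarrow L^s(\mathbb{R}^N)$ for $s\in(2,2^*)$,'' but the Strauss compactness lemma requires $N\geq 2$ and fails for $N=1$ on the full space of even functions. Your sequence, however, consists of Schwarz rearrangements and is therefore radially \emph{nonincreasing}; for such functions the uniform pointwise bound $w(x)\leq C|x|^{-N/s}\|w\|_s$ gives tightness in $L^s$ for every $s>2$ and all $N\geq 1$, so the compactness you need does hold---you should just attribute it to the monotone radial structure rather than to $H^1_{\mathrm{rad}}$ alone. With that correction the proof stands for all $N\geq 1$. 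Finally, note that the paper obtains strict positivity by quoting a weak maximum principle (Van Schaftingen--Willem) directly for $H^1$ supersolutions of $-\Delta w + w\geq 0$, so invoking the full regularity theorem is permissible but not necessary.
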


\begin{remark} If $(u,v)$ is a solution of \eqref{P}, then $(u,-v)$, $(-u,v)$ and $(-u, -v)$ are also solutions of \eqref{P}. In particular, under \eqref{H1}, system \eqref{P} admits ground state solutions where one of the components is positive and the other is negative.
\end{remark}

At Figure \ref{pic3} we compare our results regarding existence (Theorem \ref{t3}) with those available in the literature, in \cite[Corollary 1.3]{Chen2023a}.
\begin{figure}[!h]
\begin{center}
 \includegraphics[width=7cm]{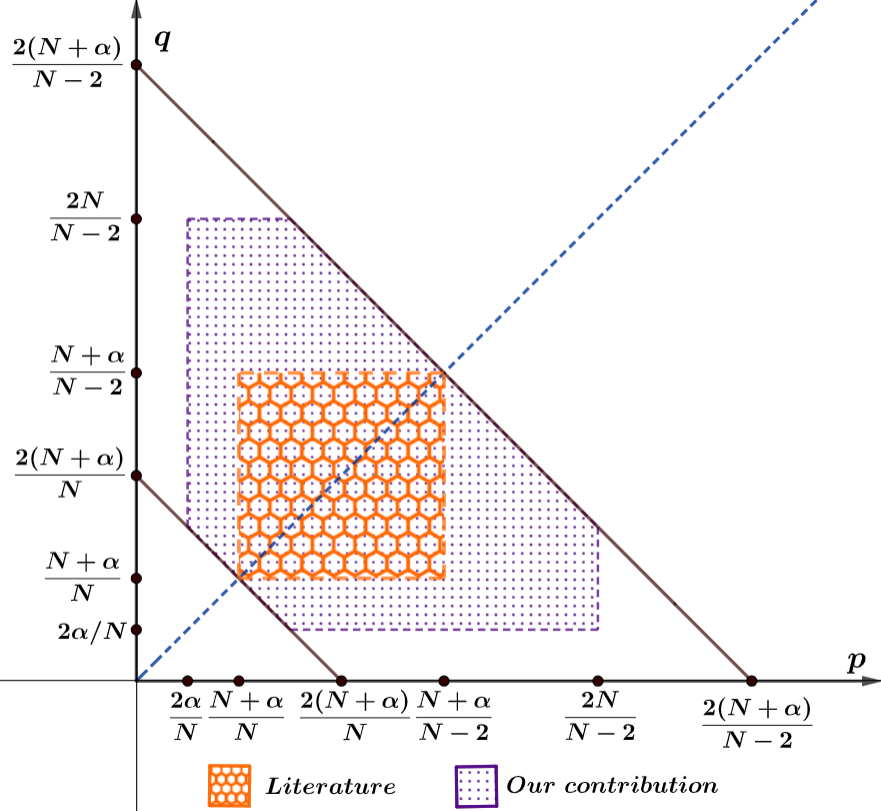}   
\end{center}
\caption{Region on the $(p,q)$-plane corresponding to our existence results (Theorem \ref{t3}) in comparison to those in the literature. Here $N=3$ and $\alpha=1$.}\label{pic3}
\end{figure}

Using an approach based on polarization, we prove that any ground state solution of \eqref{P}, with positive components, is radially symmetric and radially decreasing with respect to a common center of symmetry.

\begin{theorem}\label{t5}
Let $0<\alpha< N$, $N\geq 1$, and $p, q$ be as in \eqref{H1}. Let $(u, v)$ be a ground state solution for \eqref{P} such that $u>0$ and $v>0$ in $\mathbb{R}^N$. Then there exists $x_0 \in \mathbb{R}^N$ such that $u$ and $v$ are both radially symmetric and radially decreasing with respect to $x_0$.
\end{theorem}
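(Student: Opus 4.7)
My plan is to deploy a polarization argument. For any closed affine half-space $H \subset \mathbb{R}^N$, let $\sigma_H$ denote reflection across $\partial H$, and for $w \geq 0$ define its polarization $w^H$ by $w^H(x) = \max\{w(x), w(\sigma_H(x))\}$ on $H$ and $w^H(x) = \min\{w(x), w(\sigma_H(x))\}$ on $\mathbb{R}^N \setminus H$. I shall use the following standard facts: (i) if $w \in H^1(\mathbb{R}^N)$ then $w^H \in H^1(\mathbb{R}^N)$ with $\|\nabla w^H\|_2 = \|\nabla w\|_2$ and $\|w^H\|_r = \|w\|_r$ for every $r \in [1,\infty]$; and (ii) the Riesz-type polarization inequality
\[
\int_{\mathbb{R}^N}\!\!\int_{\mathbb{R}^N} \frac{u(x)^p \, v(y)^q}{|x-y|^{N-\alpha}} \, dx\, dy \;\leq\; \int_{\mathbb{R}^N}\!\!\int_{\mathbb{R}^N} \frac{u^H(x)^p \, v^H(y)^q}{|x-y|^{N-\alpha}} \, dx\, dy,
\]
with equality if and only if either $(u,v) = (u^H, v^H)$ a.e.\ or $(u,v) = (u^H \circ \sigma_H, v^H \circ \sigma_H)$ a.e.

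Given a ground state $(u,v)$ with $u,v > 0$, I would replace it by $(u^H, v^H)$ and observe that the kinetic and mass terms of the action functional associated to \eqref{P} are unchanged while the nonlocal term is non-decreasing; hence the action cannot increase and the natural Nehari-type constraint used to produce the ground state in Theorem \ref{t3} is preserved (this preservation is a routine verification using that polarization does not change the $L^r$ norms and only increases the nonlocal coupling). The minimality of $(u,v)$ then forces equality in the polarization inequality for the nonlocal term, and the equality characterization above yields that, for every such half-space $H$, either $(u,v) \equiv (u^H, v^H)$ or $(u,v) \equiv (u^H \circ \sigma_H, v^H \circ \sigma_H)$. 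The crucial consequence is a synchronized polarization behavior of both components: it is not possible that $u$ be fixed by polarization while $v$ be reflected, or vice-versa.

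Finally, this synchronization allows me to invoke the Van Schaftingen--Willem characterization of radial symmetry via polarization: a positive continuous function $w$ which, for every half-space $H$, satisfies $w = w^H$ for an appropriate orientation of $H$ must be radially symmetric and radially decreasing about some point. Applying this to $u$ and $v$ separately would yield centers $x_u$ and $x_v$; the synchronized orientation choice forces $x_u = x_v =: x_0$. The continuity and strict positivity of $u$ and $v$ needed in this last step follow from a regularity/maximum principle argument available in the paper. I expect the main obstacles to be (a) the precise analysis of the equality case in the Riesz-type polarization inequality for two \emph{different} functions, which requires exploiting the strict monotonicity of the kernel $|x|^{\alpha-N}$ together with the strict positivity of $u$ and $v$ to rule out mixed configurations; and (b) transferring this synchronization into a single common center of symmetry rather than two a priori unrelated centers for $u$ and $v$.
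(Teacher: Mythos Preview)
Your proposal is correct and follows essentially the same polarization strategy as the paper: Lemma~\ref{l17} establishes that polarizing a positive ground state yields another ground state (via the Nehari rescaling, since the constraint is not literally preserved but the projected energy does not increase), Lemma~\ref{l16} gives the equality case you describe for the two-function Riesz inequality, Lemma~\ref{l15} is the Van Schaftingen--Willem characterization, and Corollary~\ref{c5} extracts the common center from the synchronized polarization behavior. The only imprecision is your phrase ``the Nehari-type constraint is preserved'': strictly speaking $(u^H,v^H)$ need not lie on $\mathcal{N}$, and the paper handles this by rescaling (Lemma~\ref{l3}), but this is exactly the routine step you allude to.
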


Now we turn to a result regarding the regularity/integrability of solutions of \eqref{P}. In comparison with the scalar case \cite[Proposition 4.1]{Moroz2013}, see also \cite{Ma-Zao-2010} and \cite{Cingolani-Clapp-Secchi}, the bootstrap arguments are more technical. Besides depending on the position of the point $(p,q)$ in the plane, in the iterating process, the regularity of $u$ interferes on the regularity of $v$ and vice versa. Apart from having a long proof, this regularity result is an important ingredient for the proof of sharp asymptotic decay for ground state solutions.

\begin{theorem}\label{t10novo}
Let $0<\alpha< N$, $N\geq 1$, $p, q$ as in \eqref{H1}, and $(u, v)\in E$ be a solution of \eqref{P}. 
\begin{itemize}
    \item[(a)] If
    $$
    q \geq \dfrac{\alpha}{N}p + 1 \mbox{ \ \ \ and \ \ \ } q \leq \dfrac{N}{\alpha}p - \dfrac{N}{\alpha},
    $$
    then $u\in W^{2, r}(\mathbb{R}^N)$ and $v\in W^{2, h}(\mathbb{R}^N)$, for all $1< r, h <+\infty$.

    \item[(b)] If
    $$
    p < \dfrac{N}{N-\alpha} \mbox{\ \ \ and \ \ \ } q < \dfrac{N}{N-\alpha},
    $$
    then $u\in W^{2, r}(\mathbb{R}^N)$ and $v\in W^{2, h}(\mathbb{R}^N)$, for all $1< r^\ast < r <+\infty$ and $1< h^\ast< h< + \infty$, respectively, where
    \begin{align*}
    &r^\ast = \dfrac{pN}{p(2N-\alpha) -N} \mbox{ \ \ \ and \ \ \ } h^\ast = \dfrac{qN}{q(2N-\alpha) -N}, \mbox{ \ \ \ if \ \ \ } p,q\geq\frac{2N}{2N-\alpha},\vspace{10pt}\\
    &r^\ast = \dfrac{(2-p)N}{N- \alpha} \mbox{ \ \ \ and \ \ \ } h^\ast = \dfrac{(2-p)qN}{(2q+p-2)N - 2q\alpha}, \mbox{ \ \ \ if \ \ \ } p<\frac{2N}{2N-\alpha},\vspace{10pt}\\
    &r^\ast = \dfrac{(2-q)pN}{(2p+q-2)N - 2p\alpha} \mbox{ \ \ \ and \ \ \ } h^\ast = \dfrac{(2-q)N}{N- \alpha} , \mbox{ \ \ \ if \ \ \ } q<\frac{2N}{2N-\alpha}.
    \end{align*}
    \item[(c)] If
     $$
    q \geq \dfrac{N}{N-\alpha} \mbox{ \ \ \ and \ \ \ } q > \dfrac{N}{\alpha}p - \dfrac{N}{\alpha},
    $$
    then $u\in W^{2, r}(\mathbb{R}^N)$ and $v\in W^{2, h}(\mathbb{R}^N)$, for all $1< r^{\ast \ast} < r <+\infty$ and $1\leq h^{\ast \ast}< h<  + \infty$, respectively, where
     \begin{align*}
    &r^{\ast \ast} = \dfrac{pqN}{(pq+p-1)N-q\alpha} \mbox{ \ \ \ and \ \ \ } h^{\ast \ast} = 1, \mbox{ \ \ \ if \ \ \ } \dfrac{2-p}{N-\alpha} \leq \dfrac{pq}{N +\alpha q},\vspace{10pt}\\
    &r^{\ast \ast} = \dfrac{(2-p)N}{N- \alpha} \mbox{ \ \ \ and \ \ \ } h^{\ast \ast} = \dfrac{(2-p)qN}{(2q+p-2)N - 2q\alpha}, \mbox{ \ \ \ if \ \ \ } \dfrac{2-p}{N-\alpha} > \dfrac{pq}{N +\alpha q}.
    \end{align*}
    \item[(d)] If
     $$
    q < \dfrac{\alpha}{N}p + 1 \mbox{ \ \ \ and \ \ \ } p \geq \dfrac{N}{N-\alpha},
    $$
    then $u\in W^{2, r}(\mathbb{R}^N)$ and $v\in W^{2, h}(\mathbb{R}^N)$, for all $1\leq r^{\ast \ast\ast} < r <+\infty$ and $1<h^{\ast \ast \ast} < h< + \infty$, respectively, where
    \begin{align*}
    &r^{\ast \ast \ast} =1 \mbox{ \ \ \ and \ \ \ } h^{\ast \ast \ast} = \dfrac{pqN}{(pq+q-1)N-p\alpha}, \mbox{ \ \ \ if \ \ \ } \dfrac{2-q}{N-\alpha} < \dfrac{pq}{N +\alpha p},\vspace{10pt}\\
    &r^{\ast \ast \ast} = \dfrac{(2-q)pN}{(2p+q-2)N - 2p\alpha}  \mbox{ \ \ \ and \ \ \ } h^{\ast \ast \ast} = \dfrac{(2-q)N}{N- \alpha}, \mbox{ \ \ \ if \ \ \ } \dfrac{2-q}{N-\alpha} \geq \dfrac{pq}{N +\alpha p}.
    \end{align*}
\end{itemize}
In any case, $u, v\in L^{\infty}(\mathbb{R}^N) \cap C^2(\mathbb{R}^N)$, $u \in C^{\infty}(\mathbb{R}^N \setminus u^{-1}(\{0\}))$, $v \in C^{\infty}(\mathbb{R}^N \setminus v^{-1}(\{0\}))$ and satisfy \eqref{P} pointwise in the classical sense. 
\end{theorem}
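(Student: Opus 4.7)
The plan is to execute a coupled Moser-type bootstrap, in the spirit of \cite[Proposition 4.1]{Moroz2013} for the scalar Choquard equation, but carefully tracking how the two equations feed integrability into each other through the Riesz convolutions. I rewrite the system as
\begin{equation*}
(-\Delta + 1) u = K_v \, |u|^{p-2}u, \qquad (-\Delta + 1) v = K_u \, |v|^{q-2}v,
\end{equation*}
with $K_v := \tfrac{2p}{p+q}\, I_\alpha \ast |v|^q$ and $K_u := \tfrac{2q}{p+q}\, I_\alpha \ast |u|^p$, and use, in rotation, three ingredients: (i) Hardy--Littlewood--Sobolev (Propositions \ref{t1} and \ref{t2}) to pass from $|v|^q$, $|u|^p$ in Lebesgue spaces to $K_v, K_u$ in larger Lebesgue spaces; (ii) Hölder's inequality to pair $K_v$ with $|u|^{p-1}$, placing the right-hand side in some $L^t$; (iii) the $L^t$-theory for $-\Delta + 1$ on $\mathbb{R}^N$ with $1<t<\infty$ (Bessel potential estimates) to gain two derivatives, then Sobolev embedding $W^{2,t}\hookrightarrow L^{t^\sharp}$ to restart the iteration with a higher integrability exponent.

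The key is to encode the iteration in the $(1/r, 1/h)$-plane of reciprocal Lebesgue exponents of $u$ and $v$. One bootstrap cycle yields an explicit affine update $(1/r_n, 1/h_n) \mapsto (1/r_{n+1}, 1/h_{n+1})$ whose coefficients are determined by $p, q, \alpha, N$. Starting from $(u,v)\in E$, the Sobolev embedding $H^1\hookrightarrow L^s$ provides initial reciprocals, and I iterate. The four cases (a)--(d) correspond to qualitatively distinct behaviors of this iteration. In case (a), both inequalities on $(p,q)$ ensure the iteration can be pushed all the way to $(1/r, 1/h) = (0,0)$, giving $u,v \in L^r$ for every $1<r<\infty$. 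In case (b) the small size of $p$ and $q$ places the Hardy--Littlewood--Sobolev step as the bottleneck and one must split according to whether $p, q \geq 2N/(2N-\alpha)$ or not, producing the three subcases. In cases (c) and (d), the iteration stalls on one side at the endpoint $L^1$, yielding $h^{\ast\ast}=1$ or $r^{\ast\ast\ast}=1$. In every regime the thresholds $r^\ast, h^\ast, r^{\ast\ast}, h^{\ast\ast}, r^{\ast\ast\ast}, h^{\ast\ast\ast}$ are read off as the fixed (or limit) points of the affine map restricted to the admissible branch.

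Once a Lebesgue exponent $r > N/2$ has been reached in both components (which holds in every region (a)--(d) once the iteration terminates), Sobolev embedding promotes $u, v \in W^{2,r}$ to $u, v \in L^\infty(\mathbb{R}^N)$. With $u, v$ bounded, standard properties of the Riesz potential yield $K_u, K_v \in C(\mathbb{R}^N)$, so the right-hand sides of \eqref{P} are continuous and classical elliptic regularity gives $u, v \in C^2(\mathbb{R}^N)$ satisfying \eqref{P} pointwise. On the open sets $\{u\neq 0\}$ and $\{v\neq 0\}$ the map $t\mapsto |t|^{p-2}t$ is smooth, so a further bootstrap through $C^{k,\gamma}$ Schauder estimates yields $u \in C^\infty(\mathbb{R}^N\setminus u^{-1}(0))$ and analogously for $v$.

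The hard part is the bookkeeping of the coupled iteration: at each step one must choose the correct Hölder pairing and branch of Hardy--Littlewood--Sobolev, and verify that all exponents remain strictly between $1$ and $\infty$ throughout. This is precisely what carves the $(p,q)$-plane into the four regions and produces the explicit formulas for the thresholds; the endpoint values $h^{\ast\ast}=1$ in (c) and $r^{\ast\ast\ast}=1$ in (d) signal exactly where the iteration fails to be admissible across the $L^1$ boundary and must be stopped. A secondary technical point is the lack of smoothness of $|t|^{p-2}t$ at $t=0$ when $p<2$ (and similarly for $q$), which forces the smoothness conclusion to be restricted to the complement of the zero sets; Hölder's inequality applied to $|u|^{p-1}$ nevertheless remains valid since $p-1>0$ under \eqref{H1}.
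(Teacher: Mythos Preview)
Your plan is correct and follows essentially the same route as the paper: a coupled bootstrap using \eqref{HLS2} to place $I_\alpha\ast|u|^p$ and $I_\alpha\ast|v|^q$ in $L^\gamma$, H\"older to put the right-hand sides in $L^r$, Calder\'on--Zygmund for $-\Delta+1$, and Sobolev embedding to restart, with the iteration tracked in reciprocal-exponent coordinates (the paper's formulas \eqref{eq14}, \eqref{eq15}, \eqref{stoverline} are exactly your affine map).  The paper organizes the argument slightly differently---it explicitly separates a \emph{descending} process (pushing $1/r,1/h$ up toward $1$, which is where the four regions (a)--(d) and the threshold formulas arise from the constraints \eqref{precisamos} and \eqref{sermaior1}) and an \emph{ascending} process (pushing $1/r,1/h$ down toward $0$), proving a uniform gap at each step rather than invoking fixed points; it also isolates as a separate step (Step~6) that $I_\alpha\ast|u|^p, I_\alpha\ast|v|^q\in L^\infty$ once enough integrability is available, which then decouples the two equations in the final $W^{2,r}$ bootstrap (Step~7)---but the substance is the same.
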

\begin{figure}[!h]
\begin{center}
 \includegraphics[width=8cm]{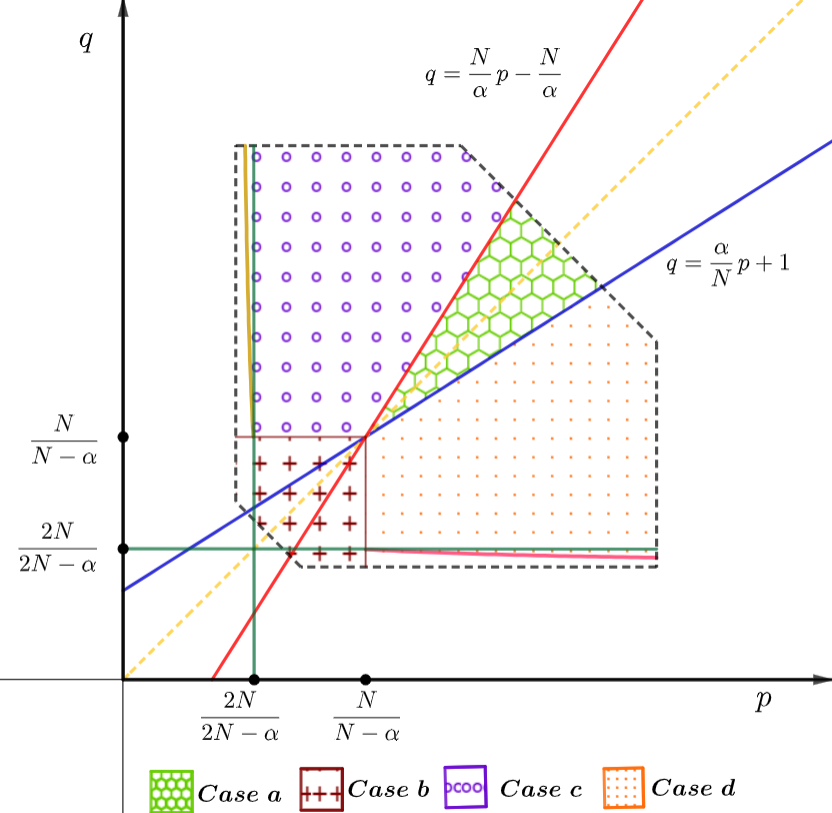}   
\end{center}
\caption{Regions on the $(p,q)$-plane corresponding to cases (a)-(d) of Theorem \ref{t10novo}. In this picture, $N=3$, $\alpha=1.9$.}\label{figreg}
\end{figure} 

\begin{remark}
At Theorem \ref{t10novo} (c), observe that the condition $\frac{2-p}{N-\alpha} > \frac{pq}{N +\alpha q}$ says that $p$ is small, and that $(p,q)$ is below a hyperbola that meets $(\frac{2N}{2N-\alpha}, \frac{N}{N-\alpha})$ at the common boundary of regions (c) and (b). Moreover, observe that $r^{\ast} = r^{\ast \ast}$ and $h^{\ast} = h^{\ast \ast}$ when $p$ is small at regions (b) and (c). A similar remark holds for the respective case involving regions (b) and (d).  
\end{remark}

Next, strongly based on Theorem \ref{t10novo}, we prove the sharp decay for ground state solutions of \eqref{P}.

\begin{theorem}\label{decaimentosol}
Let $N\geq 1$, $\alpha\in (0, N)$ and assume \eqref{H1}. In addition:
\begin{itemize}
    \item[(i)] If $\min\{p,q\} < \max\{p,q\} =2$, then also assume $\min\{p,q\} > \max\left\{ \frac{2N}{2N-\alpha}, \frac{2(\alpha+1)}{N+1}\right\}$;

\item[(ii)] If $\max\{p,q\} <2$, then also assume $\min\{p,q\} >  \frac{2N}{2N-\alpha}$.

\end{itemize}
Under these conditions, if $(u, v)$ is a ground state solution for \eqref{P} with $u>0$ and $v>0$, then:
\begin{itemize}
    \item[(a)] For $p>2$, $\displaystyle \lim\limits_{|x|\rightarrow + \infty} u(x)|x|^{\frac{N-1}{2}}e^{|x|} \in (0, +\infty)$.
    
        \item[(b)] For $q>2$, $\lim\limits_{|x|\rightarrow + \infty} v(x)|x|^{\frac{N-1}{2}}e^{|x|} \in (0, +\infty) .$

\item[(c)] For $p=2$, 
\begin{align*}
&\lim\limits_{|x|\rightarrow + \infty} u(x)|x|^{\frac{N-1}{2}}\exp \left(\displaystyle \int_{\mathcal{A}}^{|x|}\sqrt{1-\dfrac{\mathcal{A}^{N-\alpha}}{s^{N-\alpha}}} ds\right) \in (0, +\infty), \ \ \text{with} \vspace{10pt}\\
&\mathcal{A}^{N-\alpha}=\dfrac{2q}{p+q}\dfrac{\Gamma\left(\frac{N-\alpha}{2}\right)}{\Gamma\left(\frac{\alpha}{2}\right)\pi^{\frac{N}{2}}2^{\alpha}}\displaystyle \int_{\mathbb{R}^N} |v|^q. \end{align*} 

\item[(d)] For $q=2$,
\begin{align*}
&\lim\limits_{|x|\rightarrow + \infty} v(x)|x|^{\frac{N-1}{2}}\exp \left(\displaystyle \int_{\mathcal{B}}^{|x|}\sqrt{1-\dfrac{\mathcal{B}^{N-\alpha}}{s^{N-\alpha}}} ds\right) \in (0, +\infty), \ \ \text{with} \vspace{10pt}\\ &\mathcal{B}^{N-\alpha}=\dfrac{2p}{p+q}\dfrac{\Gamma\left(\frac{N-\alpha}{2}\right)}{\Gamma\left(\frac{\alpha}{2}\right)\pi^{\frac{N}{2}}2^{\alpha}}\displaystyle \int_{\mathbb{R}^N} |u|^p.\end{align*}

\item[(e)] For $p<2$
$$
\lim\limits_{|x|\rightarrow +\infty} (u(x))^{2-p}|x|^{N-\alpha}= \dfrac{2p}{p+q}\dfrac{\Gamma\left(\frac{N-\alpha}{2}\right)}{\Gamma\left(\frac{\alpha}{2}\right)\pi^{\frac{N}{2}}2^\alpha}\int_{\mathbb{R}^N} |v|^q dx.$$

\item[(f)] For $q<2$ 
$$\lim\limits_{|x|\rightarrow +\infty} (v(x))^{2-q}|x|^{N-\alpha}= \dfrac{2q}{p+q}\dfrac{\Gamma\left(\frac{N-\alpha}{2}\right)}{\Gamma\left(\frac{\alpha}{2}\right)\pi^{\frac{N}{2}}2^\alpha}\int_{\mathbb{R}^N} |u|^p dx.
$$
\end{itemize}
\end{theorem}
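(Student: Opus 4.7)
My plan is to treat the six cases by exploiting the sharp asymptotic behaviour of the Riesz convolutions and reducing the system to two radial Schr\"odinger-type ODEs, to which classical barrier and WKB techniques apply. By Theorem~\ref{t5}, I may assume $u,v$ are positive, radial and radially decreasing about the origin, and by Theorem~\ref{t10novo} they belong to $C^2(\mathbb{R}^N)\cap L^{\infty}(\mathbb{R}^N)$, satisfy \eqref{P} pointwise, and tend to $0$ at infinity. The technical role of assumptions (i),(ii) is to guarantee $|u|^{p},|v|^{q}\in L^{1}(\mathbb{R}^N)$ even in the ``small exponent'' regimes; this integrability, combined with dominated convergence, produces the expansion
\begin{equation*}
(I_{\alpha}\ast|v|^{q})(x)=\frac{A(\alpha)}{|x|^{N-\alpha}}\int_{\mathbb{R}^N}|v|^{q}+o(|x|^{-(N-\alpha)}),\qquad |x|\to\infty,
\end{equation*}
with $A(\alpha)=\Gamma(\tfrac{N-\alpha}{2})/(\Gamma(\tfrac{\alpha}{2})\pi^{N/2}2^{\alpha})$, and an analogous expansion for $I_{\alpha}\ast|u|^{p}$. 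Multiplied by the equation factor $\tfrac{2p}{p+q}$ (resp.\ $\tfrac{2q}{p+q}$), these reproduce exactly the constants $\mathcal{B}^{N-\alpha}$ and $\mathcal{A}^{N-\alpha}$ from the statement.

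For cases (a)--(d) (in which $p\geq 2$, respectively $q\geq 2$), I rewrite the first equation as $-\Delta u+(1-W_{u}(x))u=0$ with $W_{u}(x)=\tfrac{2p}{p+q}(I_{\alpha}\ast|v|^{q})(x)\,u(x)^{p-2}$. The Liouville substitution $u(r)=r^{-(N-1)/2}\psi(r)$ reduces the radial ODE to $-\psi''+(1-W_{u}(r)+O(r^{-2}))\psi=0$, whose decaying solutions are analysed via a Liouville--Green/WKB expansion along the lines of the scalar strategy in \cite{Moroz2013}. When $p>2$, $W_{u}(r)\to 0$ very fast (since $u^{p-2}\to 0$), so the decay matches the pure Bessel rate $r^{-(N-1)/2}e^{-r}$ giving (a); when $p=2$, $W_{u}(r)\sim \mathcal{A}^{N-\alpha}r^{-(N-\alpha)}$, and the WKB phase acquires the correction $\int_{\mathcal{A}}^{r}\sqrt{1-\mathcal{A}^{N-\alpha}s^{\alpha-N}}\,ds$ appearing in (c). The precise limit in $(0,+\infty)$ is extracted by representing $u=G_{1}\ast(W_{u}u)$ with $G_{1}$ the Bessel Green function of $-\Delta+1$, and passing to the limit as $|x|\to\infty$. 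Cases (b) and (d) for $v$ are symmetric.

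For cases (e),(f) (where $p<2$, respectively $q<2$), the nonlinear term on the right-hand side dominates. Given $\varepsilon>0$, I plan to construct radial barriers $u_{\pm}(x)=(L\pm\varepsilon)^{1/(2-p)}|x|^{-(N-\alpha)/(2-p)}$ on $\mathbb{R}^N\setminus B_{R_{\varepsilon}}$, where $L=\tfrac{2p}{p+q}A(\alpha)\int|v|^{q}$ is the target constant. A direct computation gives $|\Delta u_{\pm}|=O(|x|^{-(N-\alpha)/(2-p)-2})$, negligible compared with $u_{\pm}$ at infinity; combining this with the convolution asymptotics, the algebraic identity $L\cdot(L\pm\varepsilon)^{(p-1)/(2-p)}\lessgtr(L\pm\varepsilon)^{1/(2-p)}$ verifies that $u_{-}$ is a subsolution and $u_{+}$ a supersolution of the $u$-equation outside a sufficiently large ball. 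Applying the maximum principle on annuli $R_{\varepsilon}\leq|x|\leq R'$, with $u(x)\to 0$ controlling boundary values, yields
\begin{equation*}
(L-\varepsilon)^{1/(2-p)}\leq\liminf_{|x|\to\infty}u(x)|x|^{(N-\alpha)/(2-p)}\leq\limsup_{|x|\to\infty}u(x)|x|^{(N-\alpha)/(2-p)}\leq(L+\varepsilon)^{1/(2-p)},
\end{equation*}
and sending $\varepsilon\to 0$ gives (e); a symmetric argument handles (f).

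The main obstacle will be the ``mixed'' regimes in which $p$ and $q$ lie on opposite sides of $2$: the coefficient of each equation is governed by the decay of the other component, so the arguments must be chained in the right order. I plan to first establish the faster (exponential or modified-exponential) decay for the component with exponent $\geq 2$, and then feed that into the polynomial-decay argument for the component with exponent $<2$. Checking that assumptions (i),(ii) are exactly what is needed to ensure $|u|^{p},|v|^{q}\in L^{1}(\mathbb{R}^N)$ and thereby justify the leading-order convolution asymptotics uniformly across all these subcases is the key bookkeeping step that glues the proof together.
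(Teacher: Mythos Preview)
Your overall architecture is close to the paper's, but there are two genuine gaps.

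First, for (e)--(f) your direct power-law barrier argument is incomplete. The equation $-\Delta u + u = c(x)u^{p-1}$ with $p<2$ is \emph{sublinear}, and the naive comparison ``$u_{+}$ supersolution, $u\le u_{+}$ on the boundary $\Rightarrow u\le u_{+}$'' does not follow from the usual maximum-principle computation (at a putative interior max of $u-u_{+}$ the sign of $u^{p-1}-u_{+}^{p-1}$ goes the wrong way). Moreover, on the inner sphere $\partial B_{R_{\varepsilon}}$ you have no a~priori reason why $u$ lies between your fixed barriers $(L\pm\varepsilon)^{1/(2-p)}|x|^{-(N-\alpha)/(2-p)}$; ``$u(x)\to 0$'' controls only the outer boundary. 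The paper avoids both issues by the substitution $w=u^{2-p}$, which converts the problem into a \emph{linear} one, $-\Delta w+(2-p)w\ge (2-p)\tfrac{2p}{p+q}(I_{\alpha}\ast|v|^{q})$, and then compares with solutions of linear inhomogeneous problems on $\mathbb{R}^{N}\setminus B_{1}$ matching $w$ on $\partial B_{1}$ (Lemma~\ref{lpmenor2}); the upper bound uses Young's inequality on the right-hand side rather than a nonlinear supersolution.

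Second, you misidentify the role of the threshold $\tfrac{2(\alpha+1)}{N+1}$ in assumption~(i). It is \emph{not} about $|v|^{q}\in L^{1}$ (that is the role of $\tfrac{2N}{2N-\alpha}$), but about obtaining a \emph{quantitative} remainder $(I_{\alpha}\ast|v|^{q})(x)=\tfrac{\mathcal{A}^{N-\alpha}}{|x|^{N-\alpha}}+O(|x|^{-(N-\alpha+\sigma)})$ with $N-\alpha+\sigma>1$; only then is $\int^{\infty}\bigl(\sqrt{\underline W}-\sqrt{\overline W}\bigr)<\infty$, which is what lets the WKB comparison in case~(c) produce a finite nonzero limit. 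Your $o(|x|^{-(N-\alpha)})$ is too weak for this step. Relatedly, in the regime (ii) where both $p,q<2$ there is no ``faster'' component to treat first; the paper instead bootstraps using the $L^{1+\varepsilon}$ integrability of one component furnished by Theorem~\ref{t10novo}(d) (via Lemma~\ref{decaimento}(d)) to get the kernel expansion needed for the other.
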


\begin{remark}
Theorem \ref{decaimentosol} deserves some comments. First, observe that in case $\max\{p,q\} >2$ or $p=q=2$ no extra condition (besides \eqref{H1}) is needed to get the asymptotic decay for $u$ and $v$ at infinity. Regarding condition at (i) and (ii), taking into consideration the lower critical exponent $\frac{N+\alpha}{N}$ that appears for the Choquard equation, we stress that
\[
\max\left\{ \frac{2N}{2N-\alpha}, \frac{2(\alpha+1)}{N+1}\right\} < \frac{N+\alpha}{N}.
\]
In addition, in contrast with the scalar case, as in \cite[Theorem 4]{Moroz2013}, which implies that every ground state solution of \eqref{choquardeq} is $L^1(\mathbb{R}^N)$, there are ground state solution of \eqref{P} such that one of the components is not $L^1(\mathbb{R}^N)$. For instance, this is the case with $p>2$ and $q \leq \frac{N+\alpha}{N}$.
\end{remark}

\medbreak
Finally, we prove a non-existence result, which shows that \eqref{H1} is a natural condition for the existence of solutions. We recall that the Choquard equation \eqref{choquardeq}, with $p>1$, has two critical exponents for the existence of solution, namely, if $pN \leq N+\alpha$ or $p(N-2)\geq N+\alpha$, then no nontrivial solution for \eqref{choquardeq} exists; see \cite[Theorem 2]{Moroz2013}. Theorem below says that the corresponding result for the Hartree system \eqref{P} is the existence of two critical straight lines, namely
\[
(p+q)N = 2(N+\alpha) \quad \text{and} \quad (p+q)(N-2) = 2(N+\alpha).
\]

\begin{theorem}\label{t4}
Assume $N\geq 1$, $\alpha\in (0, N)$, $p>1$ and $q>1$. If $(u,v)$ is a weak solution of \eqref{P}, with $u\in H^{1}(\mathbb{R}^N) \cap W_{loc}^{2, 2}(\mathbb{R}^N)\cap W^{1, \theta_1 p}(\mathbb{R}^N)$, $v\in H^{1}(\mathbb{R}^N) \cap W_{loc}^{2, 2}(\mathbb{R}^N)\cap W^{1, \theta_2 q}(\mathbb{R}^N)$ for some $\theta_1>1, \theta_2> 1$ with $\frac{1}{\theta_1}+ \frac{1}{\theta_ 2} = \frac{N+\alpha}{N}$ and such that
\[
(p+q)N \leq 2(N+\alpha) \quad \text{or} \quad (p+q)(N-2) \geq 2(N+\alpha) ,
\]
then $u= v=0$ in $\mathbb{R}^N$.
\end{theorem}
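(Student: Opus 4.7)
The strategy will be the classical Nehari--Pohozaev scheme, adapted to the system \eqref{P}. The choice $\tfrac{1}{\theta_1}+\tfrac{1}{\theta_2}=\tfrac{N+\alpha}{N}$ together with $u\in L^{\theta_1 p}$ and $v\in L^{\theta_2 q}$ (supplied by the assumed Sobolev integrabilities) is precisely what the Hardy--Littlewood--Sobolev inequality (Proposition \ref{t1}) requires to guarantee that all convolution integrals below are finite, and the symmetry of $I_\alpha$ yields $\int(I_\alpha*|v|^q)|u|^p=\int(I_\alpha*|u|^p)|v|^q$.

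For the Nehari-type identity, I would test the two equations with $u$ and $v$ respectively (admissible by $u,v\in H^1(\mathbb{R}^N)$) and add, to obtain
\[
A+B=2C,\qquad A:=\|\nabla u\|_2^2+\|\nabla v\|_2^2,\ \ B:=\|u\|_2^2+\|v\|_2^2,\ \ C:=\int_{\mathbb{R}^N}(I_\alpha*|v|^q)|u|^p.
\]

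For the Pohozaev-type identity, I would test the two equations with $\eta_R(x)(x\cdot\nabla u)$ and $\eta_R(x)(x\cdot\nabla v)$, where $\eta_R(x)=\eta(x/R)$ is a smooth cutoff equal to $1$ on $B_1$ and supported in $B_2$. The regularity $u,v\in W^{2,2}_{loc}(\mathbb{R}^N)$ makes these test functions admissible and lets one treat the distributional Laplacians as $L^2_{loc}$ functions. The linear parts will go through by classical integrations by parts and, after sending $R\to\infty$ against the $H^1$-data, will produce $\int(-\Delta u+u)(x\cdot\nabla u)=\tfrac{2-N}{2}\|\nabla u\|_2^2-\tfrac{N}{2}\|u\|_2^2$ together with its $v$-analog. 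For the nonlocal term, rewriting $|u|^{p-2}u(x\cdot\nabla u)=\tfrac{1}{p}x\cdot\nabla|u|^p$ (and analogously for $v$), applying Fubini on the resulting double integral, and exploiting the $-(N-\alpha)$-homogeneity of $I_\alpha$ to balance the $u$- and $v$-contributions will yield
\[
p\int(I_\alpha*|v|^q)|u|^{p-2}u(x\cdot\nabla u)+q\int(I_\alpha*|u|^p)|v|^{q-2}v(x\cdot\nabla v)=-(N+\alpha)\,C.
\]
Summing the two tested identities then gives the Pohozaev identity
\[
\frac{N-2}{2}A+\frac{N}{2}B=\frac{2(N+\alpha)}{p+q}\,C.
\]
The most delicate step is precisely this justification: passing $R\to\infty$ requires absolute global integrability of $(I_\alpha*|v|^q)|u|^{p-2}u(x\cdot\nabla u)$ and of its $v$-counterpart, which is exactly what Proposition \ref{t1} combined with H\"older and the hypotheses $\nabla u\in L^{\theta_1 p}$, $\nabla v\in L^{\theta_2 q}$ provide; it is also the reason the $W^{2,2}_{loc}$-assumption is necessary to carry out the local integrations by parts before the limit.

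Finally, eliminating $C$ between the Nehari and Pohozaev identities leaves
\[
\bigl[(N-2)(p+q)-2(N+\alpha)\bigr]A+\bigl[N(p+q)-2(N+\alpha)\bigr]B=0.
\]
If $(p+q)N\le 2(N+\alpha)$, the coefficient of $B$ is $\le 0$ while that of $A$ equals the coefficient of $B$ minus $2(p+q)$, hence is strictly negative; therefore $A=0$, and since $u,v\in L^2(\mathbb{R}^N)$ are then constants, $u=v=0$. If $(p+q)(N-2)\ge 2(N+\alpha)$ (which forces $N\ge 3$), the coefficient of $A$ is $\ge 0$ while that of $B$ equals the coefficient of $A$ plus $2(p+q)$, hence is strictly positive; therefore $B=0$ and again $u=v=0$. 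The two critical straight lines thus correspond to the two signs of the same algebraic relation, in perfect analogy with the scalar Choquard case \cite[Theorem 2]{Moroz2013}.
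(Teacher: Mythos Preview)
Your proposal is correct and follows essentially the same approach as the paper: combine the Nehari identity $A+B=2C$ with the Poho\v{z}aev identity (the paper states this as Proposition~\ref{p1}, with the same cut-off derivation you sketch, citing \cite[Proposition~3.1]{Moroz2013}) to obtain the single relation with the two coefficients, and then read off the conclusion from their signs in each of the two ranges. Your write-up is in fact more detailed about the passage to the limit in the nonlocal term than the paper's, which simply defers to the scalar case.
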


We illustrate at Figure \ref{pic2} our results regarding existence (Theorem \ref{t3}), non-existence (Theorem \ref{t4}), as well as the regions on the $(p,q)$-plane that are not covered by our (non-)existence results. 

\begin{figure}[!h]
\begin{center}
 \includegraphics[width=6cm]{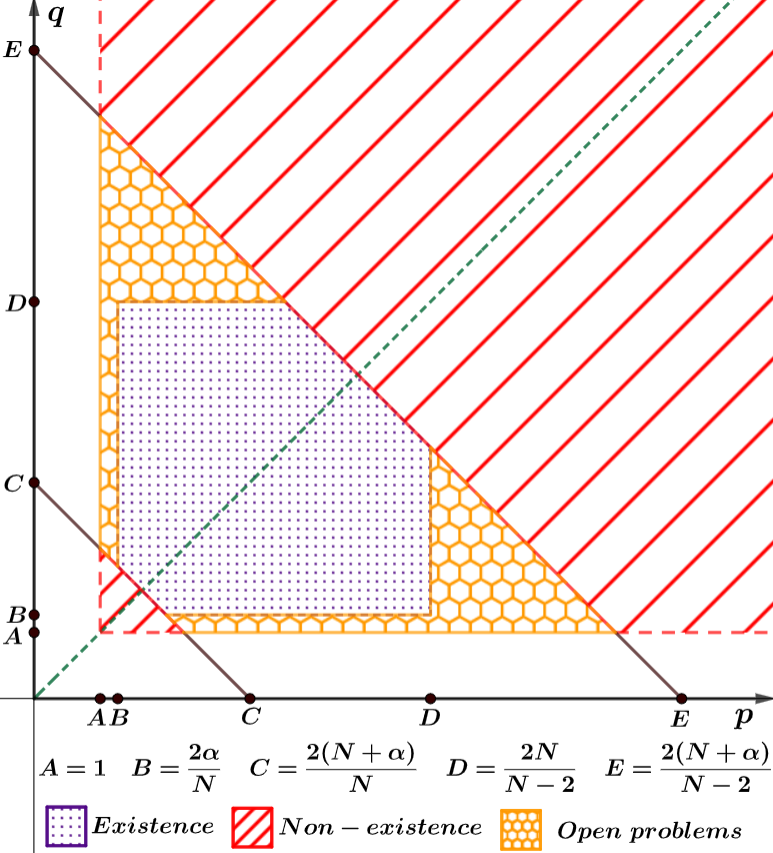}   
\end{center}
\caption{Regions on the $(p,q)$-plane corresponding to Theorem \ref{t3}, Theorem \ref{t4} and open problems. Here $N=3$ and $\alpha=1.9$.}\label{pic2}
\end{figure}

To end this introduction we describe how this paper is organized. In Section \ref{sec-var-frame} we describe in details how hypothesis \eqref{H1} enters for setting the appropriate definition of (weak) solution to \eqref{P} and the variational framework. Section \ref{sec-nehari} is devoted to the analysis of the Nehari manifold and the mountain pass geometry of the energy functional associated to \eqref{P}. In Section \ref{sec-exist} we prove that \eqref{P} has at least one positive ground state solution, while in Sections \ref{sec-symm} and \ref{sec-decay} we prove their symmetry and asymptotic decay, respectively. At Section \ref{sec-reg} we provide a regularity result for any (weak) solution of \eqref{P}. Finally, at Section \ref{sec-NE}, based on a Poho\v{z}aev-type identity, we establish regions on the $(p, q)$-plane where no nontrivial solution of \eqref{P} exists.

\section{Variational framework}\label{sec-var-frame}

In this section we set the variational framework associated to problem \eqref{P} and prove some technical results. We consider $H^{1}(\mathbb{R}^N)$ endowed with its usual norm $\|\cdot\|=\left(\|\,|\nabla \cdot|\,\|_{2}^{2}+\|\cdot\|_{2}^{2}\right)^{1/2}$, where $\|\cdot\|_\tau$ stands for the usual norm for the $L^\tau(\mathbb{R}^N)$ space, and $E=H^{1}(\mathbb{R}^N) \times H^{1}(\mathbb{R}^N)$ with the norm \linebreak $\|(u, v)\|_{E}=\left(\|u\|^2 + \|v\|^2\right)^{1/2}$.

Since the systems in this paper depend on Riesz potentials, we recall the well known \linebreak Hardy-Littlewood-Sobolev inequality, \cite[Theorem 4.3]{Lieb2001}, and we will refer to it as \eqref{HLS1} for short. 

\begin{proposition}\label{t1}
Let $N\geq1$, $\theta_1, \theta_2 > 1$, $\alpha\in(0,N)$ be such that $\frac{1}{\theta_1} + \frac{1}{\theta_2} = \frac{N+\alpha}{N}$. Consider $f\in L^{\theta_1}(\mathbb{R}^N)$ and $g \in L^{\theta_2}(\mathbb{R}^N)$. Then, there exists a constant $K_0=K_0(N, \alpha, \theta_1)$, independent of $f$ and $g$, such that
\begin{equation}\label{HLS1} \tag{HLS1}
\left| \int_{\mathbb{R}^N} \int_{\mathbb{R}^N} f(x)|x-y|^{-(N-\alpha)}g(y) dx dy \right| \leq K_0 \|f\|_{\theta_1} \|g\|_{\theta_2}.
\end{equation} 
\end{proposition}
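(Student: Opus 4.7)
The plan is to reduce the bilinear estimate to a linear mapping property of the Riesz convolution operator and then close via Hölder duality. Define
\[
J_\alpha g(x) := \int_{\mathbb{R}^N} |x-y|^{-(N-\alpha)} g(y)\,dy,
\]
so that the left-hand side of the claimed inequality equals $\int_{\mathbb{R}^N} f(x)\, J_\alpha g(x)\,dx$. The hypothesis $\frac{1}{\theta_1}+\frac{1}{\theta_2}=\frac{N+\alpha}{N}$ rewrites as $\frac{1}{\theta_1'} = \frac{1}{\theta_2} - \frac{\alpha}{N}$, where $\theta_1'$ is the Hölder conjugate of $\theta_1$. Hence, after replacing $f,g$ by $|f|,|g|$ at the outset, it suffices to prove the linear estimate $\|J_\alpha g\|_{\theta_1'} \leq C \|g\|_{\theta_2}$; Hölder's inequality then yields the bilinear bound with $K_0=C$.

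For the linear estimate I would carry out a Hedberg-type pointwise argument. Split $J_\alpha g(x)$ as the sum of the integrals over $B_R(x)$ and its complement. On $B_R(x)$, peel off the kernel in dyadic annuli $\{2^{-k-1}R<|x-y|\leq 2^{-k}R\}$ and bound the resulting sum by $C R^\alpha\, Mg(x)$, where $M$ denotes the Hardy-Littlewood maximal operator. On the complement, Hölder with the conjugate pair $(\theta_2,\theta_2')$ is legitimate because $\theta_2>1$, and the integral $\int_{|x-y|\geq R} |x-y|^{-(N-\alpha)\theta_2'}\,dy$ is finite since the condition $\theta_1'>0$ forces $\theta_2<N/\alpha$; this produces a contribution of order $R^{\alpha - N/\theta_2}\|g\|_{\theta_2}$. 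Optimizing in $R$ so the two contributions balance yields the pointwise inequality
\[
J_\alpha g(x) \leq C\,(Mg(x))^{1-\alpha\theta_2/N}\,\|g\|_{\theta_2}^{\alpha\theta_2/N}.
\]
Raising to the power $\theta_1'$ and integrating, the relation $\frac{1}{\theta_1'}=\frac{1}{\theta_2}-\frac{\alpha}{N}$ forces the exponent on $Mg$ to become exactly $\theta_2$. The strong-type $(\theta_2,\theta_2)$ boundedness of $M$ (valid since $\theta_2>1$) then delivers $\|J_\alpha g\|_{\theta_1'}\leq C\|g\|_{\theta_2}$, which is the linear estimate.

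The main obstacle I anticipate is the bookkeeping in the Hedberg step: one must verify that the scaling relation $\frac{1}{\theta_1}+\frac{1}{\theta_2}=\frac{N+\alpha}{N}$ aligns the algebra so that $Mg$ appears at precisely the power $\theta_2$ after integration. The hypothesis $\theta_1,\theta_2>1$ is essential in two places: to run the Hölder step on the far-field (which requires $\theta_2'<\infty$), and to invoke the strong-type boundedness of $M$ (which fails at $\theta_2=1$, where only a weak-type estimate is available). As an alternative I would keep in reserve, one can first prove the weak-type endpoint $J_\alpha:L^1\to L^{N/(N-\alpha),\infty}$ via layer-cake and Riesz rearrangement, and then reach the strong-type estimate by Marcinkiewicz interpolation; this avoids the maximal function entirely at the cost of introducing Lorentz spaces.
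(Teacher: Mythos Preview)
Your argument is correct: this is the classical Hedberg pointwise inequality followed by the strong $(\theta_2,\theta_2)$ bound for the maximal function, and the exponent bookkeeping you worry about does close exactly under the relation $\frac{1}{\theta_1'}=\frac{1}{\theta_2}-\frac{\alpha}{N}$. One cosmetic slip: where you write ``the condition $\theta_1'>0$ forces $\theta_2<N/\alpha$'' you mean $\theta_1'<\infty$ (equivalently $1/\theta_1'>0$), which is what $\theta_1>1$ gives.

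As for comparison with the paper: there is nothing to compare. The paper does not prove this proposition; it is stated as the Hardy--Littlewood--Sobolev inequality and simply cited from \cite[Theorem 4.3]{Lieb2001}. Your Hedberg-type proof and your backup route via the weak-type endpoint plus Marcinkiewicz interpolation are both standard self-contained proofs of this classical inequality, and either would serve if one wanted to include a proof rather than a citation.
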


Moreover, as in \cite[eq. (1.3)]{Moroz2013}, the following result, also known as Hardy-Littlewood-Sobolev inequality plays an important role in our arguments.  

\begin{proposition}\label{t2} 
Let $N\geq 1$, $\alpha\in (0, N)$ and $s\in \left(1, \frac{N}{\alpha}\right)$. Then, for any $\varphi \in L^{s}(\mathbb{R}^N)$, $(I_\alpha \ast \varphi) \in L^{\frac{Ns}{N-\alpha s}}(\mathbb{R}^N)$ and
\begin{equation}\label{HLS2} \tag{HLS2}
\int_{\mathbb{R}^N} |I_\alpha \ast \varphi|^{\frac{Ns}{N-\alpha s}} dx \leq K_1 \left(\int_{\mathbb{R}^N}  |\varphi|^s dx \right)^{\frac{N}{N-\alpha s}},
\end{equation}
where $K_1=K_1(N, \alpha, s)$ is a positive constant.
\end{proposition}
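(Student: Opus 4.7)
The plan is to derive Proposition~\ref{t2} from Proposition~\ref{t1} by a duality argument, exploiting the fact that $I_\alpha(x)$ equals $|x|^{-(N-\alpha)}$ up to the positive multiplicative constant $C_{N,\alpha} := \Gamma((N-\alpha)/2)/(\Gamma(\alpha/2)\pi^{N/2} 2^\alpha)$.

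First I would set $r := \frac{Ns}{N-\alpha s}$. The assumption $s < N/\alpha$ makes $r$ finite, while $s>1 > N/(N+\alpha)$ forces $r>1$, so $r\in (1,+\infty)$. Let $r' := r/(r-1)$ denote its H\"older conjugate; a direct calculation from $\frac{1}{r} = \frac{1}{s}-\frac{\alpha}{N}$ yields
\[
\frac{1}{s} + \frac{1}{r'} = \frac{1}{s} + 1 - \frac{1}{s} + \frac{\alpha}{N} = \frac{N+\alpha}{N},
\]
so Proposition~\ref{t1} applies with the pair $(\theta_1,\theta_2) = (s, r')$.

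Next, fix a nonnegative $\varphi\in L^s(\mathbb{R}^N)$. For any nonnegative $g\in L^{r'}(\mathbb{R}^N)$, Tonelli's theorem and \eqref{HLS1} give
\[
\int_{\mathbb{R}^N} g\,(I_\alpha\ast\varphi)\,dx = C_{N,\alpha}\int_{\mathbb{R}^N}\!\!\int_{\mathbb{R}^N} g(x)|x-y|^{-(N-\alpha)}\varphi(y)\,dx\,dy \le C_{N,\alpha}K_0\|g\|_{r'}\|\varphi\|_s.
\]
Taking $g$ to be the characteristic function of any set of finite positive measure shows that $(I_\alpha\ast\varphi)(x)<+\infty$ for a.e.\ $x$. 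The standard duality identity $\|h\|_r = \sup\{\int hg\,dx : g\ge 0,\ \|g\|_{r'}\le 1\}$ for nonnegative measurable $h$ then yields $\|I_\alpha\ast\varphi\|_r \le C_{N,\alpha}K_0\|\varphi\|_s$. For a general $\varphi\in L^s(\mathbb{R}^N)$, the pointwise bound $|I_\alpha\ast\varphi|\le I_\alpha\ast|\varphi|$ together with $\||\varphi|\|_s=\|\varphi\|_s$ extends the estimate to $\varphi$. Raising to the $r$-th power and taking $K_1 := (C_{N,\alpha}K_0)^r$ produces exactly the inequality as stated, with $K_1$ depending only on $N$, $\alpha$ and $s$.

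The main point of care is purely bookkeeping: verifying the identity $\frac{1}{s}+\frac{1}{r'}=\frac{N+\alpha}{N}$ so that Proposition~\ref{t1} is applicable, and making sure the duality identity is invoked on the nonnegative function $I_\alpha\ast|\varphi|$ to avoid any measurability or integrability ambiguity. I do not anticipate a substantive analytic obstacle beyond this.
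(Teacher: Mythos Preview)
Your duality argument is correct and standard. Note, however, that the paper does not supply its own proof of this proposition: it is simply recalled as a known form of the Hardy--Littlewood--Sobolev inequality, with a reference to \cite[eq.~(1.3)]{Moroz2013}, so there is no paper-proof to compare against.
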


For using variational methods to treat \eqref{P}, from Proposition \ref{t1} and the Sobolev embeddings of $H^1(\mathbb{R}^N) \hookrightarrow L^{r}(\mathbb{R}^N)$ for $\frac{N-2}{2N}\leq \frac{1}{r} \leq \frac{1}{2}$, we want to classify the numbers $p>0$ and $q>0$ such that there exist $\theta_1 , \theta_2$ satisfying the following conditions:
\begin{equation}\label{novaeq}
\theta_1, \theta_2 >1, \quad \dfrac{N-2}{2N} < \frac{1}{\theta_1 p} < \frac{1}{2}, \ \ \ \dfrac{N-2}{2N} < \frac{1}{\theta_2 q} < \frac{1}{2} \mbox{ \ \ \ and \ \ \ } \dfrac{1}{\theta_1}+\dfrac{1}{\theta_2}=\dfrac{N+\alpha}{N}, 
\end{equation}
which are equivalent to
\[
\theta_1, \theta_2 >1, \quad \frac{N-2}{2N}p < \frac{1}{\theta_1} < \frac{p}{2}, \quad \frac{N-2}{2N}q < \frac{1}{\theta_2} < \frac{q}{2} \mbox{ \ \ \ and \ \ \ } \dfrac{1}{\theta_1}+\dfrac{1}{\theta_2}=\dfrac{N+\alpha}{N}.
\]
Then, from direct calculation, the precise conditions on $p>0$ and $q>0$ for the existence of such $\theta_1$ and $\theta_2$ are
\begin{equation}\label{H2}
\tag{H2}
\frac{N-2}{2N} < \frac{1}{p}\, , \frac{1}{q} < \frac{N}{2\alpha} \quad \text{with} \quad \dfrac{N-2}{2(N+\alpha)} < \dfrac{1}{p+q} < \dfrac{N}{2(N+\alpha)}.  
\end{equation}

Throughout in this paper, when \eqref{H2} is assumed, $\theta_1$ and $\theta_2$ as in \eqref{novaeq} are chosen and fixed. Also observe that \eqref{H2}, combined with $p>1$ and $q>1$, reads \eqref{H1}.

\begin{remark}\label{thetas}
One of the contributions of this paper is the relaxation of the condition on $(p,q)$ for proving the existence of solution to \eqref{P}. For using variational methods, the key ingredient is the existence of $\theta_1$ and $\theta_2$ as in \eqref{novaeq}. Since $\theta_2$ depends on $\theta_1$, the precise condition on $\theta_1$ for this purpose is that
\begin{equation}\label{conditionthetaum}
\max\left\{ \dfrac{\alpha}{N}, \dfrac{p(N-2)}{2N}, \dfrac{N+\alpha}{N} - \dfrac{q}{2} \right\} < \dfrac{1}{\theta_1} < \min\left\{ 1, \dfrac{p}{2}, \dfrac{N + \alpha}{N} - \dfrac{q(N-2)}{2N} \right\}.
\end{equation}
Indeed, observe that \eqref{H2} is equivalent to the existence of $\theta_1$ as in \eqref{conditionthetaum}. When choosing $\theta_1 = \frac{2N}{N+\alpha}$, and so $\theta_1 = \theta_2$, one imposes extra restrictions on $(p,q)$, namely $\frac{N-2}{N+\alpha}< \frac{1}{p}, \frac{1}{q}< \frac{N}{N+\alpha}$, as in \cite{Chen2023a}. Another natural par $(\theta_1, \theta_2)$ would be $\theta_1 = \frac{N}{N+\alpha} \frac{p+q}{p}$ and $\theta_2 = \frac{N}{N+\alpha} \frac{p+q}{q}$, which in turn would require the extra condition
\[
\dfrac{\alpha}{N} < \dfrac{q}{p} < \dfrac{N}{\alpha}.
\]
So, in this paper, supposing \eqref{H1}, hence \eqref{H2}, although not given explicitly, there exist $\theta_1, \theta_2$ as in \eqref{novaeq}.
\end{remark}

\begin{remark}\label{obs1}
Let $\alpha \in (0,N)$, $p,q$ satisfying \eqref{H2}, $\theta_1$ and $\theta_2$ be as in \eqref{novaeq}. From \eqref{HLS1} and the Sobolev embeddings, then for all $u, v\in H^{1}(\mathbb{R}^N)$,
\begin{equation}\label{eq5}
\int_{\mathbb{R}^N} (I_\alpha \ast |u|^p)|v|^q dx \leq K_0 \|u\|_{\theta_1 p}^{p}\|v\|_{\theta_2 q}^{q}.
\end{equation}
If in addition $p>1$ and $q>1$, namely if we assume \eqref{H1}, then for all $u,v,\varphi, \psi \in H^{1}(\mathbb{R}^N)$,
\begin{equation}\label{eq6}
\left.
\begin{array}{rcl}
\displaystyle\int_{\mathbb{R}^N} (I_\alpha \ast |v|^q)|u|^{p-2} u \varphi dx &\leq& \displaystyle K_0 \|u\|_{\theta_1 p}^{p-1}\|\varphi\|_{\theta_1p}\|v\|_{\theta_2 q}^{q},\vspace{10pt}\\
\displaystyle\int_{\mathbb{R}^N} (I_\alpha \ast |u|^p)|v|^{q-2} v \psi dx &\leq&\displaystyle K_0 \|u\|_{\theta_1 p}^{p}\|v\|_{\theta_2 q}^{q-1}\|\psi\|_{\theta_2 q}.
\end{array}
\right\}
\end{equation}
\end{remark}

\begin{definition}\label{def-sol}
At this point, assuming \eqref{H1}, we can define the Euler–Lagrange functional $I: E \rightarrow \mathbb{R}$,
\begin{align}\label{eq1}
I(u,v)& =\dfrac{1}{2}\int_{\mathbb{R}^N} [|\nabla u|^2 +|u|^2 +|\nabla v|^2 +|v|^2] dx - \dfrac{2}{p+q}\int_{\mathbb{R}^N} \int_{\mathbb{R}^N} \dfrac{|u(x)|^p |v(y)|^q}{|x-y|^{N-\alpha}} dx dy \\  \nonumber
& = \dfrac{1}{2}\|(u, v)\|_{E}^{2}- \dfrac{2}{p+q}\int_{\mathbb{R}^N} (I_\alpha \ast |u|^p)|v|^q dx,
\end{align}
which is $C^1(E; \mathbb{R})$, whose derivative at $(u,v)$ is given by
\begin{align}\label{eq2}
I'(u, v)(\varphi, \psi) & = \int_{\mathbb{R}^N} [\nabla u \nabla \varphi + u \varphi + \nabla v \nabla \psi + v \psi ] dx - \dfrac{2q}{p+q}\int_{\mathbb{R}^N} (I_\alpha \ast |u|^p)|v|^{q-2}v \psi dx  \\ \nonumber
& - \dfrac{2p}{p+q}\int_{\mathbb{R}^N} (I_\alpha \ast |v|^q)|u|^{p-2}u \varphi dx, \quad \forall \, (\varphi, \psi) \in E.
\end{align}
A pair $(u,v) \in E$ is a solution of \eqref{P} if it is a critical point of $I$. Moreover, a solution $(U,V) \in E\backslash\{(0,0\}$ of \eqref{P} is called a ground state solution if
\[
I(U,V) = \min\{\,I(u,v); \ (u,v) \ \text{is a nontrivial solution of \eqref{P}}\,\}.
\]
\end{definition}

\section{Nehari manifold and mountain pass geometry}\label{sec-nehari}

Throughout in this section we assume \eqref{H1} and so $\theta_1$ and $\theta_2$ as in \eqref{novaeq} are chosen and fixed. Here we explore some properties of the Nehari manifold associated to \eqref{P} and show that $I$ has the mountain pass geometry around $(0,0)$. The proof for some of these results are classical and, for this reason, are omitted. 

Consider the auxiliary functional $P: E \rightarrow \mathbb{R}$ given by
\begin{equation}\label{eq3}
P(u, v)=I'(u, v)(u, v)=\|(u, v)\|_{E}^{2}-2\int_{\mathbb{R}^N} (I_\alpha \ast |u|^p)|v|^q dx,
\end{equation}
the Nehari manifold
\begin{equation}\label{eq4}
\mathcal{N} = \{(u, v)\in E\setminus \{(0, 0)\} ; P(u, v)=0\}, 
\end{equation}
and define the real values
\begin{align*}
&c_{mp}=\inf\limits_{\gamma\in\Gamma} \max\limits_{t\in [0, 1]} I(\gamma(t)), \ \  c_{\mathcal{N}} = \inf_{(u,v) \in \mathcal{N}} I(u,v), \ \  \overline{c} = \inf\limits_{(u, v)\in E\setminus \{(0, 0)\}} \max\limits_{t>0} I(tu, tv) \ \ \text{and} \vspace{5pt} \\ &\widetilde{c} = \left(\frac{1}{2}-\frac{1}{p+q}\right) \inf_{(u,v)\in E, u \neq 0, v \neq 0}  \left[ \frac{\|(u,v)\|_E^2}{\left(2\int_{\mathbb{R}^N} (I_{\alpha} \ast |u|^p)|v|^q \right)^{2/(p+q)}}\right]^{(p+q)/(p+q-2)}.
\end{align*}
where $\Gamma = \{\gamma:[0, 1]\rightarrow E \ ; \gamma \mbox{ is continuous }, \gamma(0)=0 \mbox{ and } I(\gamma(1))<0\}$.

\begin{lemma}\label{l3}
Let $w = (u,v)\in E$ be such that $u\neq0$ and $v \neq 0$. Then, there exists a unique $\overline{t}_w  >0$ such that $\overline{t}_w w\in \mathcal{N}$. Moreover:
\begin{enumerate}[(i)]
\item $\displaystyle \max_{t \in [0, \infty)} I(tw) = I(\overline{t}_w w)$.
\item $
\overline{t}_w = \left[\dfrac{\|(u, v)\|_{E}^{2}}{2\int_{\mathbb{R}^N} (I_\alpha \ast |u|^p)|v|^q dx} \right]^{\frac{1}{p+q-2}}.
$
\item $\displaystyle I(\overline{t}_w w) = \left(\frac{1}{2}-\frac{1}{p+q}\right)  \left[ \frac{\|(u,v)\|_E^2}{\left(2\int_{\mathbb{R}^N} (I_{\alpha} \ast |u|^p)|v|^q \right)^{2/(p+q)}}\right]^{(p+q)/(p+q-2)}$.
\item $w = (u,v)\in \mathcal{O} \mapsto \overline{t}_w \in \mathbb{R}$ is continuous, where $\mathcal{O}$ stands for the open set $\{ (u,v) \in E ; u\neq 0 \ \ \text{and} \ \ v \neq 0\}$.
\item $c_{mp}= \overline{c} = \widetilde{c}= c_{\mathcal{N}}>0$.
\end{enumerate}
\end{lemma}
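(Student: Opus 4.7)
\medskip
\noindent \textbf{Proof proposal for Lemma \ref{l3}.} The plan is to reduce everything to a one-variable analysis of the fibering map $\phi(t) := I(tw)$ for $t>0$, with $w=(u,v)$ and $u\not\equiv 0$, $v\not\equiv 0$. Setting $A:=\|w\|_E^2>0$ and $B:=2\int_{\mathbb{R}^N}(I_\alpha\ast|u|^p)|v|^q$, one has $\phi(t)=\tfrac{1}{2}At^2-\tfrac{1}{p+q}B\,t^{p+q}$. The key observation is $B>0$: indeed $I_\alpha\ast|u|^p$ is strictly positive wherever $|u|\not\equiv 0$, hence pointwise positive on $\mathbb{R}^N$, and $|v|^q>0$ on a set of positive measure, so the integral is strictly positive. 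Since $p+q>2$ by \eqref{H1}, $\phi$ is strictly concave in $t^{p+q-2}$ and $\phi'(t)=t\bigl(A-B\,t^{p+q-2}\bigr)$ has a unique zero $\overline{t}_w$ in $(0,\infty)$; solving yields the formula in (ii) and the strict monotonicity gives (i). Substituting $A=B\,\overline{t}_w^{p+q-2}$ into $\phi(\overline{t}_w)$ produces $(\tfrac{1}{2}-\tfrac{1}{p+q})\overline{t}_w^2 A$; a straightforward algebraic manipulation using (ii) rewrites this as the expression in (iii).

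For (iv), the continuity of $w\mapsto \overline{t}_w$ on the open set $\mathcal{O}$ is immediate from the explicit formula in (ii): the map $w\mapsto A(w)$ is continuous on $E$, while the map $w\mapsto B(w)$ is continuous on $E$ by virtue of the Hardy-Littlewood-Sobolev estimate \eqref{eq5} combined with the Sobolev embeddings $H^1(\mathbb{R}^N)\hookrightarrow L^{\theta_1 p}$ and $L^{\theta_2 q}$; moreover $B(w)>0$ throughout $\mathcal{O}$, so division is safe and the exponent $1/(p+q-2)$ preserves continuity.

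For (v), I proceed in four steps. First, $c_{\mathcal{N}}>0$: any $w\in\mathcal{N}$ satisfies $A=B$, and by \eqref{eq5} together with the Sobolev embeddings $B\leq C\|w\|_E^{p+q}$, whence $\|w\|_E^{p+q-2}\geq 1/C$, and since $I(w)=(\tfrac{1}{2}-\tfrac{1}{p+q})\|w\|_E^2$ on $\mathcal{N}$ the infimum is bounded below by a positive constant. Second, $c_{\mathcal{N}}=\overline{c}$: any $w\in\mathcal{N}$ must have both components nontrivial (else $B=0=A$ so $w=0$), so $\overline{t}_w=1$ and $I(w)=\max_{t>0}I(tw)\geq\overline{c}$; conversely, for any $w\in\mathcal{O}$, $\overline{t}_w w\in\mathcal{N}$ realises the maximum. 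Third, $\overline{c}=\widetilde{c}$ is exactly the content of (iii) after taking the infimum over $\mathcal{O}$. Finally, for $c_{mp}=c_{\mathcal{N}}$: the inequality $c_{mp}\leq\overline{c}$ follows by choosing, for fixed $w\in\mathcal{O}$, the path $\gamma(s)=sT w$ with $T$ so large that $I(Tw)<0$, which belongs to $\Gamma$ and has maximum equal to $I(\overline{t}_w w)$. The reverse inequality $c_{mp}\geq c_{\mathcal{N}}$ is the only delicate point: one must show that every admissible path $\gamma\in\Gamma$ intersects $\mathcal{N}$. Since $I\geq\rho>0$ on a small sphere around the origin in $E$ (consequence of \eqref{eq5}), and $I(\gamma(1))<0$, a continuity/intermediate-value argument applied to the function $s\mapsto P(\gamma(s))=I'(\gamma(s))(\gamma(s))$, which is positive near $s=0$ (small norm regime) and negative by the time $I(\gamma(s))<0$ along the ray where $u\not\equiv 0$, $v\not\equiv 0$ of $\gamma(s)$, produces a crossing.

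The main obstacle I anticipate is exactly the last step, because elements of a generic path $\gamma$ may have a vanishing component at isolated times, and $\mathcal{N}$ is defined only on $E\setminus\{0\}$ with the implicit requirement (forced by $P=0$) that both components are nontrivial. I would handle this by arguing that the set $\{s\in[0,1]\colon P(\gamma(s))\geq 0\}$ is nonempty, closed, contains a neighbourhood of $0$, and has a largest element $s_*<1$ with $P(\gamma(s_*))=0$; at $s_*$ one must have $\gamma(s_*)\neq 0$ (otherwise $I$ would be near zero, contradicting $I(\gamma(s_*))\geq\inf_{\|w\|=r}I>0$ for some intermediate time, which is easily handled by slightly shifting $s_*$ using the mountain-pass geometry), and both components of $\gamma(s_*)$ must be nontrivial (otherwise $P(\gamma(s_*))=\|\gamma(s_*)\|_E^2>0$, contradicting $P(\gamma(s_*))=0$).
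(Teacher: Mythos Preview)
Your argument is correct and is exactly the standard fibering-map/Nehari-manifold analysis that the paper invokes without details by citing \cite[Chapter~4]{Willem1996}; you have simply written out what the paper takes for granted. The one place worth tightening is the justification that $\gamma(s_*)\neq 0$ in your final paragraph: rather than the mountain-pass detour you sketch, note directly that if $\gamma(s_*)=0$ then by continuity of $\gamma$ one has $\|\gamma(s)\|_E\leq\rho$ for all $s$ in a right-neighbourhood of $s_*$, whence $P(\gamma(s))\geq 0$ there (either $\gamma(s)=0$ and $P=0$, or $0<\|\gamma(s)\|_E\leq\rho$ and $P(\gamma(s))>0$ by \eqref{eq11}), contradicting the maximality of $s_*$.
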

\begin{proof}
It follows from direct computations that are similar as, for example, in \cite[Chapter 4]{Willem1996}.
\end{proof}

\begin{lemma}\label{l6}
There exists a constant $K_2>0$ such that $\|(u, v)\|_E \geq K_2 > 0$, for all $(u, v)\in \mathcal{N}$.
\end{lemma}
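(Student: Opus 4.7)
The plan is a direct Nehari-manifold argument using the Hardy--Littlewood--Sobolev inequality together with the Sobolev embeddings. Fix $(u,v)\in\mathcal{N}$. By the definition of $\mathcal{N}$ and of $P$ in \eqref{eq3},
\[
\|(u,v)\|_E^{2} \;=\; 2\int_{\mathbb{R}^N} (I_\alpha\ast|u|^p)|v|^q\,dx.
\]
The immediate task is to bound the right-hand side from above by a suitable power of $\|(u,v)\|_E$ strictly greater than $2$; the gap between that exponent and $2$ is what forces the lower bound.

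First, I would apply \eqref{eq5} from Remark \ref{obs1}, which gives
\[
\int_{\mathbb{R}^N}(I_\alpha\ast|u|^p)|v|^q\,dx \;\leq\; K_0\,\|u\|_{\theta_1 p}^{\,p}\,\|v\|_{\theta_2 q}^{\,q},
\]
where $\theta_1,\theta_2$ are the exponents fixed in \eqref{novaeq}. The constraints \eqref{novaeq} were chosen precisely so that $\theta_1 p$ and $\theta_2 q$ lie in the admissible Sobolev range of $H^{1}(\mathbb{R}^N)$, so there are constants $C_1,C_2>0$ (depending only on $N$, $p$, $q$, $\alpha$) with $\|u\|_{\theta_1 p}\le C_1\|u\|$ and $\|v\|_{\theta_2 q}\le C_2\|v\|$. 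Since $\|u\|,\|v\|\le \|(u,v)\|_E$, we obtain
\[
2\int_{\mathbb{R}^N}(I_\alpha\ast|u|^p)|v|^q\,dx \;\leq\; 2K_0 C_1^{p} C_2^{q}\,\|(u,v)\|_E^{p+q}.
\]

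Combining with the Nehari identity,
\[
\|(u,v)\|_E^{2} \;\leq\; 2K_0 C_1^{p} C_2^{q}\,\|(u,v)\|_E^{p+q}.
\]
Now I would argue that $\|(u,v)\|_E>0$: by definition $(u,v)\neq(0,0)$, and if (say) $u\equiv 0$ then $P(0,v)=\|v\|^2$ which forces $v\equiv 0$, a contradiction; so $u\not\equiv 0$ and $v\not\equiv 0$, in particular $\|(u,v)\|_E>0$. By \eqref{H1} we have $p+q>\tfrac{2(N+\alpha)}{N}>2$, so $p+q-2>0$ and dividing the previous inequality by $\|(u,v)\|_E^{2}$ yields
\[
\|(u,v)\|_E^{\,p+q-2} \;\geq\; \frac{1}{2K_0 C_1^{p} C_2^{q}},
\]
which gives the desired uniform lower bound
\[
\|(u,v)\|_E \;\geq\; K_2 \;:=\; \Bigl(2K_0 C_1^{p} C_2^{q}\Bigr)^{-\frac{1}{p+q-2}}>0.
\]

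There is no serious obstacle: the only point worth checking carefully is that the strict inequality $p+q>2$ holds under \eqref{H1} (so that the exponent $p+q-2$ is positive and division is legitimate), and that both components of a point in $\mathcal{N}$ are automatically nontrivial, so that $\|(u,v)\|_E>0$ and all the above quantities are meaningful.
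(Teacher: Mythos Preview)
Your proof is correct and follows essentially the same route as the paper: apply the Nehari identity, bound the nonlocal term via \eqref{eq5} and the Sobolev embeddings $H^1\hookrightarrow L^{\theta_1 p}, L^{\theta_2 q}$, and then use $p+q>2$ to extract the uniform lower bound. You add a bit of extra care (both components nontrivial, an explicit value for $K_2$), but the argument is the same.
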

\begin{proof}
Let $(u, v)\in \mathcal{N}$. Then from \eqref{eq5}, the embeddings $H^1(\mathbb{R}^N) \hookrightarrow L^{\theta_1p}(\mathbb{R}^N)$, $H^1(\mathbb{R}^N) \hookrightarrow L^{\theta_21}(\mathbb{R}^N)$, we infer that
$$
\|(u, v)\|_{E}^{2} = 2 \int_{\mathbb{R}^N} (I_\alpha \ast |u|^p)|v|^q dx 
\leq K_0 \|u\|_{\theta_1 p}^{p}\|v\|_{\theta_2 q}^{q} \leq C \|(u, v)\|_{E}^{p+q}. 
$$
The desired inequality follows from the fact that $p+q>2$.
\end{proof}

\begin{lemma}\label{l7}
There exists $\rho>0$ such that
\begin{equation}\label{eq10}
m_\beta = \inf\{I(u, v) \ ; (u, v)\in E, \|(u, v)\|_E = \beta \} > 0, \mbox{ \ \ for any \ \ } 0 < \beta \leq \rho
\end{equation}
and 
\begin{equation}\label{eq11}
n_\beta = \inf\{P(u, v) \ ; (u, v)\in E, \|(u, v)\|_E = \beta \} > 0, \mbox{ \ \ for any \ \ } 0 < \beta \leq \rho.
\end{equation}
\end{lemma}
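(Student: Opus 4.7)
The plan is to bound the nonlocal term in $I$ and $P$ from above by a constant multiple of $\|(u,v)\|_E^{p+q}$, and then exploit the fact that $p+q > 2$ to dominate the linear (quadratic) term when $\|(u,v)\|_E$ is small.

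First I would combine inequality \eqref{eq5} from Remark \ref{obs1} with the Sobolev embeddings $H^{1}(\mathbb{R}^N) \hookrightarrow L^{\theta_1 p}(\mathbb{R}^N)$ and $H^{1}(\mathbb{R}^N) \hookrightarrow L^{\theta_2 q}(\mathbb{R}^N)$ (which hold precisely because \eqref{novaeq} places $\theta_1 p, \theta_2 q$ in the Sobolev range, see the discussion preceding \eqref{H2}) to deduce the existence of a constant $C = C(N,\alpha,p,q) > 0$ such that
\[
\int_{\mathbb{R}^N} (I_\alpha \ast |u|^p)|v|^q\,dx \;\leq\; K_0 \|u\|_{\theta_1 p}^p\|v\|_{\theta_2 q}^q \;\leq\; C\|u\|^p\|v\|^q \;\leq\; C\|(u,v)\|_E^{p+q},
\]
where the last step uses $\|u\|,\|v\|\leq\|(u,v)\|_E$.

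Plugging this into the definitions \eqref{eq1} of $I$ and \eqref{eq3} of $P$, for any $(u,v)\in E$ with $\|(u,v)\|_E=\beta$ I obtain
\[
I(u,v) \;\geq\; \tfrac{1}{2}\beta^{2} - \tfrac{2C}{p+q}\beta^{p+q} \;=\; \beta^{2}\!\left[\tfrac{1}{2} - \tfrac{2C}{p+q}\beta^{p+q-2}\right],
\]
\[
P(u,v) \;\geq\; \beta^{2} - 2C\beta^{p+q} \;=\; \beta^{2}\!\left[1 - 2C\beta^{p+q-2}\right].
\]
Since hypothesis \eqref{H1} ensures $p+q > \frac{2(N+\alpha)}{N} > 2$, the bracketed expressions are strictly positive for all sufficiently small $\beta > 0$. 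Choosing $\rho > 0$ small enough so that both brackets exceed a positive constant for all $\beta \in (0,\rho]$ yields simultaneously $m_\beta > 0$ and $n_\beta > 0$, which is the desired conclusion.

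There is no real obstacle here beyond carefully combining \eqref{HLS1}, the Sobolev embeddings, and the exponent condition $p+q>2$; the only thing to watch is that the bound $\|u\|^p\|v\|^q \leq \|(u,v)\|_E^{p+q}$ is uniform in the splitting of mass between the two components, which is automatic from $\|u\|,\|v\|\leq\|(u,v)\|_E$. The resulting $\rho$ can be taken explicitly, for instance any $\rho \leq \min\!\left\{\bigl(\tfrac{p+q}{4C}\bigr)^{1/(p+q-2)}\!, (4C)^{-1/(p+q-2)}\right\}$.
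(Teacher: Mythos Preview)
Your proof is correct and follows essentially the same approach as the paper: both use \eqref{eq5} together with the Sobolev embeddings $H^1(\mathbb{R}^N)\hookrightarrow L^{\theta_1 p}(\mathbb{R}^N)$, $H^1(\mathbb{R}^N)\hookrightarrow L^{\theta_2 q}(\mathbb{R}^N)$ to bound the nonlocal term by $C\|(u,v)\|_E^{p+q}$, and then exploit $p+q>2$ to conclude. Your version is in fact slightly more detailed, making explicit the inequality $\|u\|^p\|v\|^q\leq\|(u,v)\|_E^{p+q}$ and giving a concrete choice of $\rho$.
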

\begin{proof}
From \eqref{eq5}, the embeddings $H^1(\mathbb{R}^N) \hookrightarrow L^{\theta_1p}(\mathbb{R}^N)$, $H^1(\mathbb{R}^N) \hookrightarrow L^{\theta_21}(\mathbb{R}^N)$, we infer that
\begin{align*}
&I(u, v) \geq \dfrac{1}{2}\|(u, v)\|_{E}^{2}\left[\, 1-C_1\|(u, v)\|_{E}^{p+q-2}\,\right] \mbox{ \ \ \ and \ \ \ }\vspace{10pt}\\  &P(u, v) \geq \|(u, v)\|_{E}^{2}\left[\, 1-C_2\|(u, v)\|_{E}^{p+q-2}\, \right].
\end{align*}
Then, for $\rho > 0$ sufficiently small, the desired inequalities follow from the fact that $p+q>2$.
\end{proof}

\begin{lemma}\label{l8}
The real number $c_\mathcal{N}$ is positive.
\end{lemma}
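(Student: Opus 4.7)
The plan is to combine Lemma \ref{l6} with the identity $P(u,v)=0$ on the Nehari manifold to get an explicit lower bound for $I$ in terms of $\|(u,v)\|_E^2$. First, for any $(u,v)\in\mathcal{N}$, the constraint $P(u,v)=0$ from \eqref{eq3} gives
\[
2\int_{\mathbb{R}^N}(I_\alpha\ast|u|^p)|v|^q\,dx=\|(u,v)\|_E^2.
\]
Substituting this directly into the definition of $I$ in \eqref{eq1}, the nonlocal term collapses to a multiple of the squared norm, yielding
\[
I(u,v)=\left(\frac{1}{2}-\frac{1}{p+q}\right)\|(u,v)\|_E^2.
\]
Since \eqref{H1} forces $p+q>2$, the coefficient $\frac{1}{2}-\frac{1}{p+q}$ is strictly positive.

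Next I would invoke Lemma \ref{l6}, which provides a uniform lower bound $\|(u,v)\|_E\geq K_2>0$ on the Nehari manifold. Combining this with the equality above gives, for every $(u,v)\in\mathcal{N}$,
\[
I(u,v)\geq\left(\frac{1}{2}-\frac{1}{p+q}\right)K_2^2>0.
\]
Taking the infimum over $\mathcal{N}$ yields $c_{\mathcal{N}}\geq\left(\frac{1}{2}-\frac{1}{p+q}\right)K_2^2>0$, which is the claim.

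There is no real obstacle here; the argument is entirely algebraic once Lemma \ref{l6} is in hand. The only subtlety to keep in mind is that the constant $K_2$ depends on the Hardy--Littlewood--Sobolev constant $K_0$ and on the Sobolev embedding constants for $H^1(\mathbb{R}^N)\hookrightarrow L^{\theta_1 p}(\mathbb{R}^N)$ and $H^1(\mathbb{R}^N)\hookrightarrow L^{\theta_2 q}(\mathbb{R}^N)$, both of which are guaranteed to be finite precisely because of the exponent conditions in \eqref{H1} (equivalently \eqref{H2}) together with the choice of $\theta_1,\theta_2$ as in \eqref{novaeq}.
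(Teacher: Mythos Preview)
Your argument is correct. It is essentially the same idea as the paper's, but organized slightly more directly: the paper cites Lemma~\ref{l3}(i) together with Lemmas~\ref{l6} and~\ref{l7}, i.e.\ it bounds $I(u,v)=\max_{t\geq 0}I(t(u,v))$ from below by the positive value $m_\beta$ of $I$ on a small sphere that the ray $t\mapsto t(u,v)$ must cross (using $\|(u,v)\|_E\geq K_2$). You instead exploit the Nehari constraint algebraically to write $I(u,v)=\bigl(\tfrac{1}{2}-\tfrac{1}{p+q}\bigr)\|(u,v)\|_E^2$ on $\mathcal{N}$ and then apply Lemma~\ref{l6}; this bypasses Lemma~\ref{l7} entirely and gives an explicit lower bound for $c_{\mathcal{N}}$. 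Both routes rest on the same uniform norm bound from Lemma~\ref{l6}, so there is no substantive difference.
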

\begin{proof}
It is a direct consequence of Lemma \ref{l3}(i) and Lemmas \ref{l6} and \ref{l7}. 
\end{proof}

\begin{lemma}\label{l9}
$\mathcal{N}$ is a $C^1$ manifold.
\end{lemma}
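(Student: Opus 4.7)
The plan is to show that $\mathcal{N} = P^{-1}(0) \cap (E \setminus \{(0,0)\})$ is the preimage of a regular value of a $C^1$ map, so that the standard implicit function / submersion theorem applies and gives a $C^1$ submanifold structure.

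First, I would note that since $I \in C^1(E;\mathbb{R})$ and $P(u,v) = I'(u,v)(u,v)$, one checks directly from \eqref{eq6} that $P \in C^1(E;\mathbb{R})$ and its Fréchet derivative at $(u,v)\in E$ acts on $(\varphi,\psi)\in E$ by
\begin{align*}
P'(u,v)(\varphi,\psi) &= 2\int_{\mathbb{R}^N}[\nabla u\nabla\varphi + u\varphi + \nabla v\nabla\psi + v\psi]\,dx \\
&\quad - 2p\int_{\mathbb{R}^N}(I_\alpha\ast|v|^q)|u|^{p-2}u\varphi\,dx - 2q\int_{\mathbb{R}^N}(I_\alpha\ast|u|^p)|v|^{q-2}v\psi\,dx.
\end{align*}

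Next I would test this derivative on the pair $(u,v)$ itself. Using the symmetry of the Riesz kernel to write $\int(I_\alpha\ast|v|^q)|u|^p\,dx = \int(I_\alpha\ast|u|^p)|v|^q\,dx$, one obtains
\[
P'(u,v)(u,v) = 2\|(u,v)\|_E^2 - 2(p+q)\int_{\mathbb{R}^N}(I_\alpha\ast|u|^p)|v|^q\,dx.
\]
For $(u,v) \in \mathcal{N}$ one has $2\int(I_\alpha\ast|u|^p)|v|^q\,dx = \|(u,v)\|_E^2$ by definition, so
\[
P'(u,v)(u,v) = \bigl(2-(p+q)\bigr)\|(u,v)\|_E^2.
\]
By \eqref{H1} we have $p+q > 2$, and by Lemma \ref{l6} we have $\|(u,v)\|_E \geq K_2 > 0$, so $P'(u,v)(u,v) < 0$. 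In particular $P'(u,v)$ is a nonzero element of $E^\ast$ at every point of $\mathcal{N}$.

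Finally I would conclude: since $\mathcal{N}$ is the level set $\{P=0\}$ intersected with the open set $E\setminus\{(0,0)\}$ (openness of which, on $\mathcal{N}$, is reinforced by Lemma \ref{l6}), and $0$ is a regular value of $P$ on this open set, the implicit function theorem in Banach spaces yields that $\mathcal{N}$ is a $C^1$ Banach submanifold of $E$ (of codimension one), with tangent space at $(u,v)\in\mathcal{N}$ given by $\ker P'(u,v)$. There is no real obstacle here; the only thing to keep track of is the symmetry identity used above and the need to quote Lemma \ref{l6} so that we are bounded away from $(0,0)$.
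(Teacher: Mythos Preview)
Your proof is correct and follows essentially the same approach as the paper: both compute $P'(u,v)(u,v)=(2-(p+q))\|(u,v)\|_E^2<0$ on $\mathcal{N}$ and conclude that $0$ is a regular value of $P$. Your write-up is slightly more detailed (explicitly writing the derivative, invoking the symmetry of the Riesz kernel, and naming the implicit function theorem), and the appeal to Lemma~\ref{l6} is harmless but not strictly needed, since $(u,v)\in\mathcal{N}$ already gives $(u,v)\neq(0,0)$.
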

\begin{proof}
We show that $P'(u, v)\neq 0$ for all $(u, v)\in \mathcal{N}$. Observe that,
$$
P'(u,v)(u, v)= 2\|(u, v)\|_{E}^{2}-2(p+q)\int_{\mathbb{R}^N} (I_\alpha \ast |u|^p)|v|^q dx = 2(1-p-q)\|(u, v)\|_{E}^{2} <0, 
$$
for all $(u, v)\in \mathcal{N}$, since $p+q>2$.
\end{proof}

Finally, we show that $\mathcal{N}$ is a natural constraint for $I$.

\begin{lemma}\label{l10}
Every critical point of $I\big\vert_{\mathcal{N}}$ is a critical point of $I$ in $E$.
\end{lemma}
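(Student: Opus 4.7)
The plan is to use the Lagrange multiplier rule combined with the computation already performed in the proof of Lemma \ref{l9}. Since $\mathcal{N} = P^{-1}(\{0\}) \cap (E \setminus \{(0,0)\})$ is a $C^1$ manifold by Lemma \ref{l9}, a critical point $(u,v)$ of $I\big\vert_{\mathcal{N}}$ must satisfy, by the Lagrange multiplier theorem, that there exists $\lambda \in \mathbb{R}$ such that
\[
I'(u,v) = \lambda\, P'(u,v) \quad \text{in } E^{*}.
\]
The goal is therefore to show that $\lambda = 0$, for then $I'(u,v) = 0$ in $E^{*}$, i.e. $(u,v)$ is a critical point of $I$ on $E$.

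To pin down $\lambda$, I would test the above identity against the pair $(u,v)$ itself. On the one hand,
\[
I'(u,v)(u,v) = P(u,v) = 0,
\]
since $(u,v) \in \mathcal{N}$. On the other hand, exactly as computed in the proof of Lemma \ref{l9},
\[
P'(u,v)(u,v) = 2\|(u,v)\|_E^2 - 2(p+q)\int_{\mathbb{R}^N}(I_\alpha \ast |u|^p)|v|^q\,dx = 2(1-p-q)\|(u,v)\|_E^2,
\]
where in the last equality I used once more the Nehari constraint $\|(u,v)\|_E^2 = 2\int_{\mathbb{R}^N}(I_\alpha \ast |u|^p)|v|^q\,dx$.

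Combining, $\lambda \cdot 2(1-p-q)\|(u,v)\|_E^2 = 0$. The factor $2(1-p-q)$ is nonzero because $p+q > 2$ under \eqref{H1}, and $\|(u,v)\|_E^2 > 0$ because $(u,v) \in \mathcal{N}$ is nontrivial (indeed, Lemma \ref{l6} gives the quantitative lower bound $\|(u,v)\|_E \geq K_2 > 0$). Hence $\lambda = 0$, and consequently $I'(u,v) = 0$ in $E^{*}$, which concludes the proof. There is no genuine obstacle here; the only point worth emphasizing is that the sign information $P'(u,v)(u,v) < 0$ on $\mathcal{N}$ — the very content of Lemma \ref{l9} — is precisely what forces the multiplier to vanish, so the argument is essentially self-contained from what has already been established.
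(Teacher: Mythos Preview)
Your proof is correct and follows essentially the same approach as the paper: apply the Lagrange multiplier rule to obtain $I'(u,v)=\lambda P'(u,v)$, test against $(u,v)$, and use the computation from Lemma~\ref{l9} showing $P'(u,v)(u,v)\neq 0$ on $\mathcal{N}$ to conclude $\lambda=0$. The only difference is that you spell out a bit more explicitly why each factor is nonzero, which is fine.
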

\begin{proof}
Let $(u, v)$ be a critical point of $I\big\vert_{\mathcal{N}}$, that is, $(u, v)\in \mathcal{N}$ and $(I\big\vert_{\mathcal{N}})'(u, v)=0$. From Lagrange multipliers, there exists $\lambda\in \mathbb{R}$ such that
$
I'(u, v)= \lambda P'(u, v)
$. Then $0=I'(u, v)(u, v)=\lambda P'(u, v)(u, v)$, which implies $\lambda=0$, because $P'(u, v)(u, v)\neq 0$, by Lemma \ref{l9}. Therefore, $I'(u, v)=0$.
\end{proof}

\section{Existence of positive solutions}\label{sec-exist}

In this section we prove Theorem \ref{t3}. We begin recalling a variant of the classical result of Brezis and Lieb \cite{BrezisLieb1983} as presented in \cite[Lemma 2.5]{Moroz2013}.

\begin{lemma}\label{l13}
Let $s\in [1, +\infty)$ and $(u_{n})$ a bounded sequence in $L^{s}(\mathbb{R}^N)$. If $u_{n} \rightarrow u$ a.e. in $\mathbb{R}^N$, then for all $r \in [1, s]$, 
$$
\lim\limits_{n\rightarrow + \infty} \int_{\mathbb{R}^N} |\,|u_{n}|^r -|u_{n} - u|^r -|u|^r|^{\frac{s}{r}} = 0.
$$
\end{lemma}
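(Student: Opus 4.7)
The plan is to follow the classical Brezis--Lieb strategy adapted to the $L^{s/r}$ norm of the pointwise defect. First I would note that $u\in L^{s}(\mathbb{R}^N)$: since $|u_n|^s \to |u|^s$ a.e.\ and $(u_n)$ is bounded in $L^{s}$, Fatou's lemma yields $\int |u|^{s} \leq \liminf_{n}\int|u_n|^{s} < +\infty$. Setting
\[
g_n := |u_n|^{r} - |u_n-u|^{r} - |u|^{r},
\]
the a.e.\ convergence $u_n \to u$ immediately gives $g_n \to 0$ a.e.; the remaining task is to upgrade this to convergence in $L^{s/r}(\mathbb{R}^N)$.

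The key pointwise tool is the elementary inequality: for every $\varepsilon>0$ and $r\geq 1$ there exists $C_\varepsilon>0$ such that $||a+b|^{r}-|a|^{r}-|b|^{r}|\leq \varepsilon|a|^{r}+C_\varepsilon|b|^{r}$ for all $a,b\in\mathbb{R}$ (proved by a standard homogeneity/compactness argument on the unit sphere). Applying it with $a=u_n-u$, $b=u$ gives
\[
|g_n| \leq \varepsilon |u_n-u|^{r} + C_\varepsilon |u|^{r}.
\]
I would then introduce the truncation $W_{n,\varepsilon} := (|g_n| - \varepsilon|u_n-u|^{r})_+$, which satisfies $0\leq W_{n,\varepsilon}\leq C_\varepsilon|u|^{r}$ and $W_{n,\varepsilon}\to 0$ a.e. Since $|u|^{r}\in L^{s/r}(\mathbb{R}^N)$, dominated convergence yields $\int W_{n,\varepsilon}^{s/r} \to 0$ as $n\to\infty$, for each fixed $\varepsilon$.

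Finally, from $|g_n|\leq W_{n,\varepsilon} + \varepsilon|u_n-u|^{r}$ and the triangle inequality in $L^{s/r}(\mathbb{R}^N)$ (valid because $r\in [1,s]$ forces $s/r\geq 1$),
\[
\left(\int |g_n|^{s/r}\right)^{r/s} \leq \left(\int W_{n,\varepsilon}^{s/r}\right)^{r/s} + \varepsilon \left(\int |u_n-u|^{s}\right)^{r/s}.
\]
The last factor is uniformly bounded in $n$ (since $(u_n)$ is bounded in $L^{s}$ and $u\in L^{s}$), so passing first to $\limsup_{n\to\infty}$ and then to $\varepsilon\to 0^{+}$ closes the argument. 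The only mildly delicate point is isolating the ``bad'' part of $g_n$ via the truncation $W_{n,\varepsilon}$ so that the dominated convergence theorem can be applied to a quantity that is pointwise controlled by $C_\varepsilon|u|^{r}$; the remainder is absorbed by the uniform $L^{s}$-boundedness of $u_n-u$ with coefficient $\varepsilon$.
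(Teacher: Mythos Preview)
Your argument is correct and is precisely the classical Brezis--Lieb truncation proof; the paper does not give its own proof of this lemma but simply cites it from \cite[Lemma~2.5]{Moroz2013} (itself a restatement of \cite{BrezisLieb1983}), so your approach coincides with the intended one.
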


We use this lemma to get a similar result involving the Riesz potential and double sequences. Such result, in the scalar version, is proved in \cite[Lemma 2.4]{Moroz2013}; see also \cite[p. 90]{Lions-Hartree}. For the double sequence case, under stronger conditions on the sequences and on $p$ and $q$, a similar result is proved in \cite[Lemma 2.4]{Chen2023a}.

\begin{proposition}\label{l14}
Let $N \geq 1$, $\alpha \in (0, N)$, $p, q \geq 1$, $\theta_1, \theta_2>1$ be such that $\frac{1}{\theta_1}+ \frac{1}{\theta_2} = \frac{N+\alpha}{N}$. Suppose that $(u_n) \subset L^{\theta_1p}(\mathbb{R}^N)$, $(v_n) \subset L^{\theta_2q}(\mathbb{R}^N)$ are bounded sequences such that $u_n \to u$ and $v_n \to v$ almost everywhere in $\mathbb{R}^N$. Then
$$
\lim\limits_{n\rightarrow +\infty} \left[\int_{\mathbb{R}^N} (I_\alpha \ast |u_{n}|^p)|v_{n}|^q dx - \int_{\mathbb{R}^N} (I_\alpha \ast |u_{n} -u|^p)|v_{n} -v|^q dx \right] = \int_{\mathbb{R}^N} (I_\alpha \ast |u|^p)|v|^q dx.
$$
\end{proposition}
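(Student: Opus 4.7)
The plan is to reduce the double-sequence identity to two pieces via add-and-subtract, each of which can be handled by combining the classical Brezis-Lieb lemma (Lemma \ref{l13}) with Hardy-Littlewood-Sobolev estimates (Propositions \ref{t1} and \ref{t2}) and a weak/strong convergence argument. First I apply Lemma \ref{l13} twice: with $s=\theta_1 p$ and $r=p$ to the $L^{\theta_1 p}$-bounded sequence $(u_n)$, and with $s=\theta_2 q$, $r=q$ to $(v_n)$. The admissibility $r\in[1,s]$ follows from $p,q\geq 1$ and $\theta_1,\theta_2>1$, so
$$
|u_n|^p-|u_n-u|^p\to |u|^p \text{ in } L^{\theta_1}(\mathbb{R}^N), \qquad |v_n|^q-|v_n-v|^q\to |v|^q \text{ in } L^{\theta_2}(\mathbb{R}^N).
$$
I then write the left-hand side of the claim as $B_n+C_n$, where
$$
B_n=\int_{\mathbb{R}^N}(I_\alpha\ast(|u_n|^p-|u_n-u|^p))|v_n|^q\,dx, \qquad C_n=\int_{\mathbb{R}^N}(I_\alpha\ast|u_n-u|^p)(|v_n|^q-|v_n-v|^q)\,dx.
$$

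To handle $B_n$, I use Proposition \ref{t1} and the $L^{\theta_2}$-boundedness of $(|v_n|^q)$ to replace the first convolution factor by $|u|^p$ up to an error controlled by $\||u_n|^p-|u_n-u|^p-|u|^p\|_{\theta_1}\to 0$. This reduces the problem to showing $\int(I_\alpha\ast|u|^p)|v_n|^q\,dx\to\int(I_\alpha\ast|u|^p)|v|^q\,dx$. The relation $\tfrac{1}{\theta_1}+\tfrac{1}{\theta_2}=\tfrac{N+\alpha}{N}$ combined with $\theta_2>1$ forces $\theta_1<N/\alpha$, so Proposition \ref{t2} yields $I_\alpha\ast|u|^p\in L^{\theta_2'}(\mathbb{R}^N)$, where $\theta_2'$ is the Hölder conjugate of $\theta_2$ since $\tfrac{1}{\theta_2'}=1-\tfrac{1}{\theta_2}=\tfrac{1}{\theta_1}-\tfrac{\alpha}{N}$. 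The sequence $(|v_n|^q)$ is bounded in $L^{\theta_2}$ and converges a.e. to $|v|^q$, hence weakly in the reflexive space $L^{\theta_2}$, and the weak-strong pairing with the fixed function $I_\alpha\ast|u|^p\in L^{\theta_2'}$ gives the desired limit.

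For $C_n$ I split further as
$$
C_n=\int_{\mathbb{R}^N}(I_\alpha\ast|u_n-u|^p)(|v_n|^q-|v_n-v|^q-|v|^q)\,dx+\int_{\mathbb{R}^N}(I_\alpha\ast|u_n-u|^p)|v|^q\,dx.
$$
The first summand tends to $0$ by Proposition \ref{t1}, using the strong $L^{\theta_2}$ convergence produced above and the $L^{\theta_1}$-boundedness of $(|u_n-u|^p)$. For the second summand, $(|u_n-u|^p)$ is bounded in $L^{\theta_1}$ and converges to $0$ a.e., hence weakly to $0$ in $L^{\theta_1}$; since Proposition \ref{t2} realizes $I_\alpha\ast(\cdot)$ as a bounded linear operator from $L^{\theta_1}$ into $L^{\theta_2'}$, it preserves weak convergence, and pairing with $|v|^q\in L^{\theta_2}$ concludes that this term vanishes. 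Combining the two analyses, $B_n\to\int(I_\alpha\ast|u|^p)|v|^q\,dx$ and $C_n\to 0$, which is exactly the claimed identity. The main technical point to keep straight is the duality $\tfrac{1}{\theta_2'}=\tfrac{1}{\theta_1}-\tfrac{\alpha}{N}$, since it is precisely this identity that places $I_\alpha\ast f$ in the topological dual of $L^{\theta_2}$ and therefore permits the weak-strong arguments to be deployed symmetrically in both $B_n$ and $C_n$.
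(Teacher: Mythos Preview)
Your proof is correct and follows essentially the same strategy as the paper: the identical add-and-subtract decomposition into $B_n$ and $C_n$, the same use of Lemma~\ref{l13} to obtain strong $L^{\theta_1}$ and $L^{\theta_2}$ convergence, and the same weak--strong pairing via Proposition~\ref{t2} and the duality $\tfrac{1}{\theta_2'}=\tfrac{1}{\theta_1}-\tfrac{\alpha}{N}$. The only cosmetic difference is in the treatment of $C_n$: the paper exploits the symmetry $\int (I_\alpha\ast f)g=\int(I_\alpha\ast g)f$ to swap the factors and reapply the $B_n$ argument verbatim, whereas you perform a second additive split and invoke weak continuity of $I_\alpha\ast(\cdot)$ directly; both routes are equally short and yield the same conclusion.
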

\begin{proof}
For each $n\in \mathbb{N}$, write
\begin{align}\label{eq12}
& \int_{\mathbb{R}^N} (I_\alpha \ast |u_{n}|^p)|v_{n}|^q dx - \int_{\mathbb{R}^N} (I_\alpha \ast |u_{n} -u|^p)|v_{n} -v|^q dx = \\ \nonumber
& \int_{\mathbb{R}^N} [I_\alpha \ast (|u_{n}|^p - |u_{n} -u|^p)]|v_{n}|^q dx - \int_{\mathbb{R}^N} [(I_\alpha \ast |u_{n} -u|^p)](|v_{n} -v|^q -|v_{n}|^q) dx.
\end{align}
Lemma \ref{l13} guarantees that $|u_{n}|^p - |u_{n} - u|^p \to |u|^p$ in $L^{\theta_1}(\mathbb{R}^N)$. Also observe that 
$$\frac{1}{\theta_1} = \frac{N+\alpha}{N} - \frac{1}{\theta_2}> \frac{N+\alpha}{N} - 1 = \frac{\alpha}{N}.$$
Then, from \eqref{HLS2}, we infer that $I_\alpha \ast (|u_{n}|^p - |u_{n} -u|^p) \to I_{\alpha}\ast |u|^p$ in $L^{N\theta_1/(N - \alpha\theta_1)}(\mathbb{R}^N)$. On the other hand, $|v_{n}|^q \rightharpoonup |v|^q$ in $L^{\theta_2}(\mathbb{R}^N)$ and
$$
\frac{N-\alpha\theta_1}{N \theta_1} + \frac{1}{\theta_2} = \frac{N-\alpha\theta_1}{N \theta_1} - \frac{1}{\theta_1} + \frac{N+\alpha}{N} =1.
$$
Therefore,
\begin{equation}\label{eqprim}
\int_{\mathbb{R}^N} [I_\alpha \ast (|u_{n}|^p - |u_{n} -u|^p )]|v_{n}|^q dx \to  \int_{\mathbb{R}^N} (I_\alpha \ast |u|^p)|v|^q dx, \ \ \text{as} \ \ n \to \infty.
\end{equation}
Next, observe that
\[
\int_{\mathbb{R}^N} [(I_\alpha \ast |u_{n} -u|^p)](|v_{n} -v|^q -|v_{n}|^q) dx = \int_{\mathbb{R}^N} [I_\alpha \ast (|v_{n} -v|^q -|v_{n}|^q)] |u_{n} -u|^pdx.
\]

Arguing as before, $I_\alpha \ast (|v_{n} -v|^q -|v_{n}|^q) \to I_{\alpha}\ast |v|^q$ in $L^{N\theta_2/(N-\alpha \theta_2)}(\mathbb{R}^N)$ and $|u_n-u|^p \rightharpoonup 0$ in $L^{\theta_1}(\mathbb{R}^N)$, with
\[
\frac{N-\alpha\theta_2}{N \theta_2} + \frac{1}{\theta_1} = 1,
\]
yielding that
\begin{equation}\label{eqseg}
\int_{\mathbb{R}^N} [(I_\alpha \ast u_{n} -u|^p)](|v_{n} -v|^q -|v_{n}|^q) dx  \to 0 \ \ \text{as} \ \ n \to \infty.
\end{equation}

Therefore, the result follows from \eqref{eq12}, \eqref{eqprim} and \eqref{eqseg}.
\end{proof}

\begin{corollary}\label{c4}
Let $N \geq 1$, $\alpha \in (0, N)$, $p, q \geq 1$, $\theta_1, \theta_2>1$ such that $\frac{1}{\theta_1}+ \frac{1}{\theta_2} = \frac{N+\alpha}{N}$. If $u_{n} \rightarrow u$ in $L^{\theta_1 p}(\mathbb{R}^N)$ and $v_{n} \rightharpoonup v$ in $L^{\theta_2 q}(\mathbb{R}^N)$, or $u_{n} \rightharpoonup u$ in $L^{\theta_1 p}(\mathbb{R}^N)$ and $v_{n} \rightarrow v$ in $L^{\theta_2 q}(\mathbb{R}^N)$, then
$$
\lim\limits_{n\rightarrow +\infty} \int_{\mathbb{R}^N} (I_\alpha \ast |u_{n}|^p)|v_{n}|^q dx = \int_{\mathbb{R}^N} (I_\alpha \ast |u|^p)|v|^q dx.
$$
\end{corollary}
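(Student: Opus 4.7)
The plan is to obtain Corollary \ref{c4} as a direct consequence of Proposition \ref{l14}. The two stated alternatives are symmetric under the exchange of the roles of $(u,p)$ and $(v,q)$, since Fubini combined with the radial symmetry of $I_\alpha$ gives $\int_{\mathbb{R}^N}(I_\alpha\ast|u|^p)|v|^q = \int_{\mathbb{R}^N}(I_\alpha\ast|v|^q)|u|^p$. It therefore suffices to treat the first case: $u_n\to u$ strongly in $L^{\theta_1 p}(\mathbb{R}^N)$ and $v_n\rightharpoonup v$ weakly in $L^{\theta_2 q}(\mathbb{R}^N)$. Weakly convergent sequences are norm-bounded, hence both $(u_n)$ and $(v_n)$ are bounded in their respective Lebesgue spaces, supplying the boundedness hypothesis needed by Proposition \ref{l14}.

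The key step is to dispose of the ``correction'' term appearing in the conclusion of Proposition \ref{l14}. By \eqref{HLS1},
\[
0 \,\le\, \int_{\mathbb{R}^N}(I_\alpha\ast|u_n-u|^p)\,|v_n-v|^q \,\le\, K_0\,\|u_n-u\|_{\theta_1 p}^{p}\,\|v_n-v\|_{\theta_2 q}^{q},
\]
and the right-hand side tends to $0$ as $n\to\infty$: the first factor vanishes by strong convergence of $u_n$, while the second is uniformly bounded because $(v_n)$ is. Writing
\[
\int_{\mathbb{R}^N}(I_\alpha\ast|u_n|^p)|v_n|^q = \Bigl[\int_{\mathbb{R}^N}(I_\alpha\ast|u_n|^p)|v_n|^q - \int_{\mathbb{R}^N}(I_\alpha\ast|u_n-u|^p)|v_n-v|^q\Bigr] + \int_{\mathbb{R}^N}(I_\alpha\ast|u_n-u|^p)|v_n-v|^q
\]
and applying Proposition \ref{l14} to the bracketed term would then yield the desired limit $\int_{\mathbb{R}^N}(I_\alpha\ast|u|^p)|v|^q$.

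The main technical point, and the part that deserves care, is the a.e. convergence hypothesis of Proposition \ref{l14}. Strong convergence of $u_n$ in $L^{\theta_1 p}$ produces, up to a subsequence, $u_n\to u$ pointwise a.e., so that side causes no issue. For $v_n$, weak convergence alone does not yield a.e.\ convergence; however, in every context where this corollary is invoked in the present paper, $(v_n)$ arises from a bounded sequence in $H^1(\mathbb{R}^N)$ so that Rellich-Kondrachov (applied to an exhausting family of balls together with a diagonal extraction) delivers a.e.\ convergence along a subsequence. Once both a.e.\ convergences are in hand, the limit $\int_{\mathbb{R}^N}(I_\alpha\ast|u|^p)|v|^q$ is uniquely identified, and a standard subsequence-of-subsequence argument promotes the convergence to the full sequence.
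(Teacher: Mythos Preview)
Your approach mirrors the paper's one-line proof (``direct consequence of Proposition~\ref{l14} and \eqref{HLS1}''), and your use of \eqref{HLS1} to kill the remainder $\int (I_\alpha\ast|u_n-u|^p)|v_n-v|^q$ is exactly the intended step.

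The gap you flag, however, is genuine and cannot be closed under the stated hypotheses: weak convergence $v_n\rightharpoonup v$ in $L^{\theta_2 q}$ neither implies $|v_n|^q\rightharpoonup|v|^q$ in $L^{\theta_2}$ nor produces an a.e.\ convergent subsequence. Your $H^1$/Rellich patch rescues any application in which the weakly convergent sequence comes from an $H^1$-bounded family, but it does not prove the corollary as written. In fact the statement is false without an a.e.\ hypothesis: take $u_n\equiv u=\chi_{B_1}$ and $v_n(x)=\operatorname{sign}(\sin(nx_1))\,\chi_{B_1}(x)$; then $v_n\rightharpoonup 0$ in every $L^r$, yet $|v_n|^q=\chi_{B_1}$ for all $n$, so $\int(I_\alpha\ast|u|^p)|v_n|^q$ is a fixed positive constant while the claimed limit is zero. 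The paper's one-liner glosses over this; the honest repair is to add a.e.\ convergence of the weakly convergent sequence to the hypotheses, after which your argument via Proposition~\ref{l14} together with the subsequence-of-subsequence step is complete.
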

\begin{proof}
It is a direct consequence of Proposition \ref{l14} and \eqref{HLS1}.
\end{proof}

Next we recall some useful results on Schwarz symmetrization, which can be found in \cite[Section 3 in Chapter 6]{Kavian1993} and \cite[Chapter 3]{Lieb2001}. A measurable function $u:\mathbb{R}^N \to \mathbb{R}$ is said to vanish at infinity, if $\textrm{meas}(\{ x: |u(x)|>t\})$ is finite for all  $t>0$. Given a measurable function $u:\mathbb{R}^N \to \mathbb{R}$ vanishing at infinity, we denote by $u^\ast$ its Schwarz symmetrization (symmetric decreasing rearrangement).

\begin{proposition}\label{t8} Let $G:[0, +\infty)\rightarrow [0, +\infty)$ be a continuous increasing function such that $G(0)=0$.
\begin{enumerate}[(a)]
\item If $u:\mathbb{R}^N \to \mathbb{R}$ is nonnegative, measurable and vanishes at infinity, then 
$$
\int_{\mathbb{R}^N} G(u^\ast(x)) dx = \int_{\mathbb{R}^N} G(u(x)) dx.
$$
\item If $u \in L^r(\mathbb{R}^N)$, with $1 \leq r < \infty$, is a nonnegative function, then  $\|u^\ast\|_r = \|u\|_r$.
\item If $u\in H^{1}(\mathbb{R}^N)$ is a nonnegative function, then $u^\ast \in H^{1}(\mathbb{R}^N)$ and
$
\|\nabla u^\ast\|_{2}^{2} \leq \|\nabla u\|_{2}^{2}.
$
\item Let $f, g, h: \mathbb{R^N} \to \mathbb{R}$ be nonnegative measurable functions vanishing at infinite and set
\[
J(f, g, h) = \int_{\mathbb{R}^N} \int_{\mathbb{R}^N} f(x)g(x-y)h(y) dx dy.
\]
Then,
\[
J(f, g, h) \leq J(f^\ast, g^\ast, h^\ast).
\]
\end{enumerate}

\end{proposition}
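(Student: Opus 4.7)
The four assertions are classical results on the Schwarz rearrangement, and the plan is to reduce each of them to the fundamental principle of \emph{equimeasurability}: by construction $\mu_u(t) := |\{x \in \mathbb{R}^N : u(x) > t\}|$ coincides with $\mu_{u^\ast}(t)$ for every $t>0$. For the actual write-up I would simply cite \cite[Section 3 in Chapter 6]{Kavian1993} and \cite[Chapter 3]{Lieb2001}, but here is the route I would take.

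For (a), I would use the layer-cake formula. Writing $G(u(x))=\int_{0}^{u(x)}dG(t)$, where $dG$ is the Lebesgue–Stieltjes measure associated with the continuous increasing $G$ with $G(0)=0$, Fubini gives
\[
\int_{\mathbb{R}^N} G(u(x))\,dx = \int_0^{\infty} \mu_u(t)\,dG(t),
\]
and the same identity applied to $u^{\ast}$, combined with $\mu_{u^{\ast}}=\mu_u$, yields (a). Part (b) is then immediate by specializing to $G(t)=t^r$ and taking $r$-th roots.

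For (c), which is the Pólya–Szegő inequality, I would first reduce to $u\in C_c^{\infty}(\mathbb{R}^N)$ by density, and then combine the coarea formula
\[
\int_{\mathbb{R}^N} |\nabla u|^{2}\,dx = \int_0^{\infty}\!\!\int_{\{u=t\}}|\nabla u|\,d\mathcal{H}^{N-1}\,dt
\]
(after a Cauchy–Schwarz step on each level surface) with the Euclidean isoperimetric inequality applied to the superlevel sets $\{u>t\}$ and $\{u^{\ast}>t\}$, together with equimeasurability (so that these two sets have the same Lebesgue measure for a.e. $t$). Comparing the Dirichlet integrals level by level gives $\|\nabla u^\ast\|_{2}\le \|\nabla u\|_{2}$. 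A technically smoother alternative I might prefer uses the heat-semigroup identity $\|\nabla u\|_{2}^{2}=\lim_{t\to 0^{+}}t^{-1}\langle u-e^{t\Delta}u,\,u\rangle$ and then applies the convolution rearrangement inequality of (d) with the symmetric decreasing heat kernel.

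For (d), Riesz's rearrangement inequality, I would use Steiner symmetrization. For a hyperplane $H\subset\mathbb{R}^N$, let $f^{\sharp H}$ denote the one-dimensional symmetric decreasing rearrangement of $f$ along each line perpendicular to $H$. By Fubini, the inequality $J(f,g,h)\le J(f^{\sharp H},g^{\sharp H},h^{\sharp H})$ reduces to the one-dimensional Riesz inequality on $\mathbb{R}$, proved by a three-sets rearrangement argument on intervals. Iterating Steiner symmetrizations along a suitably dense family of directions and passing to the $L^p$-limit (which is known to equal the Schwarz rearrangement) yields (d). The main obstacle is here: the one-dimensional Riesz inequality (the base case) and the $L^p$-convergence of iterated Steiner symmetrizations to the Schwarz rearrangement are the substantive classical inputs, and for the paper I would rely on the cited references rather than reprove them from scratch.
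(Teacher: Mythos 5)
Your proposal is correct and aligns with the paper: the paper itself offers no proof of Proposition \ref{t8}, instead stating that these results ``can be found in'' \cite[Section 3 in Chapter 6]{Kavian1993} and \cite[Chapter 3]{Lieb2001}, which is exactly what you propose to do, and the sketches you give (layer-cake for (a)--(b), P\'olya--Szeg\H{o} via coarea/isoperimetry or the heat-semigroup argument for (c), and Riesz's inequality via iterated Steiner symmetrization for (d)) are the standard arguments appearing in those references.
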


\medbreak

\begin{proof}[\bf{Proof of Theorem \ref{t3}}] Here we borrow some ideas from the proof of \cite[Proposition 2.2]{Moroz2013}. For $(u,v) \in E$ with $u \neq 0$ and $v \neq 0$, set
\[
S_{\alpha, p,q} (u,v) = \frac{\|(u,v)\|_E^2}{\left(2\int_{\mathbb{R}^N} (I_{\alpha} \ast |u|^p)|v|^q \right)^{\frac{2}{p+q}}}.
\]

Let $(u_{n}, v_{n})\subset \mathcal{N}$ such that $I(u_{n}, v_{n})\rightarrow c_\mathcal{N}$. First of all,  observe that $P(u, v) =  P(|u|, |v|)$ and $I(u, v)=I(|u|, |v|)$, for all $(u,v) \in E$. Thus, without loss of generality, we may assume that $u_{n} \geq 0$ and $v_{n} \geq 0$ for all $n\in \mathbb{N}$. 

For each $n\in \mathbb{N}$, consider the unique $t_{n}^{\ast}>0$ such that $(t_{n}^{\ast}u_{n}^{\ast}, t_{n}^{\ast}v_{n}^{\ast})\in \mathcal{N}$. From Proposition \ref{t8}, $I(t_{n}^{\ast}u_{n}^{\ast}, t_{n}^{\ast}v_{n}^{\ast}) \leq I(t_{n}^{\ast}u_{n}, t_{n}^{\ast}v_{n})$ and from Lemma \ref{l3}, $I(t_{n}^{\ast}u_{n}, t_{n}^{\ast}v_{n})\leq I(u_{n}, v_{n})$, for all $n\in \mathbb{N}$. Consequently, besides being nonnegative, we may also assume that $u_{n}, v_{n}$ are radially symmetric and radially decreasing for all $n \in \mathbb{N}$.

\noindent \textbf{Claim 1.} The sequence $(u_{n}, v_{n})$ is bounded in $E$.

It follows from 
\[
c_\mathcal{N} + o(1) = I(u_{n}, v_{n}) - \dfrac{1}{p+q}P(u_{n}, v_{n}) = \left(\dfrac{1}{2}-\dfrac{1}{p+q}\right)\|(u_{n}, v_{n})\|_{E}^{2}, \quad \text{as }\ \ n \to \infty.
\]
and the fact that $p+q>2$.

\medbreak

Since $H_{rad}^{1}(\mathbb{R}^N)$ is a weakly closed subspace of $H^{1}(\mathbb{R}^N)$, there exist $u,v \in H^1_{rad}(\mathbb{R}^N)$ such that, up to a subsequence, $u_{n} \rightharpoonup u$ and $v_{n} \rightharpoonup v$ in $H^{1}(\mathbb{R}^N)$ and almost everywhere in $\mathbb{R^N}$. 

\medbreak

\noindent \textbf{Claim 2.} The week limits are such that $u\neq0$ and $v \neq0$.

First, from Lemma \ref{l6}, eq. \eqref{eq5} and \cite[Lemma 2.3]{Moroz2013} (see also \cite[Lemma I.1]{Lions1985II}), observe that
\begin{align*}
   K^2_2 &\leq \|(u_{n}, v_{n})\|_{E}^{2} = 2 \int_{\mathbb{R}^N} (I_\alpha \ast |u_{n}|^p)|v_{n}|^q dx \leq C \|u_n\|_{\theta_1 p}^{p}\|v_n\|_{\theta_2 q}^{q}\\ & \leq  C \left( \left(\int_{B_1} |u_n|^{\theta_1p}\right)^{1- \frac{2}{\theta_1p}} \|u_n\|^2\right)^{1/\theta_1}\left( \left(\int_{B_1} |v_n|^{\theta_2q}\right)^{1- \frac{2}{\theta_2q}} \|v_n\|^2\right)^{1/\theta_2}.
\end{align*}
From this inequality, since $(u_n)$ and $(v_n)$ are bounded in $H^1(\mathbb{R})$, we infer that 
\[
\inf_n \int_{B_1}|u_n|^{\theta_1p} >0 \ \ \text{ and } \ \ \inf_n \int_{B_1}|v_n|^{\theta_2q} >0.
\]
Then, from the compact embeddings $H^1(B_1) \subset L^{\theta_1p}(B_1)$ and $H^1(B_1) \subset L^{\theta_2q}(B_2)$, we conclude that $u \neq 0$ and $v \neq 0$.
\medbreak

From Lemma \ref{l3}, let $\overline{t}>0$ be such that $(\overline{t}u, \overline{t}v) \in \mathcal{N}$. Then, from the explicit formulas in Lemma \ref{l3} (ii) and (iii), with $k_{p,q} = \frac{1}{2} - \frac{1}{p+q}$ we infer that
\[
c_{\mathcal{N}}  \leq I(\overline{t}u, \overline{t}v) = k_{p,q} \left[ \frac{\|(u,v)\|_E^2}{\left(2\int_{\mathbb{R}^N} (I_{\alpha} \ast |u|^p)|v|^q \right)^{\frac{2}{p+q}}}\right]^{\frac{p+q}{p+q-2}} = k_{p,q} \left[S_{\alpha, p,q}(u,v)\right]^{\frac{p+q}{p+q-2}}.
\]

From this inequality, the classical Brezis-Lieb result relating the norms $\|(u_n, v_n)\|_{E}^2$ and $\|(u,v)\|_E^2$ and Lemma \ref{l14}, we infer that
\begin{align}
    c_{\mathcal{N}} &\leq k_{p,q}\lim_{n \to \infty}\left[ S_{\alpha,p,q}(u_n,v_n) \left(\frac{\int_{\mathbb{R}^N} (I_{\alpha} \ast |u_n|^p)|v_n|^q}{2\int_{\mathbb{R}^N} (I_{\alpha} \ast |u|^p)|v|^q}  \right)^{\frac{2}{p+q}} \right. \vspace{10pt} \nonumber\\ & \hspace{1cm} \left.- S_{\alpha,p,q}(u_n-u, v_n-v)\left(\frac{\int_{\mathbb{R}^N} (I_{\alpha} \ast |u_n-u|^p)|v_n-v|^q}{2\int_{\mathbb{R}^N} (I_{\alpha} \ast |u|^p)|v|^q}  \right)^{\frac{2}{p+q}}\right]^{\frac{p+q}{p+q-2}} \vspace{10pt}\nonumber\\
    & \leq c_{\mathcal{N}} \liminf_{n \to \infty} \left[\left(\frac{\int_{\mathbb{R}^N} (I_{\alpha} \ast |u_n|^p)|v_n|^q}{2\int_{\mathbb{R}^N} (I_{\alpha} \ast |u|^p)|v|^q}  \right)^{\frac{2}{p+q}} -  \left(\frac{\int_{\mathbb{R}^N} (I_{\alpha} \ast |u_n-u|^p)|v_n-v|^q}{2\int_{\mathbb{R}^N} (I_{\alpha} \ast |u|^p)|v|^q}  \right)^{\frac{2}{p+q}}\right]^{\frac{p+q}{p+q-2}}.\label{novaeqseq}
\end{align}
Since $p+q>2$, by Lemma \ref{l14}, if $\displaystyle\int_{\mathbb{R}^N} (I_{\alpha}\ast |u|^p) |v|^q < \liminf_{n \to \infty}\int_{\mathbb{R}^N} (I_{\alpha}\ast |u_n|^p) |v_n|^q$, then 
\[
\liminf_{n \to \infty}\left(\frac{\int_{\mathbb{R}^N} (I_{\alpha} \ast |u_n|^p)|v_n|^q}{2\int_{\mathbb{R}^N} (I_{\alpha} \ast |u|^p)|v|^q}  \right)^{\frac{2}{p+q}} -  \left(\frac{\int_{\mathbb{R}^N} (I_{\alpha} \ast |u_n-u|^p)|v_n-v|^q}{2\int_{\mathbb{R}^N} (I_{\alpha} \ast |u|^p)|v|^q}  \right)^{\frac{2}{p+q}}< 1,
\]
which would lead \eqref{novaeqseq} to a contradiction.

Therefore,
\begin{equation}\label{importante}
\|(u, v)\|_{E}^{2} \leq \liminf\limits_{n\rightarrow +\infty} \|(u_{n}, v_{n})\|_{E}^{2} = 2 \liminf\limits_{n\rightarrow +\infty} \int_{\mathbb{R}^N} (I_\alpha \ast |u_{n}|^p)|v_{n}|^q dx = 2 \int_{\mathbb{R}^N} (I_\alpha \ast |u|^p)|v|^q dx.
\end{equation}

Thus, from \eqref{importante} and Lemma \ref{l3}, we conclude that $\overline{t} \leq 1$ and
\begin{align}
c_\mathcal{N} & \leq I(\overline{t} u, \overline{t} v) = \left(\dfrac{1}{2}-\dfrac{1}{p+q}\right)\overline{t}^{ \, p+q} \int_{\mathbb{R}^N} (I_\alpha \ast |u|^p)|v|^q dx \\ \nonumber
& \leq \left(\dfrac{1}{2}-\dfrac{1}{p+q}\right)\int_{\mathbb{R}^N} (I_\alpha \ast |u|^p)|v|^q dx \\ \nonumber
& = \liminf\limits_{n\rightarrow +\infty} \left(\dfrac{1}{2}-\dfrac{1}{p+q}\right) \int_{\mathbb{R}^N} (I_\alpha \ast |u_{n}|^p)|v_{n}|^q dx = \liminf\limits_{n\rightarrow +\infty} I(u_{n}, v_{n}) = c_\mathcal{N}. 
\end{align}
Consequently, $\overline{t} =1$ and $I(u, v)=c_\mathcal{N}$, with $u\geq 0$ and $v\geq 0$. Then, by the maximum principle, e.g. as in \cite[Proposition 2.1]{Van2008}, we infer that $u>0$ and $v>0$ almost everywhere in $\mathbb{R}^N$.

\medbreak
If $(u, v)$ is a ground state solution of \eqref{P} so is $(|u|, |v|)$, and the result follows from the maximum principle, again as in \cite[Proposition 2.1]{Van2008}.
\end{proof}

\section{Symmetry of positive ground state solutions}\label{sec-symm}

In this section we prove that any positive ground state solution is radial, namely Theorem \ref{t5}. To this aim, we use some arguments based on polarization as in \cite{Brock1999, Moroz2013}. 

Let $H\subset \mathbb{R}^N$ be a closed half-space and $\sigma_H$ be the reflection with respect to $\partial H$. We define the polarization of a function $u:\mathbb{R}^N \rightarrow \mathbb{R}$, denoted by $u^H : \mathbb{R}^N \rightarrow \mathbb{R}$, as
$$
u^H(x)=\left\{\begin{array}{ll}
\max\{u(x), u(\sigma_H(x))\}, \mbox{ \ \ if } x\in H, \\
\min\{u(x), u(\sigma_H(x))\}, \mbox{ \ \ \ if } x\notin H.
\end{array}\right.
$$ 
We start with the following characterization of a radial function, as presented in \cite[Lemma 5.4]{Moroz2013}.

\begin{lemma}\label{l15}
Let $s\geq 1$ and $u\in L^{s}(\mathbb{R}^N)$. If $u\geq 0$ and for every closed half-space $H\subset \mathbb{R}^N$, $u^H = u$ or $u^H = u \circ \sigma_H$, then there exists $x_0\in \mathbb{R}^N$ and $v:(0, +\infty)\rightarrow \mathbb{R}$ a nondecreasing function such that for almost every $x\in \mathbb{R}^N$, $u(x)=v(|x-x_0|)$.
\end{lemma}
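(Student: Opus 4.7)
The plan is to reduce the statement to one about super-level sets of $u$. Since $u \in L^s(\mathbb{R}^N)$ and $u \geq 0$, the function $u$ vanishes at infinity, so for every $t > 0$ the set $A_t := \{x \in \mathbb{R}^N : u(x) > t\}$ has finite Lebesgue measure. My first step is to translate the polarization dichotomy at the level of $u$ into a dichotomy at the level of each $A_t$: for a closed half-space $H$, the condition $u^H = u$ is equivalent to $(A_t)^H = A_t$ up to a null set for every $t > 0$, while $u^H = u \circ \sigma_H$ is equivalent to $(A_t)^H = \sigma_H(A_t)$ up to a null set. Hence each super-level set inherits, for every $H$, the property of being (up to a null set) either its own $H$-polarization or the $\sigma_H$-reflection of it.

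Next I would establish a purely geometric classification: any measurable set $A \subset \mathbb{R}^N$ of finite positive measure that satisfies $A^H = A$ or $A^H = \sigma_H(A)$ (up to null sets) for every closed half-space $H$ must agree, up to a null set, with an open ball $B_{r}(y)$ for some center $y \in \mathbb{R}^N$ and radius $r > 0$. The core idea is a contradiction argument: if $A$ is not a ball (up to a null set), consider the map that sends a parameter $\lambda \in \mathbb{R}$ to $|A \triangle A^{H_\lambda}|$, where $H_\lambda$ is the translate of a fixed half-space along its normal. This map is continuous and, at the two extremes, it forces $A^{H_\lambda}$ to be either $A$ or $\sigma_{H_\lambda}(A)$; an intermediate-value argument then produces a half-space for which neither alternative in the dichotomy can hold. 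Applied to each $A_t$, this yields, up to null sets, representatives $A_t = B_{r(t)}(x_t)$ for some $x_t \in \mathbb{R}^N$ and $r(t) > 0$.

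Finally, I would show that the center $x_t$ is independent of $t$. For $0 < t_1 < t_2$, the nesting $A_{t_2} \subseteq A_{t_1}$ forces $B_{r(t_2)}(x_{t_2}) \subseteq B_{r(t_1)}(x_{t_1})$ up to null sets; if the two centers were distinct, one could select a closed half-space $H$ whose boundary strictly separates $x_{t_1}$ from $x_{t_2}$ and then check that the dichotomy $u^H \in \{u, u \circ \sigma_H\}$ cannot be simultaneously compatible with $A_{t_1}$ and $A_{t_2}$. Hence all centers coincide at a common point $x_0$, the map $t \mapsto r(t)$ is monotone, and defining $v$ on $(0,\infty)$ as a suitable (generalized) inverse of $r$ gives $u(x) = v(|x - x_0|)$ almost everywhere in $\mathbb{R}^N$, with $v$ monotone of the prescribed type. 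The main obstacle will be the geometric set-classification step; once it is available, transferring back to $u$ via the layer-cake representation and arranging the common center are essentially organizational matters.
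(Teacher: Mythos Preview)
The paper does not prove this lemma at all; it simply quotes it from \cite[Lemma~5.4]{Moroz2013} and uses it as a black box. So there is no in-paper argument to compare against, only the one in the cited reference.

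Your level-set strategy is a legitimate route, but the geometric classification step contains a logical slip. The intermediate-value argument along the family $H_\lambda$ does \emph{not} produce a half-space for which ``neither alternative can hold''. What it actually gives is the opposite: setting $\Lambda_1=\{\lambda: A^{H_\lambda}=A\}$ and $\Lambda_2=\{\lambda: A^{H_\lambda}=\sigma_{H_\lambda}(A)\}$, both sets are closed (by $L^1$-continuity of polarization under translation of $H$), their union is $\mathbb{R}$ by hypothesis, and each is nonempty from the two extremes you describe. Connectedness of $\mathbb{R}$ then forces $\Lambda_1\cap\Lambda_2\neq\emptyset$, i.e.\ there is a $\lambda^\ast$ with $A=\sigma_{H_{\lambda^\ast}}(A)$: a hyperplane of \emph{symmetry}. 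This is not a contradiction with ``$A$ is not a ball'', so your sketch stops short of the conclusion. To finish, you still have to combine the hyperplane symmetries obtained in every direction, together with the monotonicity encoded in $A^{H_\lambda}=A$ for $\lambda$ on one side of $\lambda^\ast$, to pin down a single center and force $A$ to be a ball; that is where the real work sits, and your outline does not address it.

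The argument in \cite{Moroz2013} (building on \cite{Van2008}) runs this same closed/closed/connected scheme \emph{directly on $u$} in $L^s(\mathbb{R}^N)$, without passing to level sets: for each direction one obtains a reflection hyperplane for $u$ plus monotonicity of $u$ on either side, and then shows the hyperplanes meet at a common point. Working with $u$ itself has the advantage that the center is produced once for the function, so your separate ``common center across all $t$'' step disappears. Your plan can be made to work, but you should expect the set-level classification to require essentially the same analysis as the direct approach on $u$, and you need to correct the role of the intermediate-value step: it yields symmetry, not a violation of the dichotomy.
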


That said, to prove Theorem \ref{t5}, we show that any (positive) ground state solution $(u,v)$ satisfies the hypotheses in Lemma \ref{l15} and that the center of symmetry $x_0$ is the same for both components $u$ and $v$.

\begin{lemma}\label{l16}
Let $N\geq1$, $\theta_1, \theta_2 > 1$, $\alpha\in(0,N)$ be such that $\frac{1}{\theta_1} + \frac{N-\alpha}{N} + \frac{1}{\theta_2} = 2$. Consider $f\in L^{\theta_1}(\mathbb{R}^N)$, $g \in L^{\theta_2}(\mathbb{R}^N)$ and $H\subset \mathbb{R}^N$ any closed half-space. If $f> 0$, $g > 0$ a.e. in $\mathbb{R^N}$ and
$$
\int_{\mathbb{R}^N} \int_{\mathbb{R}^N} \dfrac{f(x)g(y)}{|x-y|^{N-\alpha}} dx dy \geq \int_{\mathbb{R}^N} \int_{\mathbb{R}^N} \dfrac{f^H(x)g^H(y)}{|x-y|^{N-\alpha}} dx dy, 
$$
then either $u^H = u$ and $v^H = v$ or $u^H = u\circ \sigma_H $ and $v^H = v\circ \sigma_H$.
\end{lemma}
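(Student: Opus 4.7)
The plan is to reduce the two-sided inequality to a pointwise rearrangement identity on the orbits of the reflection $\sigma_H$, and then extract the rigidity. Write $x' = \sigma_H(x)$, $y' = \sigma_H(y)$, and choose coordinates so that $H = \{z_N \geq 0\}$; then for $x, y \in H$,
\[
|x-y'|^2 - |x-y|^2 = 4 x_N y_N \geq 0,
\]
with strict inequality for a.e. $(x,y) \in H\times H$. Set $K_1(x,y) = |x-y|^{-(N-\alpha)}$ and $K_2(x,y) = |x-y'|^{-(N-\alpha)} = |x'-y|^{-(N-\alpha)}$; thus $K_1 \geq K_2$, strictly almost everywhere in $H\times H$.

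The first step is to partition $\mathbb{R}^N\times\mathbb{R}^N$ into the $\sigma_H\times\sigma_H$-orbits parametrized by $(x,y)\in H\times H$. For each such orbit, writing $a=f(x)$, $b=f(x')$, $c=g(y)$, $d=g(y')$, the contribution to $\iint fg K$ is $(ac+bd)K_1 + (ad+bc)K_2$, while the contribution to $\iint f^H g^H K$ is $(AC+BD)K_1 + (AD+BC)K_2$, where $A=\max(a,b)$, $B=\min(a,b)$, $C=\max(c,d)$, $D=\min(c,d)$. Since $(a+b)(c+d)=(A+B)(C+D)$, the differences satisfy $AC+BD-ac-bd = ad+bc-AD-BC =: \Delta(x,y) \geq 0$ (an elementary rearrangement identity; one checks $\Delta=(|a-b|)(|c-d|)\cdot\mathbf{1}_{\text{anti-aligned}}$). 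Hence
\[
\int_{\mathbb{R}^N}\!\!\int_{\mathbb{R}^N}\frac{f^H(x)g^H(y)-f(x)g(y)}{|x-y|^{N-\alpha}}\,dx\,dy = \int_{H}\!\!\int_{H}\Delta(x,y)\bigl(K_1(x,y)-K_2(x,y)\bigr)\,dx\,dy \geq 0.
\]
The Hardy--Littlewood--Sobolev inequality \eqref{HLS1} and the scaling hypothesis $\tfrac{1}{\theta_1}+\tfrac{N-\alpha}{N}+\tfrac{1}{\theta_2}=2$, together with $\|f^H\|_{\theta_1}=\|f\|_{\theta_1}$ and $\|g^H\|_{\theta_2}=\|g\|_{\theta_2}$, ensure all integrals are finite, so the manipulation above is legitimate.

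Combining this with the hypothesis forces equality, hence $\Delta(x,y)(K_1-K_2)=0$ a.e.\ in $H\times H$. Since $K_1>K_2$ a.e.\ in $H\times H$, we deduce $\Delta(x,y)=0$ for a.e.\ $(x,y)\in H\times H$, which is equivalent to
\[
\bigl(f(x)-f(\sigma_H(x))\bigr)\bigl(g(y)-g(\sigma_H(y))\bigr) \geq 0 \quad \text{for a.e.\ } (x,y)\in H\times H.
\]
Set $F(x) = f(x)-f(\sigma_H(x))$ and $G(y) = g(y)-g(\sigma_H(y))$ on $H$. By Fubini, the final step is a standard tensor-sign dichotomy: if $F^+$ and $G^-$ both had positive measure supports, then $FG<0$ on a product set of positive measure, contradicting the above. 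Hence either $F\geq 0$ a.e.\ and $G\geq 0$ a.e., or $F\leq 0$ a.e.\ and $G\leq 0$ a.e.\ (the degenerate cases $F\equiv 0$ or $G\equiv 0$ being compatible with either alternative). In the first case $f^H=f$ and $g^H=g$; in the second $f^H = f\circ\sigma_H$ and $g^H = g\circ\sigma_H$, yielding the claimed alternative.

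The main obstacle is the orbit bookkeeping in the second step, i.e.\ verifying the rearrangement identity $\Delta\geq 0$ with the correct pairing of $K_1$ and $K_2$. Once that is in place, the equality case combined with the strict pointwise inequality $K_1>K_2$ a.e.\ in $H\times H$, together with the elementary tensor-sign argument, delivers the rigidity and hence the dichotomy.
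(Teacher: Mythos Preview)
Your argument follows the same route as the paper's: split the double integral over $\sigma_H$-orbits indexed by $H\times H$, establish the pointwise rearrangement inequality (the paper's \eqref{despontualref}, your $\Delta\,(K_1-K_2)\ge 0$), invoke \eqref{HLS1} for finiteness, and deduce from equality together with $K_1>K_2$ a.e.\ that $\Delta=0$ a.e.\ on $H\times H$. The paper then introduces the sets $\Omega_1,\dots,\Omega_4$ and simply asserts the dichotomy, while you make the product-sign argument explicit via $F(x)G(y)\ge 0$ a.e.; the content is identical, and your exposition is in fact the cleaner of the two.

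There is, however, a genuine gap in your final tensor-sign step, and it is also present (though hidden behind the bare assertion) in the paper's proof. Your parenthetical that the degenerate cases $F\equiv 0$ or $G\equiv 0$ are ``compatible with either alternative'' does not close the argument: if, say, $G\equiv 0$ a.e.\ on $H$ (that is, $g=g\circ\sigma_H$), then indeed $g^H=g=g\circ\sigma_H$, but the condition $F(x)G(y)\ge 0$ a.e.\ now places \emph{no} restriction on $F$, so $F$ may change sign on sets of positive measure, and then neither $f^H=f$ nor $f^H=f\circ\sigma_H$ holds. Concretely, take any positive $\sigma_H$-symmetric $g\in L^{\theta_2}(\mathbb{R}^N)$ and any positive $f\in L^{\theta_1}(\mathbb{R}^N)$ for which both $\{x\in H: f(x)>f(\sigma_H(x))\}$ and $\{x\in H: f(x)<f(\sigma_H(x))\}$ have positive measure; since $I_\alpha\ast g$ is then $\sigma_H$-symmetric and $(f^H-f)+(f^H-f)\circ\sigma_H\equiv 0$, the integral hypothesis holds with equality, yet the stated dichotomy fails. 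Thus the lemma, as a stand-alone statement for a fixed half-space $H$, is not quite true without excluding this degenerate situation. For the downstream application to Theorem~\ref{t5}, where the hypothesis is available for \emph{every} $H$, one can argue around the symmetric hyperplanes, but your proof of Lemma~\ref{l16} as stated needs to address this case explicitly.
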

\begin{proof}
First of all, observe that both integrals are finite by \eqref{HLS1}. Moreover, we can write
\begin{align*}
\int_{\mathbb{R}^N} \int_{\mathbb{R}^N} \dfrac{f(x)g(y)}{|x-y|^{N-\alpha}} dx dy &= \int_{H} \int_{H} \dfrac{f(x)g(y)+f(\sigma_H(x))g(\sigma_H(y))}{|x-y|^{N-\alpha}} dx dy  \\ & +\int_{H} \int_{H}\dfrac{f(x)g(\sigma_H(y))+f(\sigma_H(x))g(y)}{|x-\sigma_H(y)|^{N-\alpha}} dx dy
\end{align*}
and
\begin{align*}
\int_{\mathbb{R}^N} \int_{\mathbb{R}^N} \dfrac{f^H(x)g^H(y)}{|x-y|^{N-\alpha}} dx dy &= \int_{H} \int_{H} \dfrac{f^H(x)g^H(y)+f^H(\sigma_H(x))g^H(\sigma_H(y))}{|x-y|^{N-\alpha}}dx dy\\& + \int_{H} \int_{H}\dfrac{f^H(x)g^H(\sigma_H(y))+f^H(\sigma_H(x))g^H(y)}{|x-\sigma_H(y)|^{N-\alpha}} dx dy.
\end{align*}

On the other hand, for all $x, y\in H$, for $u, v \geq 0$, $\alpha < N$ and the definition of $u^H$ and $v^H$, we have
\begin{align}\label{despontualref}
& \dfrac{f(x)g(y)+f(\sigma_H(x))g(\sigma_H(y))}{|x-y|^{N-\alpha}} + \dfrac{f(x)g(\sigma_H(y))+f(\sigma_H(x))g(y)}{|x-\sigma_H(y)|^{N-\alpha}} \\
& \leq \dfrac{f^H(x)g^H(y)+f^H(\sigma_H(x))g^H(\sigma_H(y))}{|x-y|^{N-\alpha}} + \dfrac{f^H(x)g^H(\sigma_H(y))+f^H(\sigma_H(x))g^H(y)}{|x-\sigma_H(y)|^{N-\alpha}} \nonumber,
\end{align}
thanks to $|x-y| \leq |x-\sigma_H(y)|$ for all $x,y \in H$.
Thus, from hypothesis, the equality must hold a.e. in \eqref{despontualref}. Considering the sets $\Omega_1 = \{ x\in H \ ; u^H(x) = u(x)\}$, $\Omega_2 = \{ x\in H \ ; u^H(x) = u(\sigma_H(x))\}$, $\Omega_3 = \{ y\in H \ ; v^H(y) = v(y)\}$ and $\Omega_4 = \{ y\in H \ ; v^H(y) = v(\sigma_H(y))\}$, it follows that the equality holds in \eqref{despontualref} a.e if, and only if, either $u^H = u$ and $v^H = v$ or $u^H = u\circ \sigma_H $ and $v^H = v\circ \sigma_H$.
\end{proof}

\begin{corollary}\label{c5}
Let $f$ and $g$ be as in the conditions of Lemma \ref{l16}. Then, $f$ and $g$ are radial functions with a common center of symmetry.
\end{corollary}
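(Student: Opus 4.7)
The approach is to apply Lemma \ref{l15} separately to $f$ and to $g$ to obtain candidate centers of radial symmetry $x_0$ and $y_0$, and then to use the dichotomy provided by Lemma \ref{l16} on a half-space that separates $x_0$ and $y_0$ to force $x_0 = y_0$. The first step is immediate: by the conclusion of Lemma \ref{l16}, for every closed half-space $H \subset \mathbb{R}^N$ one has $f^H \in \{f, f \circ \sigma_H\}$ and $g^H \in \{g, g \circ \sigma_H\}$. Hence Lemma \ref{l15}, applied in turn to $f$ and to $g$ (both positive, in $L^{\theta_1}(\mathbb{R}^N)$ and $L^{\theta_2}(\mathbb{R}^N)$ respectively), produces $x_0, y_0 \in \mathbb{R}^N$ and monotone (nonincreasing) radial profiles $\phi, \psi$ with $f(\cdot) = \phi(|\cdot - x_0|)$ and $g(\cdot) = \psi(|\cdot - y_0|)$ almost everywhere.

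Suppose, toward a contradiction, that $x_0 \neq y_0$. Pick a hyperplane $\Pi$ perpendicular to the segment from $x_0$ to $y_0$ passing through its midpoint, and let $H$ be the closed half-space bounded by $\Pi$ with $x_0 \in \mathrm{int}(H)$ and $y_0 \in \mathrm{int}(\mathbb{R}^N \setminus H)$. Since $\sigma_H$ is an isometry satisfying $\sigma_H \circ \sigma_H = \mathrm{id}$, one has $|\sigma_H(x) - x_0| = |x - \sigma_H(x_0)|$, and the perpendicular bisector of $\{x_0, \sigma_H(x_0)\}$ is precisely $\Pi$. Therefore every $x \in \mathrm{int}(H)$ satisfies $|x - x_0| < |x - \sigma_H(x_0)| = |\sigma_H(x) - x_0|$, and since $\phi$ is nonincreasing we obtain $f(x) \geq f(\sigma_H(x))$, whence $f^H = f$ a.e. The symmetric computation applied to $g$, which is radial around $y_0 \in \mathrm{int}(\mathbb{R}^N \setminus H)$, yields $g(x) \leq g(\sigma_H(x))$ for $x \in \mathrm{int}(H)$, and so $g^H = g \circ \sigma_H$ a.e.

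To close the argument, I would show $g \circ \sigma_H \neq g$: if they coincided, the invariance $g = g \circ \sigma_H$ combined with the radiality of $g$ around $y_0$ would make $g$ radial around both $y_0$ and $\sigma_H(y_0) \neq y_0$. Restricting to the line through these two centers, this forces the profile $\psi$ to satisfy $\psi(t) = \psi(|t - d|)$ for all $t \geq 0$, where $d = |\sigma_H(y_0) - y_0| > 0$, hence to be $d$-periodic on $[0, \infty)$; together with $\psi(r) \to 0$ as $r \to \infty$ (from $g \in L^{\theta_2}(\mathbb{R}^N)$) and $g > 0$ a.e., this is a contradiction. Therefore $f^H = f$ while $g^H \neq g$, and $g^H = g \circ \sigma_H$ while $f^H \neq f \circ \sigma_H$, contradicting the dichotomy of Lemma \ref{l16}. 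Hence $x_0 = y_0$, and $f, g$ are radial around a common center. The main subtlety is the polarization bookkeeping identifying $f^H = f$ with $x_0 \in H$ for radially nonincreasing $f$, which reduces to the elementary isometry identity used above.
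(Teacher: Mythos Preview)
Your proof is correct and follows essentially the same approach as the paper's: both use Lemma~\ref{l15} to get candidate centers, choose the perpendicular bisecting half-space, and derive a contradiction with the dichotomy of Lemma~\ref{l16} via the observation that a nonincreasing radial function symmetric under a reflection not fixing its center must be constant (and hence non-integrable). The only cosmetic difference is that the paper argues directly from each branch of the dichotomy to constancy, whereas you first compute $f^H=f$ and $g^H=g\circ\sigma_H$ from the radial monotonicity and then exclude both branches; the underlying idea is the same.
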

\begin{proof}
From Lemmas \ref{l15} and \ref{l16}, there are $x_0, x_1\in \mathbb{R}^N$ such that $f$ is radially symmetric with center at $x_0$ and $v$ is axially symmetric with center at $x_1$. Suppose that $x_0 \neq x_1$. In this case, consider the straight segment $L$ joining $x_0$ and $x_1$ and the closed half-space $H$ such that $x_0 \in H$, $\partial H$ is orthogonal to $L$ and intersects its middle point.  Thus, from Lemma \ref{l16}, either $f^H = f$ and $g^H = g$ or $f^H = f\circ \sigma_H $ and $g^H = g\circ \sigma_H$. If the first one happens, then $g$ must be constant and this contradicts the integrability of $g$. The second case induces a similar contradiction. Therefore, $x_0 = x_1$.
\end{proof}

It remains to prove that any positive ground state solution $(u,v)$ of \eqref{P} is such that $f= u^p$ and $g = v^q$ verify the hypotheses of Lemma \ref{l16}, which is proved below.

\begin{lemma}\label{l17}
Let $(u, v)$ be a positive ground state solution of \eqref{P} and $H$ be any closed half-space of $\mathbb{R}^N$. Then $(u^H, v^H)$ is also a ground state solution of \eqref{P}.
\end{lemma}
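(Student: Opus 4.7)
The plan is to exploit two classical polarization facts---preservation of the $H^1$ norm under polarization and the Riesz--rearrangement-type inequality for a polarized pair---and then use the Nehari machinery of Lemma \ref{l3} to squeeze the dilation parameter of $(u^H,v^H)$ back to $1$.

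First I would record the ingredients. For every nonnegative $f \in H^1(\mathbb{R}^N)$ and every closed half-space $H$ one has $\|f^H\|_2 = \|f\|_2$ and $\|\nabla f^H\|_2 = \|\nabla f\|_2$, so that $\|(u^H, v^H)\|_E = \|(u,v)\|_E$. For nonnegative $f,g$ vanishing at infinity the polarization analogue of the Riesz rearrangement inequality reads
\[
\int_{\mathbb{R}^N}\int_{\mathbb{R}^N} \frac{f(x)\,g(y)}{|x-y|^{N-\alpha}}\, dx\, dy \;\leq\; \int_{\mathbb{R}^N}\int_{\mathbb{R}^N} \frac{f^H(x)\,g^H(y)}{|x-y|^{N-\alpha}}\, dx\, dy,
\]
which is precisely the pointwise inequality established inside the proof of Lemma \ref{l16} integrated over $H\times H$. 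Applied with $f=u^p$ and $g=v^q$, and using $(u^p)^H = (u^H)^p$ (valid because $t\mapsto t^p$ is monotone on $[0,\infty)$), it yields $\int (I_\alpha \ast |u^H|^p)|v^H|^q \geq \int (I_\alpha \ast |u|^p)|v|^q$. Combining these two facts gives $I(u^H, v^H) \leq I(u,v) = c_{\mathcal N}$ and $P(u^H, v^H) \leq P(u,v) = 0$.

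Next I would perform the Nehari squeeze. Since $u,v>0$ in $\mathbb{R}^N$ forces $u^H, v^H>0$, the pair $(u^H, v^H)$ lies in the open set $\mathcal O$ of Lemma \ref{l3}, so there is a unique $\bar t>0$ with $\bar t(u^H, v^H) \in \mathcal N$. Plugging the two polarization facts into the explicit formula of Lemma \ref{l3}(ii) and using $(u,v)\in \mathcal N$ gives $\bar t^{\,p+q-2}\leq 1$, hence $\bar t \leq 1$. Conversely, the identity $I|_{\mathcal N}(w) = \bigl(\tfrac{1}{2}-\tfrac{1}{p+q}\bigr)\|w\|_E^2$ combined with $\|(u^H,v^H)\|_E = \|(u,v)\|_E$ gives $I(\bar t\,u^H, \bar t\,v^H) = \bar t^{\,2}\, c_{\mathcal N}$, which must be at least $c_{\mathcal N}$ because $\bar t(u^H,v^H)\in \mathcal N$, forcing $\bar t \geq 1$. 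Therefore $\bar t = 1$, so $(u^H,v^H)\in\mathcal N$ and $I(u^H,v^H) = c_{\mathcal N}$; being a minimizer on the $C^1$-manifold $\mathcal N$, the pair $(u^H,v^H)$ is a critical point of $I|_{\mathcal N}$, and Lemma \ref{l10} promotes it to a critical point of $I$ on $E$ of minimal nontrivial energy, i.e., a ground state solution of \eqref{P}. The one slightly delicate ingredient is the polarization inequality for the Riesz interaction of two distinct nonnegative functions, but this is classical (cf.\ \cite{Brock1999, Moroz2013}) and is just the integral form of the pointwise estimate already proved in the paper; the rest reduces to bookkeeping with Lemma \ref{l3}.
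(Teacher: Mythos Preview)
Your proof is correct and follows essentially the same approach as the paper's: record the two polarization facts (equality of $E$-norms and the Riesz interaction inequality), pass to the Nehari projection $\bar t(u^H,v^H)$, and squeeze $\bar t=1$ using the explicit formulas of Lemma~\ref{l3}. Your organization is slightly more explicit---you separate the two bounds $\bar t\leq 1$ and $\bar t\geq 1$ and invoke Lemma~\ref{l10} at the end---whereas the paper compresses the squeeze into a single chain of inequalities, but the content is the same.
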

\begin{proof}
Since $(u, v)\in \mathcal{N}$, 
$$
\|(u, v)\|_{E}^{2} =2 \int_{\mathbb{R}^N} (I_\alpha \ast |u|^p)|v|^q dx.
$$
Let $u^H$ and $v^H$ be the polarization of $u$ and $v$, respectively. Since $u^H \neq 0$ and $v^H \neq 0$,  from Lemma \ref{l3}, there exists a unique $\overline{t}_H > 0$ such that $(\overline{t}_H u^H, \overline{t}_H v^H)\in \mathcal{N}$. Thus, again by Lemma \ref{l3},
\begin{align*}
I(\overline{t}_H u^H, \overline{t}_H v^H)= &\left(\dfrac{1}{2}-\dfrac{1}{p+q}\right)(\overline{t}_H)^2 \|(u^H, v^H)\|_{E}^{2}\\=&\left(\dfrac{1}{2}-\dfrac{1}{p+q}\right)\dfrac{(\|(u^H, v^H)\|_{E}^{2})^{\frac{p+q}{p+q-2}}}{\left(2\int_{\mathbb{R}^N} (I_\alpha \ast |u^H|^p)|v^H|^q dx \right)^{\frac{2}{p+q-2}}}.
\end{align*}
Since
$$
\|(u, v)\|_{E}^{2}=\|(u^H, v^H)\|_{E}^{2} \mbox{ \ \ \ and \ \ \ } \int_{\mathbb{R}^N} (I_\alpha \ast |u|^p)|v|^q dx \leq \int_{\mathbb{R}^N} (I_\alpha \ast |u^H|^p)|v^H|^q dx,
$$
we infer that $\overline{t}_H \leq 1$ and
\begin{align*}
c_\mathcal{N} & = \left(\dfrac{1}{2}-\dfrac{1}{p+q}\right) \|(u, v)\|_{E}^{2}\leq I(\overline{t}_H u^H, \overline{t}_H v^H) = \left(\dfrac{1}{2}-\dfrac{1}{p+q}\right)\dfrac{(\|(u^H, v^H)\|_{E}^{2})^{\frac{p+q}{p+q-2}}}{\left(2\int_{\mathbb{R}^N} (I_\alpha \ast |u^H|^p)|v^H|^q dx \right)^{\frac{2}{p+q-2}}} \\
& \leq \left(\dfrac{1}{2}-\dfrac{1}{p+q}\right)\dfrac{(\|(u, v)\|_{E}^{2})^{\frac{p+q}{p+q-2}}}{\left(2\int_{\mathbb{R}^N} (I_\alpha \ast |u|^p)|v|^q dx \right)^{\frac{2}{p+q-2}}} = \left(\dfrac{1}{2}-\dfrac{1}{p+q}\right) \|(u, v)\|_{E}^{2} = c_\mathcal{N}.
\end{align*}
Therefore, we conclude that $\overline{t}_H = 1$ and $I(u^H, v^H)=I(u,v)$. Moreover,
\begin{equation}\label{novaigualdade}
\int_{\mathbb{R}^N} (I_\alpha \ast |u|^p)|v|^q dx = \int_{\mathbb{R}^N} (I_\alpha \ast |u^H|^p)|v^H|^q dx. \qedhere
\end{equation}
\end{proof}

\begin{proof}[\bf{Proof of Theorem \ref{t5}}]
Let $(u, v)$ be a positive ground state solution of \eqref{P}. From Lemma \ref{l16}, Corollary \ref{c5} and \eqref{novaigualdade}, we infer that there exists $x_0 \in \mathbb{R}^N$ such that $u$ and $v$ are both radial and radially decreasing with respect to $x_0$.
\end{proof}

\section{Regularity and integrability of solutions}\label{sec-reg}

This section is devoted to the proof of Theorem \ref{t10novo}, which is based on some bootstrap arguments. We stress that the regularity depends on four regions of $(p, q)$-plan as illustrated by Figure \ref{figreg}. In all cases we get $W^{2,t}(\mathbb{R}^N)$-regularity for $t$ sufficient and arbitrarily large, but in some regions $t$ cannot approach one.

\begin{proof}[\bf{Proof of Theorem \ref{t10novo}}]

 Let $(u, v)$ be a solution of \eqref{P}. We divide the proof into eight steps. 
 \medbreak

\noindent{\bf Step 1.} The start of the bootstrap process: If $u\in L^s(\mathbb{R}^N)$ and $v \in L^t(\mathbb{R}^N)$ for $(s,t)$ satisfying \eqref{precisamos} and \eqref{sermaior1} ahead, then $u\in L^\sigma(\mathbb{R}^N)$ and $v \in L^\tau(\mathbb{R}^N)$ for all $(\sigma,\tau)$ in a closed rectangle. Moreover, there exists a special point $(s_0, t_0)$ satisfying \eqref{precisamos} and \eqref{sermaior1}, such that $u\in L^\sigma(\mathbb{R}^N)$ and $v \in L^\tau(\mathbb{R}^N)$ for all $(\sigma,\tau)$ in a closed rectangle having $(s_0, t_0)$ as an interior point.

Indeed, suppose that $u\in L^{s}(\mathbb{R}^N)$ and $v\in L^{t}(\mathbb{R}^N)$, for some $s>1$ and $t>1$. From \eqref{HLS2}, if
\begin{equation}\label{precisamos}
\frac{N}{\alpha}p>s>p \ \ \text{and} \ \ \frac{N}{\alpha}q>t>q,
\end{equation}
then
\begin{equation}\label{eq13}
\left\{\begin{array}{ll}
I_\alpha \ast |u|^p \in L^\gamma(\mathbb{R}^N) \mbox{ \ \ \ with \ \ \ } \dfrac{1}{\gamma} = \dfrac{p}{s}-\dfrac{\alpha}{N}, \vspace{10pt}\\
I_\alpha \ast |v|^q \in L^\beta(\mathbb{R}^N) \mbox{ \ \ \ with \ \ \ } \dfrac{1}{\beta} = \dfrac{q}{t}-\dfrac{\alpha}{N}.
\end{array} \right.
\end{equation}
Next, from $u\in L^{s}(\mathbb{R}^N)$ and $v\in L^{t}(\mathbb{R}^N)$ as in \eqref{precisamos}, we intend to find the value $r>1$ (if it exists) such that $(I_\alpha \ast |v|^q)|u|^{p-2}u \in L^{r}(\mathbb{R}^N)$. From \eqref{eq13} and Hölder inequality, we obtain the expression for $r$, namely,
\begin{equation}\label{eq14}
\dfrac{1}{r}=\dfrac{1}{s}(p-1)+\dfrac{1}{t}q-\dfrac{\alpha}{N}.
\end{equation}
Analogously, from $u\in L^{s}(\mathbb{R}^N)$ and $v\in L^{t}(\mathbb{R}^N)$ as above and \eqref{eq13}, the value $h>1$ (if it exists) such that $(I_\alpha \ast |u|^p)|v|^{q-2}v \in L^{h}(\mathbb{R}^N)$ is given by
\begin{equation}\label{eq15}
\dfrac{1}{h}=\dfrac{1}{t}(q-1)+\dfrac{1}{s}p-\dfrac{\alpha}{N}.
\end{equation}
Note that conditions $\frac{N}{\alpha}p >s$ and $\frac{N}{\alpha}q>t$ and $p,q>1$ guarantee that $r>0$ and $h>0$. Then, the conditions $r>1$ and $h>1$ read $\frac{1}{r}<1$ and $\frac{1}{h}<1$, namely
\begin{equation}\label{sermaior1}
\dfrac{1}{s}(p-1)+\dfrac{1}{t}q<\dfrac{N+\alpha}{N} \ \ \text{and} \ \ 
\dfrac{1}{t}(q-1)+\dfrac{1}{s}p<\dfrac{N+\alpha}{N}.
\end{equation}
Hence, from the classical theory of regularity for second order elliptic PDEs, if $s$ and $t$ satisfy \eqref{precisamos} and \eqref{sermaior1}, then $u\in W^{2, r}(\mathbb{R}^N)$ and $v\in W^{2, h}(\mathbb{R}^N)$, with $r$ and $h$ as in \eqref{eq14} and \eqref{eq15}, respectively. Moreover, from the Sobolev embeddings, we obtain that $u\in L^{\sigma}(\mathbb{R}^N)$ for all
\begin{equation}\label{eq19nova}
\dfrac{1}{s}(p-1)+\dfrac{1}{t}q-\dfrac{\alpha+2}{N}\leq \dfrac{1}{\sigma}\leq\dfrac{1}{s}(p-1)+\dfrac{1}{t}q-\dfrac{\alpha}{N} 
\end{equation}
and $v\in L^{\tau}(\mathbb{R}^N)$ for all
\begin{equation}\label{eq20nova}
\dfrac{1}{t}(q-1)+\dfrac{1}{s}p-\dfrac{\alpha+2}{N}\leq\dfrac{1}{\tau}\leq\dfrac{1}{t}(q-1)+\dfrac{1}{s}p-\dfrac{\alpha}{N}.
\end{equation}

Now observe that, since \eqref{H1} is assumed, let $\theta_1$ and $\theta_2$ be as in \eqref{novaeq}.  Then, $u\in L^{\theta_1 p}(\mathbb{R}^N)$ and $v\in L^{\theta_2 q}(\mathbb{R}^N)$. Set $s_0 =\theta_1 p $, $t_0 =\theta_2 q $ and observe that \eqref{precisamos} and \eqref{sermaior1} are satisfied for $s=s_0$ and $t=t_0$. Moreover, 
\[
\left[\dfrac{1}{s_0}(p-1)+\dfrac{1}{t_0}q-\dfrac{\alpha}{N}\right]  - \dfrac{1}{2} =  1 - \dfrac{1}{s_0} -\dfrac{1}{2} = \dfrac{s_0-2}{2s_0}>0 \ ,\]
\[
\left[\dfrac{1}{t_0}(q-1)+\dfrac{1}{s_0}p-\dfrac{\alpha}{N}\right]-\dfrac{1}{2}= 1 - \dfrac{1}{t_0} - \dfrac{1}{2} = \dfrac{t_0-2}{2t_0}>0 \ ,
\]
because $s_0,t_0>2$. In addition
$$
\left[ \dfrac{1}{s_0}(p-1)+\dfrac{1}{t_0}q-\dfrac{\alpha+2}{N} \right] - \frac{N-2}{2N}=  \dfrac{N-2}{2N} - \frac{1}{s_0} <0\ ,
$$
$$
\left[ \dfrac{1}{t_0}(q-1)+\dfrac{1}{s_0}p-\dfrac{\alpha+2}{N}\right] - \frac{N-2}{2N}=\dfrac{N-2}{2N} - \frac{1}{t_0} <0 \ ,
$$
since $\frac{N-2}{2N} < \frac{1}{s_0}, \frac{1}{t_0}$. Then, setting
\begin{align}
&\dfrac{1}{\underline{s}_1} = \dfrac{1}{s_0}(p-1)+\dfrac{1}{t_0}q-\dfrac{\alpha}{N}, \ \ \ \ \dfrac{1}{\underline{t}_1} = \dfrac{1}{t_0}(q-1)+\dfrac{1}{s_0}p-\dfrac{\alpha}{N}, \vspace{10pt}\label{underline}\\
& \dfrac{1}{\overline{s}_1} = \dfrac{1}{s_0}(p-1)+\dfrac{1}{t_0}q-\dfrac{\alpha+2}{N},  \ \ \ \ \dfrac{1}{\overline{t}_1} =  \dfrac{1}{t_0}(q-1)+\dfrac{1}{s_0}p-\dfrac{\alpha+2}{N} \label{overline},
\end{align}
it follows that 
\begin{equation}\label{underover}
1 > \dfrac{1}{\underline{s}_1} > \dfrac{1}{2} > \dfrac{1}{s_0}> \dfrac{N-2}{2N} > \dfrac{1}{\overline{s}_1} \ \ \text{ and } \ \ 1 > \dfrac{1}{\underline{t}_1} > \dfrac{1}{2} > \dfrac{1}{t_0}> \dfrac{N-2}{2N} > \dfrac{1}{\overline{t}_1}.
\end{equation}
Moreover, $u\in L^\sigma(\mathbb{R}^N)$ and $v \in L^\tau(\mathbb{R}^N)$ for all $\sigma, \tau\geq 1$ such that
\begin{equation}\label{firstrange}
\dfrac{1}{\underline{s}_1} \geq \dfrac{1}{\sigma} \geq \dfrac{1}{\overline{s}_1} \ \ \text{ and } \ \ \dfrac{1}{\underline{t}_1} \geq \dfrac{1}{\tau} \geq \dfrac{1}{\overline{t}_1}.
\end{equation}

\medbreak

For the readers convenience, we illustrate at Figures \ref{pichipa}-\ref{pichipd} the points $(s,t)$ as in \eqref{precisamos} and \eqref{sermaior1} with $(p,q)$ as in Theorem \ref{t10novo} (a)-(d). These pictures are also intended to illustrate the next steps on the bootstrap argument.

\begin{figure}[!htb]\centering
   \begin{minipage}{0.45\textwidth}
     \includegraphics[width=.8\linewidth]{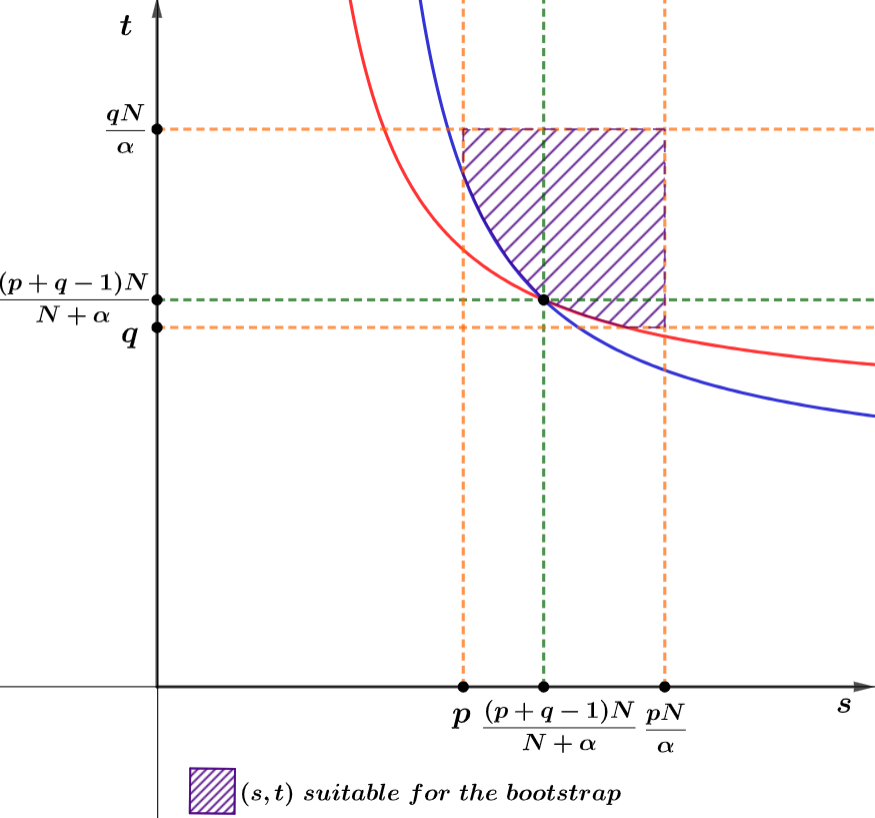}
     \caption{Points $(s,t)$ as in \eqref{precisamos} and \eqref{sermaior1} with $(p,q)$ as in Theorem \ref{t10novo} (a).}\label{pichipa}
   \end{minipage}
   \hspace{1cm}
   \begin {minipage}{0.45\textwidth}
     \includegraphics[width=.8\linewidth]{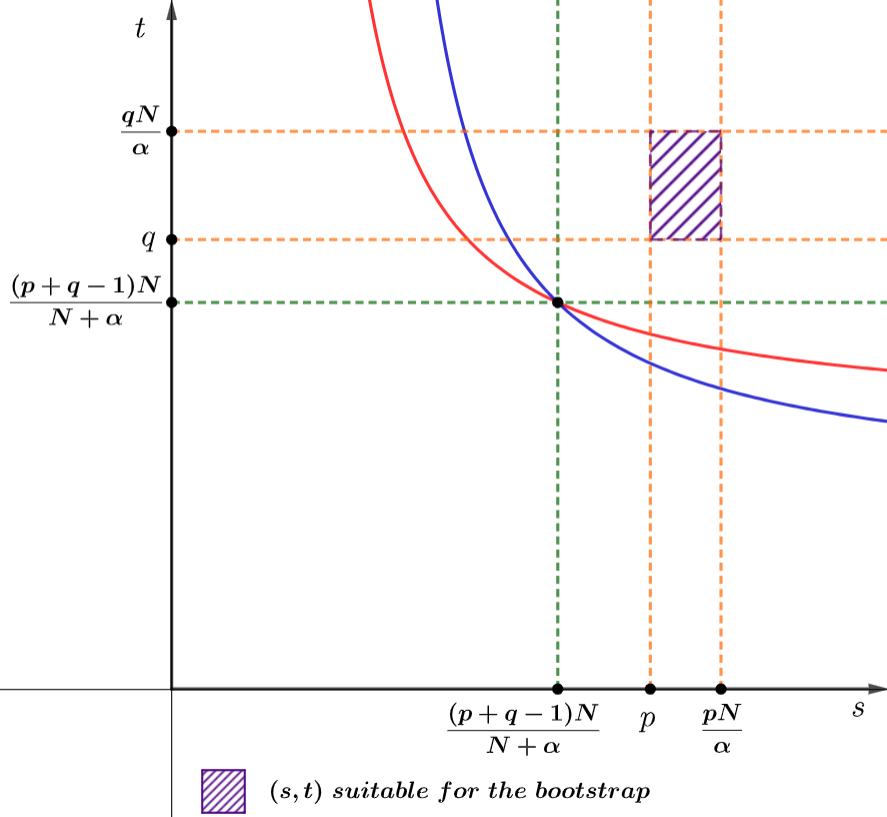}
     \caption{Points $(s,t)$ as in \eqref{precisamos} and \eqref{sermaior1} with $(p,q)$ as in Theorem \ref{t10novo} (b).}\label{pichipb}
   \end{minipage}
\end{figure}

\begin{figure}[!htb]\centering
   \begin{minipage}{0.45\textwidth}
     \includegraphics[width=.8\linewidth]{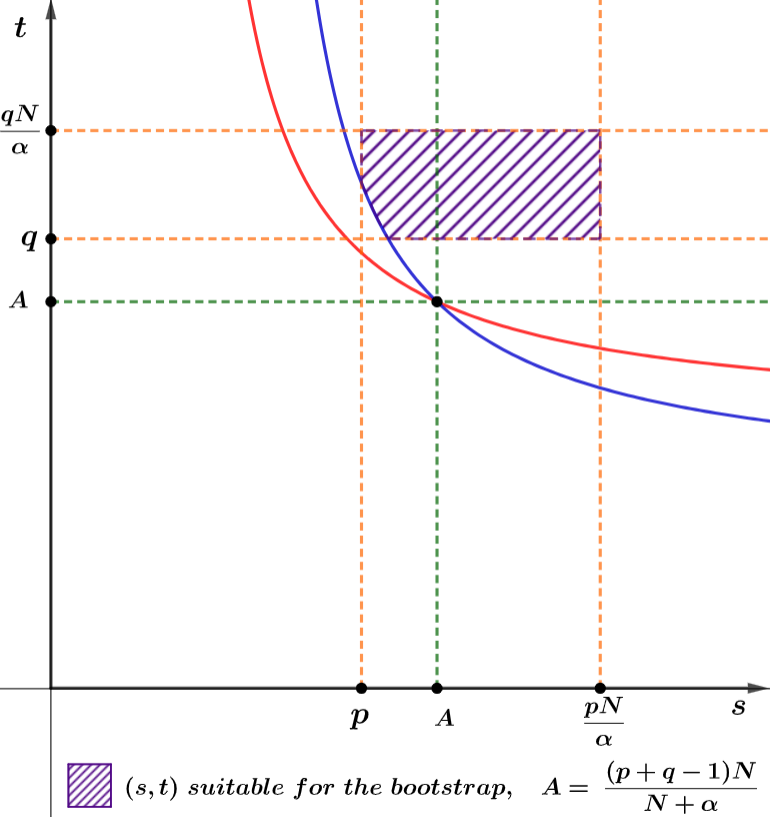}
     \caption{Points $(s,t)$ as in \eqref{precisamos} and \eqref{sermaior1} with $(p,q)$ as in Theorem \ref{t10novo} (c).}\label{pichipc}
   \end{minipage}
   \hspace{1cm}
   \begin {minipage}{0.45\textwidth}
     \includegraphics[width=.8\linewidth]{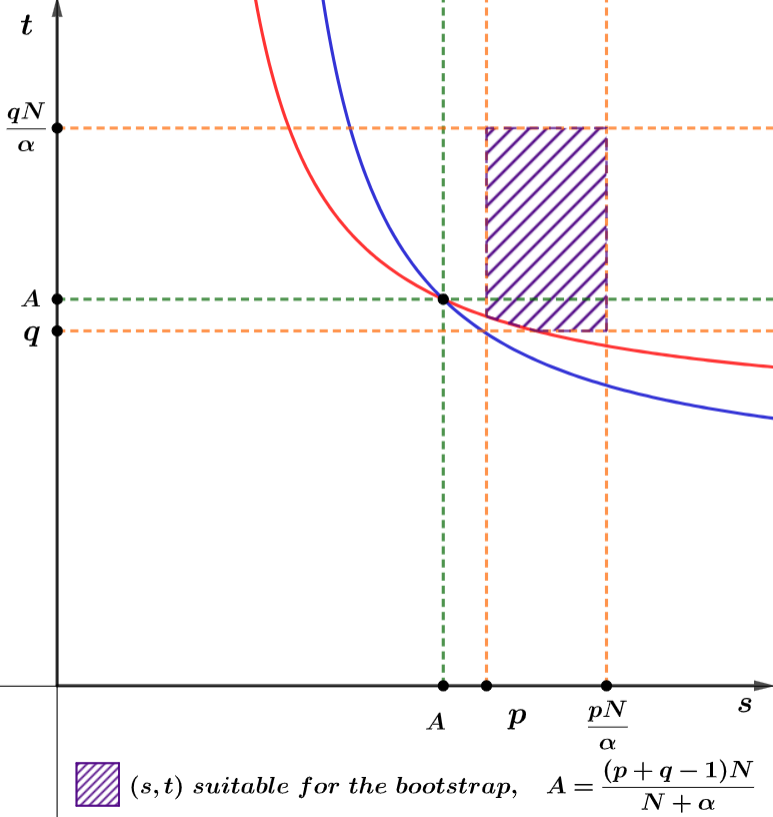}
     \caption{Points $(s,t)$ as in \eqref{precisamos} and \eqref{sermaior1} with $(p,q)$ as in Theorem \ref{t10novo} (d).}\label{pichipd}
   \end{minipage}
\end{figure}

\medbreak

\newpage
\noindent {\bf Step 2.} There exists $1  < \underline{b} <2$, and for $N\geq3$ there exists $\overline{b} > \frac{2N}{N-2}$ such that $u, v \in L^{b}(\mathbb{R}^N)$ 
\[
\forall \, b\in [\underline{b}, \infty), \ \ \text{for} \ \ N=1,2; \ \ \ \forall \, b\in [\underline{b}, \overline{b}], \ \ \text{for} \ \ N\geq3.
\]

Indeed, from \eqref{underover} and \eqref{firstrange}, take $\underline{b} = \max\{\underline{s}_1, \underline{t}_1\}$ and, for $N \geq 3$, choose any $\overline{b}>0$ such that
\[
\max\left\{ \dfrac{1}{\overline{s}_1}, \dfrac{1}{\overline{t}_1}\right\} \leq \dfrac{1}{\overline{b}} < \dfrac{N-2}{2N}.
\]

\medbreak

\noindent {\bf Step 3.} There exist $1  < \underline{a} <2 < \min\{s_0, t_0\} \leq \max\{s_0, t_0\}< \overline{a} $, such that $u, v \in L^{a}(\mathbb{R}^N)$ for all $a\in [\underline{a}, \overline{a}]$ and such that
\begin{equation}\label{bbarra}
\dfrac{p+q-2}{\overline{a}} < \frac{\alpha+2}{N} \ \ \text{and }\ \ \dfrac{N}{2}\max\{(q-2), (p-2)\} < \overline{a}. 
\end{equation}

Indeed, from Step 2, we can take $\underline{a} = \underline{b}$. For $N=1,2$, from Step 2, on can choose $\overline{a}$ sufficiently large such that \eqref{bbarra} holds. For $N\geq 3$, take $\overline{a} = \overline{b}$ from Step 2, and observe that
\[
\dfrac{p+q-2}{\overline{a}} < \dfrac{(p+q-2)(N-2)}{2N} < \dfrac{N+\alpha}{N} - \dfrac{N-2}{N}= \dfrac{\alpha+2}{N},
\]
where the last inequality comes from \eqref{H1}, namely $p+q< \frac{2(N+\alpha)}{N-2}$. Moreover, for $N\geq 3$, since $p,q < \frac{2N}{N-2}$
\[
\dfrac{N}{2}\max\{(q-2), (p-2)\} < \dfrac{2N}{N-2} < \overline{a}.
\]

\medbreak 

\noindent {\bf Step 4.} Suppose that $(s,t)$ satisfies \eqref{precisamos} and \eqref{sermaior1}, and consider $r$ and $h$ given by \eqref{eq14} and \eqref{eq15}, respectively. Then
\begin{equation}\label{alternancia}
s \leq t \Longrightarrow r\geq h \ \ \ \text{and} \ \ s \geq t \Longrightarrow r \leq h.
\end{equation}

Indeed, from \eqref{eq14} and \eqref{eq15}, it follows that
\[
\dfrac{1}{r} + \dfrac{1}{s} = \dfrac{1}{h} + \dfrac{1}{t},
\]
which implies \eqref{alternancia}.

\medbreak

\noindent {\bf Step 5.} For all $r, h>1$ verifying
 $$
\dfrac{1}{\overline{r}}> \dfrac{1}{r}> \dfrac{\alpha}{N}\left(1-\dfrac{1}{p}\right)  \mbox{ \ \ \ and \ \ \ } \dfrac{1}{\overline{h}}> \dfrac{1}{h}> \dfrac{\alpha}{N}\left(1-\dfrac{1}{q}\right),
 $$
it holds that $u\in W^{2, r}(\mathbb{R}^N)$ and $v\in W^{2, h}(\mathbb{R}^N)$. Then, from the Sobolev embeddings, for all $s, t>1$ satisfying
 $$
\dfrac{1}{\overline{r}} > \dfrac{1}{s}> \dfrac{\alpha}{N}\left(1-\dfrac{1}{p}\right)-\dfrac{2}{N} \mbox{ \ \ \ and \ \ \ }  \dfrac{1}{\overline{h}}> \dfrac{1}{t}> \dfrac{\alpha}{N}\left(1-\dfrac{1}{q}\right)-\dfrac{2}{N},
 $$
 it holds that $u\in L^{s}(\mathbb{R}^N)$ and $v\in L^{t}(\mathbb{R}^N)$.  Here
 \[
\begin{cases}
\overline{r} =\overline{h} = 1 \quad \text{at case (a),}\\
\overline{r}= r^{\ast}, \ \ \overline{h}= h^{\ast} \quad\text{at case (b),}\\
\overline{r}= r^{\ast \ast}, \ \ \overline{h}= h^{\ast \ast} \quad\text{at case (c),}\\ 
\overline{r}=  r^{\ast \ast \ast}, \ \ \overline{h}=  h^{\ast \ast \ast} \quad\text{at case (d).}
\end{cases}
\]

\noindent {\bf The descending process.} Let $\underline{a}$ from Step 3. We split the argument into two cases. 
\medbreak

\noindent {\bf Case 1.} The point $(\underline{a}, \underline{a})$ satisfies \eqref{sermaior1} and $\underline{a}> p,q$.

In this case, since $\underline{a} < s_0, t_0$, it follows that $(\underline{a}, \underline{a})$ also satisfies \eqref{precisamos}.  Then, return to Step 1, with a point $(s, s)$, with $s\leq \underline{a}$ satisfying \eqref{precisamos} and \eqref{sermaior1}. Then, from \eqref{eq14}, \eqref{eq15} and \eqref{H1}, $r= h$, $u, v \in W^{2, r}(\mathbb{R}^N)$ and 

\begin{equation}\label{decrescente}
\dfrac{1}{r}-\dfrac{1}{s}= \dfrac{1}{h} - \dfrac{1}{s}= \dfrac{p+q-2}{s}-\dfrac{\alpha}{N}\geq \dfrac{p+q-2}{\underline{a}}-\dfrac{\alpha}{N} > \dfrac{p+q-2}{2}-\dfrac{\alpha}{N}>  0.
\end{equation}

This means that $r=h<s$, and there is a uniform gap between $r$ and $s$. Next we analyse this descent process in each region stated in the theorem.

\medbreak

\noindent {\bf Region (a).} From \eqref{decrescente}, the uniform gap, after a finite number of steps, it follows that $u,v \in L^{s}(\mathbb{R}^N)$ for all 
$$\frac{(p+q-1)N}{N+\alpha} < s \leq \underline{a}$$
and for all $s$ in this range, $(s,s)$ satisfies  \eqref{precisamos} and \eqref{sermaior1}. Finally, plugging \linebreak $(s,t) = \left( \frac{(p+q-1)N}{N+\alpha} +\varepsilon,\frac{(p+q-1)N}{N+\alpha} +\varepsilon \right)$ at \eqref{eq14} and \eqref{eq15}, it follows $r=h\to 1$ as $\varepsilon \to 0$, which implies  $\overline{r} = \overline{h}=1$.

\medbreak

\noindent {\bf Region (b).} Suppose, without loss of generality, that $q\geq p$. The objective is to descend as close as possible to the point $(p,q)$, and then give the final step. From \eqref{decrescente}, the uniform gap, after a finite number of steps, it follows that $u,v \in L^{s}(\mathbb{R}^N)$, for all 
\[
q < s \leq \underline{a}.
\]

If $q=p$, the process has already reached the limit point $(p,q)$. 

If $q>p$, then taking the points of $(s,t) = (q+\varepsilon, q +\varepsilon)$, from the uniform gap \eqref{decrescente}, it follows that $u\in L^s(\mathbb{R}^N)$, $v \in L^t(\mathbb{R}^N)$ with (indeed the range would be even better)
\[
q \leq s \leq \underline{a} \ \ \text{ and } \ \ q < t \leq \underline{a},
\]
and all these pairs $(s,t)$ satisfy \eqref{precisamos} and \eqref{sermaior1}. Recall that we are treating the case $p< q < \underline{a}<2$.

If $\frac{2N}{2N-\alpha} \leq p < q$. Then, for all $p < s_0 < q$, and for all $(s,t) = (s,q)$ with $s_0 \leq s \leq q$ (indeed one must take $t=q+ \varepsilon$), it follows that
\begin{align}
\dfrac{1}{r} - \frac{1}{s} & = \dfrac{p-2}{s} +\dfrac{q}{q} - \dfrac{\alpha}{N} \geq \dfrac{p-2}{s_0} + \dfrac{N - \alpha}{N} \label{ateaqui} \\
& > \dfrac{p-2}{p} + \dfrac{N-\alpha}{N} > \dfrac{p-2}{p} + \dfrac{N+\alpha}{N} = \dfrac{p(2N-\alpha)-2N}{pN}\geq 0. \nonumber
\end{align}
From this uniform gap, after a finite number of steps, we infer that $u\in L^s(\mathbb{R}^N)$, $v \in L^t(\mathbb{R}^N)$ for all
\[
s_0 <s < \underline{a} \ \ \text{ and } \ \ q < t \leq \underline{a}.
\]
Since $p < s_0< q$, was arbitrary, we conclude that $u\in L^s(\mathbb{R}^N)$, $v \in L^t(\mathbb{R}^N)$ for all
\[
p <s < \underline{a} \ \ \text{ and } \ \ q < t \leq \underline{a}.
\]

One could argue similarly in the case $\frac{2N}{2N -\alpha} \leq q \leq p$. Therefore, if $p,q \geq \frac{2N}{2N-\alpha}$, then $u\in L^s(\mathbb{R}^N)$, $v \in L^t(\mathbb{R}^N)$ for all
\[
p <s < \underline{a} \ \ \text{ and } \ \ q < t \leq \underline{a}.
\]
Finally, plugging $(s,t) = \left( p +\varepsilon, q +\varepsilon \right)$ at \eqref{eq14} and \eqref{eq15}, it follows that
\[
\dfrac{1}{r} \to \dfrac{p-1}{p} + \dfrac{N-\alpha}{N} = \dfrac{(2N-\alpha)p -N}{pN}, \ \ \ \dfrac{1}{h} \to \dfrac{q-1}{q} + \dfrac{N-\alpha}{N} = \dfrac{(2N-\alpha)q -N}{qN} \ \ \text{as $\varepsilon \to 0$.}
\]

Now, consider the case $p \leq \frac{2N}{2N - \alpha}$ (then, by \eqref{H1}, $\frac{2N}{2N - \alpha} < q$) and set $s_0 = \frac{(2-p)N}{N-\alpha}$ (observe that $p <s_0$). So, arguing as in \eqref{ateaqui}, we infer that $u\in L^s(\mathbb{R}^N)$, $v \in L^t(\mathbb{R}^N)$ for all
\[
\dfrac{(2-p)N}{N-\alpha} <s < \underline{a} \ \ \text{ and } \ \ q < t \leq \underline{a}.
\]
Then, plugging $(s,t) = \left( \frac{(2-p)N}{N-\alpha} +\varepsilon, q +\varepsilon \right)$ at \eqref{eq14} and \eqref{eq15}, it follows that
\begin{align*}
&\dfrac{1}{r} \to \dfrac{(p-1)(N-\alpha)}{(2-p)N} + \dfrac{N-\alpha}{N} = \dfrac{N-\alpha}{(2-p)N}, \\ & \dfrac{1}{h} \to \dfrac{q-1}{q} + \dfrac{p(N -\alpha)}{(2-p)N} - \dfrac{\alpha}{N} = \dfrac{(2q+p-2)N - 2q\alpha}{(2-p)qN} \ \ \text{as $\varepsilon \to 0$.}
\end{align*}

Similarly, if $q \leq \frac{2N}{2N - \alpha}$ (then, by \eqref{H1}, $\frac{2N}{2N - \alpha} < p$ ), by plugging $(s,t) = \left( p+ \varepsilon , \frac{(2-q)N}{N-\alpha} +\varepsilon\right)$ at \eqref{eq14} and \eqref{eq15}, it follows that
\[
\dfrac{1}{r} \to \dfrac{(2p+q-2)N - 2p\alpha}{(2-q)pN} , \ \ \  \dfrac{1}{h} \to   \dfrac{N-\alpha}{(2-q)N} \ \ \text{as $\varepsilon \to 0$.}
\]

\noindent {\bf Region (c).} In this case, the objective is to make $(s,t)$ as close as possible to the intersection point of the line $t=q$ and the hyperbola $
\frac{1}{t}(q-1)+\frac{1}{s}p=\frac{N+\alpha}{N}$, given by $\left(\frac{pqN}{N+\alpha q}, q\right)$. 

Observe that the conditions describing Region (c) implies that
\[
p \leq \frac{pqN}{N+\alpha q}<q.
\]
So, as in the case for the Region (b), using the uniform gap \eqref{decrescente}, after a finite number of steps, we infer that $u, v \in L^s(\mathbb{R}^N)$ for all
\[
q < s \leq \underline{a}.
\]
Hence, as in the case for Region (b), if $\frac{(2-p)N}{N-\alpha} \leq \frac{pqN}{N + \alpha q}$, then $u \in L^s(\mathbb{R}^N), v \in L^{t}(\mathbb{R}^N)$ for all
\[
\dfrac{pqN}{N + \alpha q} < s \leq \underline{a} \ \ \text{and} \ \ q< t \leq \underline{a}.
\]
By plugging $(s,t) = \left( \frac{pqN}{N + \alpha q} +\varepsilon, q +\varepsilon \right)$ at \eqref{eq14} and \eqref{eq15}, it follows that
\begin{align*}
&\dfrac{1}{r} \to \dfrac{(p-1)(N +\alpha q)}{pqN} + \dfrac{N-\alpha}{N} = \dfrac{(pq+p-1)N -q \alpha}{pqN}, \\
& \dfrac{1}{h} \to \dfrac{q-1}{q} + \dfrac{p(N + \alpha q)}{pqN} - \dfrac{\alpha}{N} = 1 \ \ \text{as} \ \ \varepsilon \to 0.
\end{align*}

Also, as in the case for Region (b), if $\frac{(2-p)N}{N-\alpha} > \frac{pqN}{N + \alpha q}$, then $u \in L^s(\mathbb{R}^N), v \in L^{t}(\mathbb{R}^N)$ for all
\[
\dfrac{(2-p)N}{N - \alpha} < s \leq \underline{a} \ \ \text{and} \ \ q< t \leq \underline{a}.
\]
Then, plugging $(s,t) = \left( \frac{(2-p)N}{N-\alpha} +\varepsilon, q +\varepsilon \right)$ at \eqref{eq14} and \eqref{eq15}, it follows that
\begin{align*}
&\dfrac{1}{r} \to \dfrac{(p-1)(N-\alpha)}{(2-p)N} + \dfrac{N-\alpha}{N} = \dfrac{N-\alpha}{(2-p)N}, \\ & \dfrac{1}{h} \to \dfrac{q-1}{q} + \dfrac{p(N -\alpha)}{(2-p)N} - \dfrac{\alpha}{N} = \dfrac{(2q+p-2)N - 2q\alpha}{(2-p)qN} \ \ \text{as $\varepsilon \to 0$.}
\end{align*}

\medbreak

\noindent {\bf Region (d).} For this region, one can apply, mutatis mutandis, the arguments from Region (c).

\medbreak

\noindent {\bf Case 2.} The point $(\underline{a}, \underline{a})$ does not satisfy \eqref{sermaior1} or $\underline{a}\leq p$ or $\underline{a} \leq q$.

Observe that,  from \eqref{underover} and \eqref{firstrange}, Steps 2 and 3, we infer that $u,v\in L^s(\mathbb{R}^N)$ for all $\underline{a} \leq s  \leq \overline{a}$.

\medbreak
\noindent {\bf Region (a).} Observe that, in this case, $(\underline{a}, \underline{a})$ does not satisfy \eqref{sermaior1}, and then $\underline{a} \leq \frac{(p+q-1)N}{N+\alpha}< \overline{a}$. Then, since  $u,v\in L^s(\mathbb{R}^N)$ for all $\underline{a} \leq s  \leq \overline{a}$, as in the previous case, plugging \linebreak $(s,t) = \left( \frac{(p+q-1)N}{N+\alpha} +\varepsilon,\frac{(p+q-1)N}{N+\alpha} +\varepsilon \right)$ at \eqref{eq14} and \eqref{eq15}, it follows $r=h\to 1$ as $\varepsilon \to 0$, which implies  $\overline{r} = \overline{h}=1$.

\medbreak

\noindent {\bf Region (b).} Suppose, without loss of generality, $q\geq p$. Hence, in this case $\underline{a} \leq q$. Then, since $u,v\in L^s(\mathbb{R}^N)$ for all $\underline{a} \leq s  \leq \overline{a}$, one can argue as in the proof of the previous case for the Region (b), from the point that $u,v\in L^s(\mathbb{R}^N)$ for all $q \leq s  \leq \overline{a}$.

\medbreak
\noindent {\bf Region (c).} As written in the previous case, in this region
\[
p \leq \frac{pqN}{N+\alpha q}<q.
\]
and, the condition in Case 2 says that $q\geq \underline{a}$. Then, since $u,v\in L^s(\mathbb{R}^N)$ for all $\underline{a} \leq s  \leq \overline{a}$, one can argue as in the proof of the previous case for the Region (c), from the point that $u,v\in L^s(\mathbb{R}^N)$ for all $q \leq s  \leq \overline{a}$. 

\medbreak

\noindent {\bf Region (d).} For this region, one can apply, mutatis mutandis, the arguments from Region (c) from the last paragraph.

\medbreak
\noindent {\bf The ascending process.}

Given $(s,t)$ satisfying \eqref{precisamos} and \eqref{sermaior1}, from \eqref{eq19nova} and \eqref{eq20nova}, set
\begin{equation}\label{stoverline}
\dfrac{1}{\overline{s}} = \dfrac{p-1}{s} + \dfrac{q}{t} - \dfrac{\alpha+2}{N} \ \ \text{ and } \ \ \dfrac{1}{\overline{t}} = \dfrac{q-1}{t} + \dfrac{p}{s} - \dfrac{\alpha+2}{N}.
\end{equation}

Consider $\overline{a}$ from Step 3. The first common target, in all the four regions, is to prove that $u \in L^s(\mathbb{R}^N), v \in L^t(\mathbb{R}^N)$ for all $(s,t) \in [\overline{a}, \frac{pN}{\alpha}) \times [\overline{a}, \frac{qN}{\alpha})$.

\medbreak

\noindent{\bf Case 1. } The point $\overline{a}$ from Step 3 satisfies $\overline{a} < \frac{N}{\alpha} \min\{p,q\}$.

In this case, since $\max\{s_0, t_0\} < \overline{a}$, the point $(\overline{a}, \overline{a})$ also satisfies \eqref{precisamos} and \eqref{sermaior1}. In addition, for all $s=t\geq \overline{a}$, from \eqref{eq19nova} and \eqref{eq20nova}, it follows that $u, v \in L^{\sigma}(\mathbb{R}^N)$ for all $\sigma \geq \overline{a}$ such that
\[
\dfrac{p+q-1}{s}- \dfrac{\alpha+2}{N} \leq \dfrac{1}{\sigma} \leq \frac{1}{\overline{a}} 
\]
and, by \eqref{bbarra}, 
\begin{equation}\label{crescente}
\dfrac{1}{\overline{s}} - \dfrac{1}{s} = \left[\dfrac{p+q-1}{s}- \dfrac{\alpha+2}{N}\right] -\dfrac{1}{s}= \dfrac{p+q-2}{s} - \dfrac{\alpha+2}{N} \leq \dfrac{p+q-2}{\overline{a}}-\dfrac{\alpha+2}{N}<0,
\end{equation}
which gives a uniform gap for the ascending integrability of $u$ and $v$.

Without loss of generality, suppose that $q\geq p$. Then, from the uniform gap \eqref{crescente}, after a finite number of steps, we infer that $u, v \in L^{\sigma}(\mathbb{R}^N)$ for all $\sigma$ such that
\[
\overline{a} \leq \sigma \leq \frac{pN}{\alpha} + \delta,
\]
for some $\delta>0$. 

If $q=p$, then one has reached the first target, namely $u \in L^s(\mathbb{R}^N), v \in L^t(\mathbb{R}^N)$ for all $(s,t) \in [\overline{a}, \frac{pN}{\alpha}) \times [\overline{a}, \frac{qN}{\alpha})$.

If $q>p$, then observe that for all $\overline{a}<\frac{pN}{\alpha} \leq t < \frac{qN}{\alpha}$ and $s = \frac{pN}{\alpha} - \varepsilon$, it follows that $(s,t)$ satisfies \eqref{precisamos} and \eqref{sermaior1}. Moreover, from \eqref{stoverline} (at the limit as $\varepsilon \to 0$)
\[
\dfrac{1}{\overline{t}} - \frac{1}{t} \to \dfrac{q-2}{t} + \frac{p\alpha}{pN} - \dfrac{\alpha+ 2}{N} = \dfrac{q-2}{t} - \dfrac{2}{N} <0,
\]
which follows from $t \geq \overline{a}$ and the second inequality at \eqref{bbarra}. This again produces a uniform gap between, $\overline{t}$ and $t$. Then after a finite number of steps, we infer that $u \in L^s(\mathbb{R}^N), v \in L^t(\mathbb{R}^N)$ for all $(s,t) \in [\overline{a}, \frac{pN}{\alpha}) \times [\overline{a}, \frac{qN}{\alpha})$. This finishes the first target of this ascending process. 

Finally, taking $(s,t) = (\frac{pN}{\alpha} - \varepsilon, \frac{qN}{\alpha} - \varepsilon)$, which satisfies \eqref{precisamos} and \eqref{sermaior1} (for $\varepsilon>0$ small), we infer from \eqref{eq14} that
\[
\dfrac{1}{r} \to \dfrac{(p-1)\alpha}{pN} + \dfrac{q\alpha}{qN} - \dfrac{\alpha}{N} = \dfrac{\alpha}{N}\left( 1 - \dfrac{1}{p}\right)
\]
and similarly, from \eqref{eq15}, that
\[
\dfrac{1}{h} \to \dfrac{\alpha}{N}\left( 1 - \dfrac{1}{q}\right).
\]

This finishes the proof of the ascending process in this case.
\medbreak

\noindent{\bf Case 2. } The point $\overline{a}$ from Step 3 satisfies $\overline{a} \geq \frac{N}{\alpha} \min\{p,q\}$.

If $\overline{a}$ also satisfies $\overline{a} \geq \frac{N}{\alpha} \max\{p,q\}$, then $u \in L^s(\mathbb{R}^N), v \in L^t(\mathbb{R}^N)$ for all $(s,t) \in [\overline{a}, \frac{pN}{\alpha}) \times [\overline{a}, \frac{qN}{\alpha})$, and the first target of the ascending process is completed. Then, one proceed as in the previous case by taking $(s,t) = (\frac{pN}{\alpha} - \varepsilon, \frac{qN}{\alpha} - \varepsilon)$ to complete the proof of this step.

So, consider the case
\[
\dfrac{N}{\alpha} \min\{p,q\} \leq \overline{a} <\dfrac{N}{\alpha} \max\{p,q\}
\]

Again, without loss of generality, suppose that $q\geq p$.

Then, from Step 3, $u \in L^s(\mathbb{R}^N)$, $v \in L^t(\mathbb{R}^N)$ for all $\underline{a} \leq s \leq \frac{pN}{\alpha}$ and $\underline{a} \leq t \leq \overline{a}$. Then observe that the points of the form $(s,t) = (\frac{pN}{\alpha}- \varepsilon, t)$ with $\overline{a} \leq t < \frac{qN}{\alpha}$ satisfies \eqref{precisamos} and \eqref{sermaior1}, and from \eqref{stoverline}
\[
\dfrac{1}{\overline{t}} - \dfrac{1}{t} \to \frac{q-2}{t} - \frac{2}{N} < 0,
\]
because, as proved in Step 3, $\frac{N}{2}(q-2)< \overline{a}$ and $t\geq \overline{a}$.  This again produces a uniform gap between, $\overline{t}$ and $t$. Then after a finite number of steps, we infer that $u \in L^s(\mathbb{R}^N), v \in L^t(\mathbb{R}^N)$ for all $(s,t) \in [\overline{a}, \frac{pN}{\alpha}) \times [\overline{a}, \frac{qN}{\alpha})$. This finishes the first target of this ascending process.

Then, from this point, we may argue as in the previous case to conclude the proofs of the ascending process and of Step 5.

\medbreak
\noindent {\bf Step 6.} It holds that $I_\alpha \ast |u|^p \in L^{\infty}(\mathbb{R}^N)$ and $I_\alpha \ast |v|^q \in L^{\infty}(\mathbb{R}^N)$. 

From $\alpha\in (0, N)$ and $(N-2)p< 2N$, it follows that $(\alpha-2)p<2\alpha$. This implies that $\frac{\alpha}{Np}>\frac{\alpha}{N}\left(1-\frac{1}{p}\right)-\frac{2}{N}$. Analogously, $\frac{\alpha}{Nq}>\frac{\alpha}{N}\left(1-\frac{1}{q}\right)-\frac{2}{N}$. Then, from Step 3, there exists $\varepsilon > 0$ such that $u\in L^{s}(\mathbb{R}^N)$ and $v\in L^{t}(\mathbb{R}^N)$, for all $s\in \left[\frac{Np}{\alpha}-\varepsilon, \frac{Np}{\alpha}+\varepsilon\right]$ and $t\in \left[\frac{Nq}{\alpha}-\varepsilon, \frac{Nq}{\alpha}+\varepsilon\right]$, respectively. 

Therefore, for $\delta >0$ suitably small, for all $x \in \mathbb{R}^N$, by Hölder inequality,
\begin{align*}
 |I_\alpha \ast |u|^p(x)| & = C\left( \int_{B_1(x)} \dfrac{1}{|x-y|^{N-\alpha}}|u(y)|^p dy + \int_{B_{1}^{c}(x)} \dfrac{1}{|x-y|^{N-\alpha}}|u(y)|^p dy\right)\\
 & \leq C\left(\int_{B_1(x)} \dfrac{1}{|x-y|^{N-\delta}} dy\right)^{\frac{N-\alpha}{N-\delta}}\left(\int_{B_1(x)} |u(y)|^{p\frac{N-\delta}{\alpha - \delta}} dy\right)^{\frac{\alpha-\delta}{N-\delta}}+ \\
 & + C\left(\int_{B_{1}^{c}(x)} \dfrac{1}{|x-y|^{N+\delta}} dy\right)^{\frac{N-\alpha}{N+\delta}}\left(\int_{B_{1}^{c}(x)} |u(y)|^{p\frac{N+\delta}{\alpha +\delta}} dy\right)^{\frac{\alpha+\delta}{N+\delta}}\\
 & \leq C_1 \left(\int_{\mathbb{R}^N} |u|^{p\frac{N-\alpha}{N-\delta}}dy \right)^{\frac{N-\alpha}{N-\delta}}+C_2 \left(\int_{\mathbb{R}^N} |u|^{p\frac{N+\delta}{\alpha +\delta}}dy\right)^{\frac{\alpha-\delta}{N-\delta}}.
\end{align*}
The kernel $I_\alpha \ast |v|^q $ is treated in a similar way.

\medbreak
\noindent {\bf Step 7.} It holds that $u\in W^{2, r}(\mathbb{R}^N)$ and $v\in W^{2, h}(\mathbb{R}^N)$, for all $r \in (\overline{r}, +\infty)$ and $h\in (\overline{h}, +\infty)$, with $\overline{r}$ and $\overline{h}$ as in Step 5 (as in cases (a)-(d) from the statement of this theorem).

Set $r_0, h_0\in (1, +\infty)$ by $\frac{1}{r_0}=\frac{\alpha}{N}\left(1-\frac{1}{p}\right)$ and $\frac{1}{h_0}=\frac{\alpha}{N}\left(1-\frac{1}{q}\right)$, respectively. Then, from Step 5, it follows that $u\in W^{2, r}(\mathbb{R}^N)$ for all $r\in (\overline{r}, r_0)$, and $v\in W^{2, h}(\mathbb{R}^N)$ for all $h\in (\overline{h}, h_0)$. 

Suppose that $u\in W^{2, r}(\mathbb{R}^N)$, for all $r\in (\overline{r}, r_n)$, and $v\in W^{2, h}(\mathbb{R}^N)$, for all $h\in (\overline{h}, h_n)$. Thus, from the Sobolev embeddings, $u\in L^{s}(\mathbb{R}^N)$ for all $s>\overline{r}$ such that $\frac{1}{s}>\frac{1}{r_n}-\frac{2}{N}$ and $v\in L^{t}(\mathbb{R}^N)$ for all $t>\overline{h}$ satisfying $\frac{1}{t}>\frac{1}{h_n}-\frac{2}{N}$. 

Hence, from Step 6, $(I_\alpha \ast |v|^q)|u|^{p-2}u \in L^{r}(\mathbb{R}^N)$ for every $r>1$ such that $\frac{(p-1)}{\overline{r}}> \frac{1}{r}>(p-1)\left(\frac{1}{r_n}-\frac{2}{N}\right) $ and $(I_\alpha \ast |u|^p)|v|^{q-2}v \in L^{h}(\mathbb{R}^N)$ for every $h>1$ satisfying  $\frac{(q-1)}{\overline{h}}> \frac{1}{h}>(q-1)\left(\frac{1}{h_n}-\frac{2}{N}\right) $. By the classical Calderón-Zygmund theory, $u\in W^{2,r}(\mathbb{R}^N)$ and $v\in W^{2,h}(\mathbb{R}^N)$ for every $r>\overline{r}$ such that $\frac{1}{r}>(p-1)\left(\frac{1}{r_n}-\frac{2}{N}\right)$ and for every $h>\overline{h}$ such that $\frac{1}{h}>(q-1)\left(\frac{1}{h_n}-\frac{2}{N}\right)$, respectively. 

Now, observe that, if $r_n \geq \frac{N}{2}$ nothing remains to be proved regarding the $W^{2,r}(\mathbb{R}^N)$-regularity of $u$. Otherwise, define
\begin{equation}\label{eqrn}
\dfrac{1}{r_{n+1}}= (p-1)\left(\frac{1}{r_n}-\frac{2}{N}\right).
\end{equation}
If $p\leq 2$, then $(r_n)$ is an increasing sequence and, after a finite number of steps, it follows that $r_n \geq \frac{N}{2}$. So, suppose $p>2$. Then observe that one can take $\frac{1}{r_n}<\frac{\alpha}{N}\left(1-\frac{1}{p}\right)$. Indeed, from \eqref{eqrn}, one can take
\[
\frac{1}{r_n} = (p-1)\left(\frac{1}{r}-\frac{2}{N}\right), \quad \text{with $r < r_0$ and $r\approx r_0$ }
\]
and
\[
(p-1)\left(\frac{1}{r_0}- \frac{2}{N}\right) < \frac{\alpha}{N}\left( 1 - \frac{1}{p} \right)
\]
because $(\alpha-2)p < 2\alpha$, as explained at the beginning of Step 6. Then, since $\frac{1}{r_n}<\frac{\alpha}{N}\left(1-\frac{1}{p}\right)$ and $\frac{1}{p}\geq \frac{N-2}{2N}$, we infer that
\begin{align*}
    \dfrac{1}{r_{n+1}} &< \dfrac{1}{r_n}+(p-1)\left[\dfrac{\alpha}{N}\left(1-\dfrac{2}{p}\right)-\dfrac{2}{N}\right]< \dfrac{1}{r_n}+(p-1)\left[\dfrac{\alpha}{N}\left(1-2\dfrac{N-2}{2N}\right)-\dfrac{2}{N}\right]\\
    &< \dfrac{1}{r_n}+(p-1)\dfrac{2}{N}\left(\dfrac{\alpha}{N}-1\right)< \dfrac{1}{r_n}.
\end{align*}
Therefore, after a finite number of steps, one reaches the conclusion. The procedure for $(h_n)$ is similar, and hence omitted.

\medbreak
\noindent{\bf Step 8.} It holds that $u, v\in L^{\infty}(\mathbb{R}^N) \cap C^2(\mathbb{R}^N)$ and $u \in C^{\infty}(\mathbb{R}^N \setminus u^{-1}(\{0\}))$ and $v \in C^{\infty}(\mathbb{R}^N \setminus v^{-1}(\{0\}))$.

First, from the Sobolev embedding $W^{2,\tau}(\mathbb{R}^N) \hookrightarrow L^{\infty}(\mathbb{R}^N)$ for $\tau> N/2$ and Step 7, we infer that $u,v \in L^{\infty}(\mathbb{R}^N)$. Then, by classical local elliptic regularity, as in the proof of \cite[Proposition 4.1]{Moroz2013}, we infer that $u \in C^2(\mathbb{R}^N)\cap C^{\infty}(\mathbb{R}^N \setminus u^{-1}(\{0\}))$ and $v \in C^2(\mathbb{R}^N)\cap C^{\infty}(\mathbb{R}^N \setminus v^{-1}(\{0\}))$.
\end{proof}

\section{Decay at infinity for ground state solutions}\label{sec-decay}

We begin this section by discussing on how the regularity of solutions, given by Theorem \ref{t10novo}, contribute for their decay at infinity. When studying the decay at infinity for $u$, the integrability of $v$ will take a central part, and vice versa. More precisely, to study the decay for $u$, one should observe that the decay of the kernel $I_\alpha \ast |v|^q$  strongly interfere in the arguments, making necessary to obtain estimates for it, which is given at Lemma \ref{decaimento}.

\subsection{Proof of Theorem \ref{decaimentosol} for the cases $p>2$ or $q>2$.}\label{duasquad}

In this part we consider the case when either $p>2$ or $q>2$. For the readers convenience, we start recalling \cite[Lemma 6.4]{Moroz2013}. 

\begin{lemma}\label{lpm2}
Let $\rho \geq 0$ and $W\in C^1((\rho, +\infty), \mathbb{R})$. If $\lim\limits_{s\rightarrow +\infty}W(s) > 0$ and for some $\beta>0$, $\lim\limits_{s\rightarrow + \infty}W'(s)s^{1+\beta} = 0$,
then there exists a nonnegative radial function $v:\mathbb{R}^N \setminus B_\rho \rightarrow \mathbb{R}$ such that
$$
-\Delta v + W v = 0 \ \ \text{in} \ \ \mathbb{R}^N \setminus B_\rho,
$$
and for some $\rho_0 \in (\rho, +\infty)$,
$$
\lim\limits_{|x|\rightarrow + \infty} v(x) |x|^{\frac{N-1}{2}}\exp \int_{\rho_0}^{|x|}\sqrt{W} = 1.
$$
\end{lemma}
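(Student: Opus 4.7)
The strategy is to reduce the problem to a second-order scalar ODE, then carry out a WKB/Riccati asymptotic analysis to construct a solution with the prescribed decay.

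\textbf{Step 1: Reduction to an ODE.} Since the operator $-\Delta + W$ is rotationally invariant, I look for a radial solution $v(x)=w(|x|)$, where $w \in C^{2}((\rho,+\infty))$ solves
\[
-w''(r) - \frac{N-1}{r}\, w'(r) + W(r)\, w(r) = 0, \qquad r > \rho.
\]
The Liouville substitution $w(r) = r^{-(N-1)/2}\, y(r)$ removes the first-order term and yields
\[
y''(r) = \widetilde W(r)\, y(r), \qquad \widetilde W(r) := W(r) + \frac{(N-1)(N-3)}{4\,r^{2}}.
\]
Set $W_{\infty}:=\lim_{s \to +\infty}W(s)>0$. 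Then $\widetilde W(r) \to W_{\infty}$ and, since the derivative of the $r^{-2}$ correction is $O(r^{-3})$, the hypothesis $W'(s)s^{1+\beta} \to 0$ transfers to $\widetilde W$.

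\textbf{Step 2: Riccati equation and fixed-point construction.} Introduce the Riccati variable $\psi(r) := -y'(r)/y(r)$, which satisfies $\psi' = \psi^{2} - \widetilde W$. Writing $\psi = \sqrt{\widetilde W} + \eta$ linearizes this around the constant-coefficient picture:
\[
\eta'(r) - 2\sqrt{\widetilde W(r)}\, \eta(r) = \eta(r)^{2} - \frac{\widetilde W'(r)}{2\sqrt{\widetilde W(r)}}.
\]
Pick $r_{0}$ so large that $\widetilde W > 0$ on $[r_{0},+\infty)$ and set $\phi(r) := \int_{r_{0}}^{r}\sqrt{\widetilde W}$. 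Imposing $\eta(r) \to 0$ as $r \to +\infty$ converts the ODE to the integral equation
\[
\eta(r) = e^{2\phi(r)} \int_{r}^{+\infty} e^{-2\phi(s)} \left[\frac{\widetilde W'(s)}{2\sqrt{\widetilde W(s)}} - \eta(s)^{2}\right] ds.
\]
I would solve this by Banach's fixed-point theorem on the weighted space $X := \{\eta \in C([r_{0},+\infty)) : \sup_{r \geq r_{0}} |\eta(r)|\, r^{1+\beta/2} < +\infty\}$. The key estimate is that, thanks to $\widetilde W'(s) = o(s^{-1-\beta})$, an integration by parts against the rapidly decaying factor $e^{-2\phi(s)}$ (using $-\tfrac{d}{ds}e^{-2\phi(s)} = 2\sqrt{\widetilde W(s)}\,e^{-2\phi(s)}$) gives both the pointwise smallness of the driving term and a Lipschitz estimate for the nonlinear part, provided $r_{0}$ is chosen large enough.

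\textbf{Step 3: Asymptotic formula and normalization.} Integrating by parts once in the integral representation of $\eta$ yields
\[
\eta(r) = \frac{\widetilde W'(r)}{4\,\widetilde W(r)} + o\bigl(r^{-1-\beta}\bigr),
\]
hence $\int_{r_{0}}^{r}\eta = \tfrac{1}{4}\log\bigl(\widetilde W(r)/\widetilde W(r_{0})\bigr) + C_{1} + o(1)$. Since $y(r) = y(r_{0})\exp\bigl(-\int_{r_{0}}^{r}\psi\bigr)$, one obtains
\[
y(r) \sim C_{2}\, \widetilde W(r)^{-1/4}\exp\Bigl(-\!\int_{r_{0}}^{r} \sqrt{\widetilde W}\Bigr), \qquad r \to +\infty.
\]
Finally, $\sqrt{\widetilde W}-\sqrt W = \frac{(N-1)(N-3)}{4r^{2}(\sqrt{\widetilde W}+\sqrt W)} = O(r^{-2})$ is integrable on $[r_{0},+\infty)$, so $\sqrt{\widetilde W}$ may be replaced by $\sqrt W$ in the exponent at the price of a further multiplicative constant, and $\widetilde W(r)^{-1/4} \to W_{\infty}^{-1/4}$. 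Undoing $w = r^{-(N-1)/2}y$, rescaling $v$ by a positive constant (legitimate by linearity), and enlarging $\rho_{0}$ if necessary to preserve the sign of the constructed solution on $[\rho_{0},+\infty)$ deliver the stated limit equal to $1$.

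\textbf{Main obstacle.} The delicate point is the fixed-point construction in Step 2 under only a one-derivative assumption on $W$. The hypothesis $W'(s)s^{1+\beta} \to 0$ must do double duty: first, to make the driving term $\widetilde W'/(2\sqrt{\widetilde W})$ integrable against the WKB weight and to extract the leading order $\widetilde W'/(4\widetilde W)$ via integration by parts; and second, to ensure that the remainder after integration by parts is still integrable and dominates the nonlinear contribution $\eta^{2}$ in the weighted norm. No hypothesis on $W''$ is needed because the integration by parts is performed only once.
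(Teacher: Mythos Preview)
The paper does not prove this lemma: it is quoted verbatim as \cite[Lemma~6.4]{Moroz2013} and used as a black box. Your WKB/Riccati route is the standard one for such Liouville--Green statements and is essentially what lies behind the cited result, so there is no genuine divergence in strategy to discuss.

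Your argument is correct in outline, with two small points worth tightening. First, in Step~3 the displayed pointwise expansion $\eta(r)=\widetilde W'(r)/(4\widetilde W(r))+o(r^{-1-\beta})$ is stronger than what your single integration by parts actually gives: the boundary term is indeed $\widetilde W'/(4\widetilde W)$, but the remaining integral contains $\bigl(\widetilde W'/(4\widetilde W)\bigr)'$, which involves $\widetilde W''$, contrary to your closing remark. Fortunately you do not need this refinement: the fixed-point bound $\eta\in X$ already yields $\eta(r)=O(r^{-1-\beta/2})$, hence $\int_{r_0}^{\infty}\eta$ converges, and since $\widetilde W\to W_\infty>0$ the factor $\widetilde W^{-1/4}$ is just a constant. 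That alone gives $y(r)\exp\!\int_{r_0}^{r}\sqrt{\widetilde W}\to C_2>0$, which is all the lemma asserts after normalisation. Second, the nonnegativity of $v$ on the \emph{whole} domain $\mathbb{R}^N\setminus B_\rho$ (not only for $|x|\geq r_0$) deserves one line: with $y(r_0)>0$ and $y'(r_0)=-\psi(r_0)y(r_0)<0$, solving $y''=\widetilde W y$ backwards keeps $y$ positive wherever $\widetilde W>0$; since $W\in C^1$ and only $\lim W>0$ is assumed, one either enlarges $\rho$ so that $\widetilde W>0$ on $(\rho,\infty)$, or notes that in every application in the paper $\rho$ is already chosen so that $W>0$ on the relevant exterior domain.
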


\begin{proof}[\textbf{Proof of Theorem \ref{decaimentosol}}] Here we follows the lines from the proof of \cite[Proposition 6.3]{Moroz2013}, also supported by \cite[Lemma 2.2]{Lieb1983}.

    \medbreak
    
     \noindent {\bf Case $p>2$.} Let $(u, v)$ be a ground state solution for \eqref{P}. From Theorems \ref{t3} and \ref{t5}, $u$ and $v$ are positive, radially symmetric and radially decreasing (with respect to zero). In the next lines we prove the decay at infinity for $u$.
     
     From \cite[Lemma 2.2]{Lieb1983}, 
     \begin{equation}\label{nucdecres}
I_{\alpha}\ast|v|^{q} \ \ \text{and} \ \ I_{\alpha}\ast |u|^p \ \ \text{are radially symmetric and radially decreasing with respect to zero.}
     \end{equation} 
    Since $u\in L^{r}(\mathbb{R}^N)$ (Theorem \ref{t10novo}) and is radially decreasing, $I_{\alpha}\ast|v|^{q} \in L^{\beta}(\mathbb{R^N})$ (see \eqref{eq13} with $t=\theta_2q$) and is radially decreasing, it follows that
    $$
    \lim\limits_{|x|\rightarrow + \infty} (I_\alpha \ast |v|^q)(x)|u(x)|^{p-2}=0.
    $$
    Thus, there exists $\rho > 0$ such that, for $x \in \mathbb{R}^N$ with $|x|\geq \rho$, 
    $
    \dfrac{2p}{p+q}(I_\alpha \ast |v|^q)(x)|u(x)|^{p-2} \leq \dfrac{3}{4}.
    $
    Consequently, from system \eqref{P},
    $$
    -\Delta u + \dfrac{1}{4}u \leq 0.
    $$
    From Lemma \ref{lpm2} with $W=\frac{1}{4}$, let $v \in C^2(\mathbb{R}^N \setminus B_\rho, \mathbb{R})$ be such that
    $$
    \left\{ \begin{array}{lll}
        -\Delta v + \dfrac{1}{4}v = 0, \mbox{ \ if \ } x \in \mathbb{R}^N \setminus B_\rho, \vspace{5pt}\\
        v(x) = u(x),  \ \   \mbox{ \ if \ } x \in \partial B_\rho, \ \   \mbox{ \ and \ }\lim\limits_{|x|\rightarrow + \infty}v(x) = 0.
    \end{array} \right.
    $$
    Lemma \ref{lpm2} also guarantees that, for some positive constant $\mu$,
    $$
    v(x) \leq \dfrac{\mu}{|x|^{\frac{N-1}{2}}}e^{-\frac{|x|}{2}}, \qquad \forall \, x \in \mathbb{R}^N \setminus B_\rho.
    $$
    Therefore, from the comparison principle,
    $$
    u(x)\leq v(x) \leq \dfrac{\mu}{|x|^{\frac{N-1}{2}}}e^{-\frac{|x|}{2}}, \qquad \forall \, x \in \mathbb{R}^N \setminus B_\rho.
    $$
    So, there exists a real number $\nu$ such that, for every $x \in \mathbb{R}^N \setminus B_\rho$,
    $$
    \dfrac{2p}{p+q}(I_\alpha \ast |v|^q)(x)|u(x)|^{p-2}\leq \nu e^{-\frac{p-2}{2}|x|}\,, 
    $$
    which implies (again from system \eqref{P})
    $$
    -\Delta u + u \geq 0 \geq -\Delta u + W u \quad \text{in} \ \  \mathbb{R}^N \setminus B_\rho,
    $$
    where $W \in C^1 (\mathbb{R}^N \setminus B_\rho)$ is given by 
    $$
    W(x) = 1 - \nu e^{-\frac{p-2}{2}|x|} \qquad \forall \, x\in \mathbb{R}^N \setminus B_\rho.
    $$
    For this $W$, consider now $\underline{u}, \overline{u}\in C^2(\mathbb{R}^N \setminus B_\rho, \mathbb{R})$ solutions of
    $$
    \left\{
    \begin{array}{lll}
         -\Delta \underline{u}+ \underline{u}= 0, \mbox{ \ if \ } x \in \mathbb{R}^N \setminus B_\rho, \vspace{5pt}\\
        \underline{u}(x) = u(x) \ \ \ ,  \mbox{ \ if \ } x \in \partial B_\rho, \vspace{5pt}\\
        \lim\limits_{|x|\rightarrow + \infty}\underline{u} = 0,
    \end{array}
    \right.
  \qquad \text{and} \qquad
    \left\{
    \begin{array}{lll}
         -\Delta \overline{u}+ W\overline{u}= 0, \mbox{ \ if \ } x \in \mathbb{R}^N \setminus B_\rho, \vspace{5pt}\\
        \overline{u}(x) = u(x) \ \ \ \ \ \ \ ,  \mbox{ \ if \ } x \in \partial B_\rho, \vspace{5pt}\\
        \lim\limits_{|x|\rightarrow + \infty}\overline{u} = 0.
    \end{array}
    \right.
    $$
    Then, by the comparison principle, $\underline{u}\leq u \leq \overline{u}$ in $\mathbb{R}^N \setminus B_\rho$ and, from Lemma \ref{lpm2}, it follows that
    \begin{align}\label{eqlim}
        0 & < \lim\limits_{|x|\rightarrow +\infty}\underline{u}(x)|x|^{\frac{N-1}{2}}e^{|x|} \leq \liminf\limits_{|x|\rightarrow + \infty} u(x)|x|^{\frac{N-1}{2}}e^{|x|} \nonumber \\
        &\leq \limsup\limits_{|x|\rightarrow + \infty} u(x)|x|^{\frac{N-1}{2}}e^{|x|} \leq \lim\limits_{|x|\rightarrow +\infty}\overline{u}(x)|x|^{\frac{N-1}{2}}e^{|x|}<+\infty.
    \end{align}
    It remains to show that $u(x)|x|^{\frac{N-1}{2}}e^{|x|}$ has a limit as $|x|\to \infty$. Since $u$ and $\underline{u}$ are both radial and radially decreasing functions, by the comparison principle,
    $$
    \dfrac{u(s)}{\underline{u}(s)} \leq \dfrac{u(t)}{\underline{u}(t)}, \quad \forall  s\leq t, \ \ \text{with} \ \ s,t \in (\rho, +\infty), 
    $$
    that is, the function $\frac{u}{\underline{u}}$ is nondecreasing. Moreover, equation \eqref{eqlim} shows that $\dfrac{u}{\underline{u}}$ is bounded. Therefore, $\dfrac{u}{\underline{u}}$ has a finite limit and 
    $$
    \lim\limits_{|x|\rightarrow +\infty} u(x)|x|^{\frac{N-1}{2}}e^{|x|} =  \lim\limits_{|x|\rightarrow +\infty} \dfrac{u(x)}{\underline{u}(x)} \lim\limits_{|x|\rightarrow +\infty} \underline{u}(x)|x|^{\frac{N-1}{2}}e^{|x|} \in (0, +\infty).
    $$
    
    \medbreak
    
     \noindent {\bf Case $q>2$.} The same arguments are used to prove the decay at infinity for $v$. 
    \end{proof}

\subsection{Proof of Theorem \ref{decaimentosol} for the cases $p=2$ or $q=2$.}\label{sec2}
We start recalling \cite[Lemma 6.2]{Moroz2013}.

\begin{lemma}\label{auxiliar} 
Let $\alpha\in(0, N)$, $\beta \in (N, +\infty)$ and $f\in L^\infty(\mathbb{R}^N)$. If
$$
\sup\limits_{x\in \mathbb{R}^N} |f(x)||x|^\beta < +\infty,
$$
then there exists a constant $\overline{K}>0$ such that for every $x\in \mathbb{R}^N$
$$
\left| \int_{\mathbb{R}^N} \dfrac{f(y)}{|x-y|^{N-\alpha}} dy - \dfrac{1}{|x|^{N-\alpha}}\int_{\mathbb{R}^N} f(y) dy \right| \leq \dfrac{\overline{K}}{|x|^{N-\alpha}}\left(\dfrac{1}{1+|x|}+\dfrac{1}{1+|x|^{\beta -N}}\right).
$$
\end{lemma}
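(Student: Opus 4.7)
The plan is to rewrite the left-hand side as
$$E(x) := \int_{\mathbb{R}^N} f(y)\left[\frac{1}{|x-y|^{N-\alpha}}-\frac{1}{|x|^{N-\alpha}}\right]dy,$$
and to observe first that the hypothesis $\sup_y|f(y)||y|^\beta<\infty$ combined with $\beta>N$ and $f\in L^\infty$ yields $|f(y)|\leq M(1+|y|)^{-\beta}$ and in particular $f\in L^1(\mathbb{R}^N)$. This already shows that $E$ is bounded on any compact set away from the origin, so the claimed inequality is trivial in the regime $|x|\leq 1$: one simply uses that $(1+|x|)^{-1}+(1+|x|^{\beta-N})^{-1}$ is bounded below by a positive constant on $\{|x|\leq 1\}$ while $|x|^{-(N-\alpha)}$ blows up at the origin. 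The real work concerns the regime $|x|\geq 1$.

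In that regime I would partition $\mathbb{R}^N=A_1\cup A_2\cup A_3$ with $A_1=\{|y|\leq |x|/2\}$, $A_3=\{|x|/2\leq|y|\leq 2|x|\}$ and $A_2=\{|y|\geq 2|x|\}$, each adapted to a different regime of the integrand. On $A_1$, since $|x-ty|\geq |x|/2$ for all $t\in[0,1]$, the fundamental theorem of calculus applied to $z\mapsto |z|^{-(N-\alpha)}$ gives
$$\left|\frac{1}{|x-y|^{N-\alpha}}-\frac{1}{|x|^{N-\alpha}}\right|\leq \frac{C|y|}{|x|^{N-\alpha+1}},$$
so the $A_1$-contribution is at most $C|x|^{-(N-\alpha+1)}\int_{|y|\leq|x|/2}|f(y)||y|dy$; the first-moment integral is uniformly bounded when $\beta>N+1$ and grows at worst like $|x|^{N+1-\beta}$ when $N<\beta<N+1$, so in both cases the $A_1$-piece fits inside $C|x|^{-(N-\alpha+1)}+C|x|^{\alpha-\beta}$. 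On $A_2$, the inequality $|x-y|\geq|y|/2$ yields $\int_{A_2}|f(y)||x-y|^{\alpha-N}dy\leq C\int_{|y|\geq 2|x|}|y|^{\alpha-N-\beta}dy\leq C|x|^{\alpha-\beta}$, and the same bound handles $|x|^{-(N-\alpha)}\int_{A_2}|f|$. On the intermediate annulus $A_3$ one uses the pointwise bound $|f(y)|\leq M|y|^{-\beta}\leq C|x|^{-\beta}$, together with $\int_{|z|\leq 3|x|}|z|^{\alpha-N}dz=C|x|^\alpha$, to bound the singular piece by $C|x|^{\alpha-\beta}$; the constant piece is controlled in the same way.

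Summing the three contributions gives, for $|x|\geq 1$,
$$|E(x)|\leq \frac{C}{|x|^{N-\alpha+1}}+\frac{C}{|x|^{\beta-\alpha}}=\frac{1}{|x|^{N-\alpha}}\left(\frac{C}{|x|}+\frac{C}{|x|^{\beta-N}}\right),$$
which, combined with the bounded-$|x|$ case, proves the lemma. The main technical nuisance I foresee is the borderline value $\beta=N+1$, where the first moment $\int|f||y|dy$ is logarithmically divergent; this produces an extra $\log|x|$ factor in the $A_1$-estimate that matches neither of the two target terms exactly. I would overcome it either by using a finer cutoff $|y|\leq|x|^\gamma$ for some $\gamma<1$, so that the divergent moment is suppressed by the extra factor of $|x|^\gamma$, or by expanding one order higher in Taylor and using the resulting decay to trade the logarithm against a genuine polynomial gain in $|x|$.
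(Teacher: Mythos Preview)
The paper does not supply a proof of this lemma; it is quoted from \cite[Lemma~6.2]{Moroz2013}. Your three-region decomposition $\{|y|\le |x|/2\}$, $\{|x|/2\le|y|\le 2|x|\}$, $\{|y|\ge 2|x|\}$ together with the mean-value estimate on the inner ball is the standard route and gives the stated bound for every $\beta\neq N+1$. (For $|x|\le 1$ your justification is a little terse: the point is that $\int |f(y)|\,|x-y|^{\alpha-N}\,dy$ is \emph{uniformly} bounded there, by splitting at $|x-y|=1$, so $|E(x)|\le C+C|x|^{-(N-\alpha)}\le C'|x|^{-(N-\alpha)}$.)

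Your worry about $\beta=N+1$ is well placed, but neither of your two fixes resolves it in the generality of the stated hypotheses. Shrinking the inner cutoff to $|y|\le|x|^{\gamma}$ with $\gamma<1$ still leaves $\gamma\log|x|$ from the first moment, while the complementary shell $|x|^{\gamma}\le|y|\le|x|/2$ now only yields $C|x|^{-(N-\alpha+\gamma)}$, which is too weak. Going one order higher in Taylor works \emph{only} when the first-order term $\int f(y)\,(x\cdot y)\,dy$ vanishes, i.e.\ essentially when $f$ is radial. In fact, for $f(y)=y_1|y|^{-(N+2)}\chi_{\{|y|\ge 1\}}$ one has $f\in L^\infty$, $\sup|f|\,|y|^{N+1}<\infty$, $\int f=0$, and a direct computation gives $E(Re_1)\sim c\,R^{-(N-\alpha+1)}\log R$, so the inequality as written fails at $\beta=N+1$ for general $f$. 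This is harmless for the present paper: in every application (Lemma~\ref{decaimento}) one has $f=|u|^p$ or $f=|v|^q$ with $u,v$ radial ground states, the first-order term vanishes by symmetry, and your second-order argument then goes through cleanly. The issue is therefore with the generality of the statement, not with your method.
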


\medbreak 
Next we provide an asymptotic decay for $I_\alpha\ast |u|^p$ and $I_\alpha\ast |v|^q$.

\begin{lemma}\label{decaimento}
Let $N\geq 1$, $\alpha\in (0, N)$, $p, q$ satisfying \eqref{H1}, $r^\ast$ and $h^\ast$ as defined in Theorem \ref{t10novo} and $(u, v)$ be a ground state solution of \eqref{P}.
\begin{itemize}
    \item[(a)] If 
    $$
    q \geq \dfrac{\alpha}{N}p + 1 \mbox{ \ \ \ and \ \ \ } q \leq \dfrac{N}{\alpha}p - \dfrac{N}{\alpha},
    $$
    then for all $0<\varepsilon < \min\{p-1, q-1\}$, there are positive constants $K_1, K_2\in \mathbb{R}$ such that 
    \begin{equation}\label{decnuc1}
        \left| I_\alpha\ast |u|^p(x)-I_\alpha(x)\int_{\mathbb{R}^N} |u|^p \right| \leq \dfrac{K_1}{|x|^{N-\alpha}}\left(\dfrac{1}{1+|x|}+\dfrac{1}{1+|x|^{\frac{pN}{1+\varepsilon}-N}}\right) \qquad \text{and}
    \end{equation} 
    \begin{equation}\label{decnuc2}
      \left| I_\alpha\ast |v|^q(x)-I_\alpha(x)\int_{\mathbb{R}^N} |v|^q \right| \leq \dfrac{K_2}{|x|^{N-\alpha}}\left(\dfrac{1}{1+|x|}+\dfrac{1}{1+|x|^{\frac{qN}{1+\varepsilon}-N}}\right).  
    \end{equation}

    \item[(b)]  If
    $$
    \dfrac{2N}{2N-\alpha}<p,q< \dfrac{N}{N-\alpha},
    $$
    then for all $0<\varepsilon< \min\{p - r^{\ast}, q- h^{\ast}\}$, there exists positive constants $K_3, K_4\in \mathbb{R}$ such that
    \begin{equation}\label{decnuc3}
        \left| I_\alpha\ast |u|^p(x)-I_\alpha(x)\int_{\mathbb{R}^N} |u|^p \right| \leq \dfrac{K_3}{|x|^{N-\alpha}}\left(\dfrac{1}{1+|x|}+\dfrac{1}{1+|x|^{\frac{pN}{r^\ast +\varepsilon}-N}}\right) \qquad \text{and}
    \end{equation} 
    \begin{equation}\label{decnuc4}
      \left| I_\alpha\ast |v|^q(x)-I_\alpha(x)\int_{\mathbb{R}^N} |v|^q \right| \leq \dfrac{K_4}{|x|^{N-\alpha}}\left(\dfrac{1}{1+|x|}+\dfrac{1}{1+|x|^{\frac{qN}{h^\ast + \varepsilon}-N}}\right).  
    \end{equation}
    \item[(c)] If
     $$
    q \geq \dfrac{N}{N-\alpha},  \ \ \ \ q > \dfrac{N}{\alpha}p - \dfrac{N}{\alpha} \mbox{ \ \ \ and \ \ \ } \frac{2-p}{N-\alpha}<\frac{pq}{N+\alpha p},
    $$
    then for all $0<\varepsilon <  q-1$, there exists a positive constants $K_5\in \mathbb{R}$ such that 
    \begin{equation}\label{decnuc6}
      \left| I_\alpha\ast |v|^q(x)-I_\alpha(x)\int_{\mathbb{R}^N} |v|^q \right| \leq \dfrac{K_5}{|x|^{N-\alpha}}\left(\dfrac{1}{1+|x|}+\dfrac{1}{1+|x|^{\frac{qN}{1+\varepsilon}-N}}\right).  
    \end{equation}
    \item[(d)] If
      $$
    q < \dfrac{\alpha}{N}p + 1, \ \ \ \  p \geq \dfrac{N}{N-\alpha} \mbox{ \ \ \ and \ \ \ } \frac{2-q}{N-\alpha}<\frac{pq}{N+\alpha q},
    $$
    then for all $0<\varepsilon <  p-1$, there exists a positive constants $K_6 \in \mathbb{R}$ such that 
    \begin{equation}\label{decnuc7}
        \left| I_\alpha\ast |u|^p(x)-I_\alpha(x)\int_{\mathbb{R}^N} |u|^p \right| \leq \dfrac{K_6}{|x|^{N-\alpha}}\left(\dfrac{1}{1+|x|}+\dfrac{1}{1+|x|^{\frac{pN}{1+\varepsilon}-N}}\right).
    \end{equation} 
\end{itemize}
\end{lemma}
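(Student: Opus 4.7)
The plan is to apply the pointwise estimate of Lemma \ref{auxiliar} to $f = |u|^p$ and to $f = |v|^q$: once the hypothesis $\sup_{x\in\mathbb{R}^N} |f(x)||x|^\beta < \infty$ is verified for a suitable $\beta > N$, the conclusion of that lemma, after absorbing the normalization constant of $I_\alpha$, yields exactly the inequalities \eqref{decnuc1}--\eqref{decnuc7}, with $\beta - N$ appearing in the exponent of the second fractional summand.

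To verify the hypothesis I would couple radial monotonicity with integrability. By Theorem \ref{t5}, $u$ and $v$ are radial and radially decreasing about a common center, which we may translate to the origin. If $u \in L^t(\mathbb{R}^N)$, then monotonicity gives
\[
|u(x)|^t \, |B_{|x|}| \leq \int_{B_{|x|}} |u|^t \leq \|u\|_t^t ,
\]
so $|u(x)| \leq C_t \,|x|^{-N/t}$ for $|x| \geq 1$; together with $u \in L^\infty(\mathbb{R}^N)$ from Theorem \ref{t10novo}, this yields $\sup_{x\in\mathbb{R}^N} |u(x)|^p |x|^{pN/t} < \infty$. Choosing $\beta = pN/t$, the requirement $\beta > N$ becomes $t < p$; the analogous statement for $|v|^q$ demands $t < q$ and produces $\beta = qN/t$.

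The four cases then reduce to extracting, from the regularity Theorem \ref{t10novo}, an admissible $t$ strictly below $p$ (or $q$). In case (a), Theorem \ref{t10novo}(a) provides $u, v \in W^{2, r}(\mathbb{R}^N)$ for every $r > 1$, hence $u, v \in L^t(\mathbb{R}^N)$ for every $t > 1$ by Sobolev embedding and interpolation with $L^\infty$; choosing $t = 1 + \varepsilon$ with $\varepsilon < \min\{p-1, q-1\}$ delivers \eqref{decnuc1}--\eqref{decnuc2}. In case (b), under the stronger constraint $p, q < N/(N-\alpha)$ combined with $p, q > 2N/(2N-\alpha)$, Theorem \ref{t10novo}(b) supplies $u \in L^t$ for $t > r^\ast$ and $v \in L^t$ for $t > h^\ast$, and the inequalities $r^\ast < p$, $h^\ast < q$ follow from $p(2N-\alpha) > 2N$ (and symmetrically); taking $t = r^\ast + \varepsilon$ and $t = h^\ast + \varepsilon$ is then admissible and produces \eqref{decnuc3}--\eqref{decnuc4}. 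In cases (c) and (d), the additional hypothesis on the position of $(p,q)$ places us in the sub-case of Theorem \ref{t10novo}(c) (respectively (d)) in which the integrability index of $v$ (respectively $u$) can be taken as $1 + \varepsilon$ for every small $\varepsilon > 0$, whence \eqref{decnuc6} (respectively \eqref{decnuc7}) follows directly.

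The main obstacle is the bookkeeping: one must match, region by region, the explicit integrability indices $r^\ast, h^\ast, h^{\ast\ast}, r^{\ast\ast\ast}$ supplied by Theorem \ref{t10novo} with the exponents appearing in the statement of Lemma \ref{decaimento}, and verify that each such index lies strictly below the corresponding $p$ or $q$ so that Lemma \ref{auxiliar} is applicable with $\beta > N$. These are purely algebraic checks driven by \eqref{H1} and the geometric description of each region in the $(p,q)$-plane.
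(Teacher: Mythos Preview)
Your proposal is correct and follows essentially the same route as the paper: both arguments exploit radial monotonicity together with the $L^\tau$-integrability from Theorem \ref{t10novo} to obtain the pointwise bound $|w(x)|\leq C|x|^{-N/\tau}$, then feed $\beta = tN/\tau$ (with $t=p$ or $t=q$) into Lemma \ref{auxiliar}, choosing $\tau$ case by case as $1+\varepsilon$, $r^\ast+\varepsilon$, or $h^\ast+\varepsilon$ according to the region. The only additional remark in the paper is the explicit verification that $p/r^\ast>1$ and $q/h^\ast>1$ in case (b), which you correctly flag as an algebraic check.
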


\begin{proof}
    Let $w\geq0$ be a radial and radially decreasing function, and suppose that $w\in L^\tau (\mathbb{R}^N)$ for some $\tau > 1$. If $\omega_N$ denotes the volume of the unit ball in $\mathbb{R}^N$, then
    \begin{align*}
        w(x) & \leq \dfrac{1}{\omega_N |x|^N} \int_{B_{|x|}} |w| dy \leq \dfrac{1}{\omega_N |x|^N} \left(\int_{B_{|x|}} |w|^\tau dy\right)^{\frac{1}{\tau}}\left(\omega_N |x|^N\right)^{\frac{\tau -1}{\tau}} \leq \dfrac{1}{\omega_{N}^{\frac{1}{\tau}}}\dfrac{1}{|x|^{\frac{N}{\tau}}}\left(\int_{\mathbb{R}^N} |w|^\tau dy \right)^{\frac{1}{\tau}}.
    \end{align*}
    Then, given $t>0$,
    \[
w^t(x) |x|^{\frac{tN}{\tau}} \leq \dfrac{1}{\omega_N^{\frac{t}{\tau}}}\left(\int_{\mathbb{R}^N} |w|^\tau dy \right)^{\frac{t}{\tau}}.
    \]
Next, this last inequality is used for the cases ``$w=u, \,t=p$'' and ``$w=v, \, t=q$''. In both cases, in order to apply Lemma \ref{auxiliar}, it is necessary to have $\frac{t}{\tau}>1$ and it is convenient to make it as large as possible.

\medbreak
    \noindent {\bf Case (a).} From Theorem \ref{t10novo} (a), for all $0 < \varepsilon < \min\{p-1, q-1\}$, one can choose $\tau = 1+\varepsilon$, for $u$ and for $v$. Then \eqref{decnuc1} and \eqref{decnuc2} follow from Lemma \ref{auxiliar}.

\medbreak
    \noindent {\bf Case (b).} Let $r^*$ and $h^*$ be as in Theorem \ref{t10novo} (b). Then observe that 
    \[
      \dfrac{p}{r^*}>1  \ \ \text{and} \ \ \dfrac{q}{h^*}>1.
    \]
Let $0<\varepsilon< \min\{p - r^{\ast}, q- h^{\ast}\}$ and choose $\tau = r^*+ \varepsilon$ for $u$ and $\tau = h^{\ast}+ \varepsilon$ for $v$. Then \eqref{decnuc3} and \eqref{decnuc4}  follows from Lemma \ref{auxiliar}.

\medbreak
    \noindent {\bf Case (c).} From Theorem \ref{t10novo} (c), for all $0 < \varepsilon < q-1$, one can choose $\tau = 1+\varepsilon$ for $v$. Then \eqref{decnuc6} follow from Lemma \ref{auxiliar}.

\medbreak
    \noindent {\bf Case (d).} From Theorem \ref{t10novo} (d), for all $0 < \varepsilon < p-1$, one can choose $\tau = 1+\varepsilon$ for $u$. Then \eqref{decnuc7} follow from Lemma \ref{auxiliar}.
\end{proof}

\begin{proof}[\textbf{Proof of Theorem \ref{decaimentosol}}] Let us consider the case $p=2$. First observe that, from Lemma \ref{decaimento} (a), (b) or (c), there exist positive constants $\mu$, $\sigma$ and $\mathcal{A}$ such that
    $$
    -\dfrac{\mu}{|x|^{N-\alpha+\sigma}} \leq \dfrac{2p}{p+q}(I_\alpha \ast |v|^q)(x) - \dfrac{\mathcal{A}^{N-\alpha}}{|x|^{N-\alpha}}\leq \dfrac{\mu}{|x|^{N-\alpha+\sigma}}, \quad \forall \, x \in \mathbb{R}^N,
$$
where the value $\sigma>0$ is given ahead, for each case. It is proved ahead, at Section \ref{secsub} that these inequalities also hold in the region (d) from Lemma \ref{decaimento}, but with a different argument, for which it is needed to impose $q > \frac{2N}{2N-\alpha}$. Notice that in region (d), $p=2$ implies $q<2$.

Consider the auxiliary functions $\underline{W}, \overline{W}\in C^1(\mathbb{R}^N \setminus \{0\})$, defined by
    $$
    \underline{W}(x) = 1 - \dfrac{\mathcal{A}^{N-\alpha}}{|x|^{N-\alpha}} + \dfrac{\mu}{|x|^{N-\alpha+\sigma}} \quad \mbox{ and } \quad \overline{W}(x) = 1 - \dfrac{\mathcal{A}^{N-\alpha}}{|x|^{N-\alpha}} - \dfrac{\mu}{|x|^{N-\alpha+\sigma}}.
    $$
    Then, from \eqref{P},
    $$
    -\Delta u + \overline{W}u \leq 0 \leq -\Delta u + \underline{W}u \ \ \text{in} \ \mathbb{R}^N \setminus \{0\}.
    $$
Let $\underline{u}, \overline{u}\in C^2(\mathbb{R}^N \setminus \{B_1\})$ be solutions of 
    $$
    \left\{
    \begin{array}{lll}
         -\Delta \underline{u}+ \underline{W}\, \underline{u}= 0, \mbox{ \ if \ } x \in \mathbb{R}^N \setminus B_1, \vspace{5pt}\\
        \underline{u}(x) = u(x) \ \ \ ,  \mbox{ \ if \ } x \in \partial B_1, \vspace{5pt}\\
        \lim\limits_{|x|\rightarrow + \infty}\underline{u} = 0,
    \end{array}
    \right.
   \qquad \text{and} \qquad
    \left\{
    \begin{array}{lll}
         -\Delta \overline{u}+ \overline{W}\overline{u}= 0, \mbox{ \ if \ } x \in \mathbb{R}^N \setminus B_1, \vspace{5pt}\\
        \overline{u}(x) = u(x) \ \ \ \ \ \ \ ,  \mbox{ \ if \ } x \in \partial B_1, \vspace{5pt}\\
        \lim\limits_{|x|\rightarrow + \infty}\overline{u} = 0.
    \end{array}
    \right.
    $$
Hence, from Lemma \ref{lpm2}, for $\rho$ large enough,
    $$
    \lim\limits_{|x|\rightarrow +\infty}\!\underline{u}(x)|x|^{\frac{N-1}{2}} exp\int_{\rho}^{|x|}\!\!\!\sqrt{\underline{W}(t)} dt \in (0, +\infty) \ \ \text{and} \ \   \lim\limits_{|x|\rightarrow +\infty}\!\overline{u}(x)|x|^{\frac{N-1}{2}}exp\int_{\rho}^{|x|}\!\!\!\sqrt{\overline{W}(t)} dt \in (0, +\infty).
    $$
    In particular, this implies that $\overline{u} \in L^1(\mathbb{R}^N)$. Since $0 \leq u \leq \overline{u}$, we infer that $u \in L^1(\mathbb{R}^N)$.

In what follows, the condition $N-\alpha + \sigma >1$ is required. So, the next step is to show that it holds.

\medbreak
\noindent {\bf Case $q>2$.} In this case, as proved at Section \ref{duasquad}, the decay at infinity for $v$ guarantees that $v\in L^1(\mathbb{R}^N)$. Going back to the start of the proof of Lemma \ref{decaimento} (since $(q-1)N >1$), it follows that one can take $\sigma=1$, and then the condition in verified.

\medbreak 
\noindent {\bf Case $q=2$.} In this case, as explained few lines above (for $u$), $v\in L^1(\mathbb{R}^N)$. Hence, going back to the start of the proof of Lemma \ref{decaimento} (since $(q-1)N= N \geq 1$), it follows that one can take $\sigma=1$, and then the condition in verified.

\medbreak 
\noindent {\bf Case $q<2$.} First, observe that in this case $(p,q)$ cannot be at region (c), since $q>p$ at region (c). As proved ahead at Section \ref{secsub}, in any region (a), (b) or (d)
\[
\lim \limits_{|x| \to \infty} |v(x)|^q |x|^{\frac{q(N-\alpha)}{2-q}} \in (0, \infty).
\]
So, in order to apply Lemma \ref{auxiliar}, it must hold that
\[
\dfrac{q(N-\alpha)}{2-q} > N, \ \ \text{that is,} \ \ q> \dfrac{2N}{2N-\alpha},
\]
and then $\sigma = \min\{ 1, \frac{q(N-\alpha)}{2-q} - N\}$. Therefore, it is necessary to require
\[
\dfrac{q(N-\alpha)}{2-q} - \alpha > 1,  \ \ \text{that is,} \ \ q >\dfrac{2(\alpha +1)}{N + 1}.
\]

Next, it is shown that this inequality holds at regions (a), (b) or (d).

\noindent {\bf Region (a)}. In this case, since $p \neq q$, it follows that $q> \frac{N}{N-\alpha}$ and, from direct calculation, one sees that $\frac{N}{N-\alpha} \geq \frac{2(\alpha +1)}{N + 1}$.

\noindent {\bf Regions (b) and (d)}.
In this cases  the condition $q> \max\left\{ \frac{2N}{2N-\alpha}, \frac{2(\alpha +1)}{N + 1} \right\}$ is imposed.

\medbreak    

    Now, since (here the condition $N-\alpha + \sigma >1$ enters)
    $$
    \int_{\rho}^{+\infty} \sqrt{\underline{W}} - \sqrt{\overline{W}} dt < +\infty, 
    $$
it follows that
   \begin{align*}
    &\lim\limits_{|x|\rightarrow +\infty}\underline{u}(x)|x|^{\frac{N-1}{2}}exp\int_{\rho}^{|x|}\sqrt{1-\dfrac{\mathcal{A}^{N-\alpha}}{t^{N-\alpha}}} dt \in (0, +\infty) \qquad{and} \\
    &\lim\limits_{|x|\rightarrow +\infty}\overline{u}(x)|x|^{\frac{N-1}{2}}exp\int_{\rho}^{|x|}\sqrt{1-\dfrac{\mathcal{A}^{N-\alpha}}{t^{N-\alpha}}} dt \in (0, +\infty).
   \end{align*}
    Therefore, repeating the arguments from Section \ref{duasquad}, we infer that
    $$
     \lim\limits_{|x|\rightarrow +\infty}u(x)|x|^{\frac{N-1}{2}}exp\int_{\rho}^{|x|}\sqrt{1-\dfrac{\mathcal{A}^{N-\alpha}}{t^{N-\alpha}}} dt \in (0, +\infty),  
    $$
    as desired.
\end{proof}

\subsection{Proof of Theorem \ref{decaimentosol} for the cases $p<2$ or $q<2$.} \label{secsub}

We start recalling \cite[Lemma 6.7]{Moroz2013}.
\begin{lemma}\label{lpmenor2}
Let $\rho > 0$, $\beta >0$, $\gamma > 0$ and $u\in C^2(\mathbb{R}^N \setminus B_\rho)$. If for every $x\in \mathbb{R}^N \setminus B_\rho$, 
$$
-\Delta u +\lambda u = \dfrac{1}{|x|^\beta} \qquad \text{and} \qquad
\lim\limits_{|x|\rightarrow +\infty}u(x)=0,
$$
then
$$
\lim\limits_{|x|\rightarrow +\infty}u(x)\lambda |x|^\beta = 1.
$$
\end{lemma}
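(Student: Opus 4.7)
The strategy is to build sub- and super-solutions of the form $C|x|^{-\beta}$ for the exterior problem and invoke the comparison principle. The heuristic is that at infinity $|\Delta(|x|^{-\beta})|=O(|x|^{-\beta-2})$ is negligible compared with $\lambda|x|^{-\beta}$, so the equation $-\Delta u+\lambda u = |x|^{-\beta}$ forces $\lambda u \sim |x|^{-\beta}$.

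First I would carry out the direct radial computation: for $C>0$,
\[
-\Delta\!\Bigl(\tfrac{C}{|x|^{\beta}}\Bigr) + \lambda\,\tfrac{C}{|x|^{\beta}} = \tfrac{C\lambda}{|x|^{\beta}} - \tfrac{C\,\beta(\beta+2-N)}{|x|^{\beta+2}} = \tfrac{C\lambda}{|x|^{\beta}}\bigl(1+o(1)\bigr),\qquad |x|\to\infty.
\]
Hence, for each $\varepsilon\in(0,1)$ there exists $R_\varepsilon>\rho$ such that, for $|x|\geq R_\varepsilon$, the functions $\overline{u}_\varepsilon(x)=\frac{1+\varepsilon}{\lambda |x|^{\beta}}$ and $\underline{u}_\varepsilon(x)=\frac{1-\varepsilon}{\lambda |x|^{\beta}}$ satisfy
\[
-\Delta \overline{u}_\varepsilon+\lambda\overline{u}_\varepsilon\geq \tfrac{1}{|x|^{\beta}}\geq -\Delta \underline{u}_\varepsilon+\lambda\underline{u}_\varepsilon.
\]

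Second, to absorb the boundary values on $\partial B_{R_\varepsilon}$, I would introduce a positive radial corrector $\psi\in C^2(\mathbb{R}^N\setminus B_{R_\varepsilon})$ solving $-\Delta\psi+\lambda\psi=0$ with $\psi\equiv 1$ on $\partial B_{R_\varepsilon}$ and $\psi(x)\to 0$ as $|x|\to\infty$; by Lemma \ref{lpm2} applied with $W\equiv\lambda$, such $\psi$ decays like $|x|^{-(N-1)/2}e^{-\sqrt{\lambda}\,|x|}$, so in particular $|x|^{\beta}\psi(x)\to 0$. Choosing a constant $M_\varepsilon$ so that $M_\varepsilon\psi(x)\geq |u(x)-\overline{u}_\varepsilon(x)|+|u(x)-\underline{u}_\varepsilon(x)|$ on $\partial B_{R_\varepsilon}$, the comparison functions $\overline{u}_\varepsilon+M_\varepsilon\psi$ and $\underline{u}_\varepsilon-M_\varepsilon\psi$ remain a super- and sub-solution of the equation in $\{|x|>R_\varepsilon\}$ and sandwich $u$ on $\partial B_{R_\varepsilon}$ as well as at infinity. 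The exterior maximum principle (available since $\lambda>0$ and $u\to 0$ at infinity) then gives
\[
\underline{u}_\varepsilon(x)-M_\varepsilon\psi(x)\;\leq\; u(x)\;\leq\; \overline{u}_\varepsilon(x)+M_\varepsilon\psi(x),\qquad |x|\geq R_\varepsilon.
\]

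Finally, multiplying through by $\lambda|x|^{\beta}$ yields
\[
(1-\varepsilon)-\lambda M_\varepsilon |x|^{\beta}\psi(x)\;\leq\; u(x)\lambda|x|^{\beta}\;\leq\;(1+\varepsilon)+\lambda M_\varepsilon |x|^{\beta}\psi(x),
\]
and the exponential decay of $\psi$ kills the outer corrections as $|x|\to\infty$, so $\limsup u\lambda|x|^\beta \leq 1+\varepsilon$ and $\liminf u\lambda|x|^\beta \geq 1-\varepsilon$; letting $\varepsilon\downarrow 0$ concludes. The only delicate point is the bookkeeping that keeps the boundary correction from polluting the leading asymptotic, and this is precisely what the exponentially decaying $\psi$ achieves — none of the steps is technically deep once the barrier $C|x|^{-\beta}$ is identified.
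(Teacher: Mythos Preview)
The paper does not prove this lemma; it simply recalls it from \cite[Lemma 6.7]{Moroz2013} without argument. Your barrier proof is correct and is essentially the standard route: the computation $-\Delta(|x|^{-\beta})+\lambda|x|^{-\beta}=\lambda|x|^{-\beta}+O(|x|^{-\beta-2})$ identifies $\frac{1}{\lambda}|x|^{-\beta}$ as the leading asymptotic, and the exponentially decaying homogeneous solution $\psi$ cleanly absorbs the boundary mismatch without affecting the limit. One small remark: the hypothesis ``$\gamma>0$'' in the statement is evidently a typo for ``$\lambda>0$'', and you rely on $\lambda>0$ in two places --- for the exterior maximum principle (so that a positive interior maximum of $w$ forces $-\Delta w+\lambda w>0$) and for the existence and exponential decay of $\psi$ --- so it would be worth making that assumption explicit when you write this up.
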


\medbreak
\begin{proof}[\textbf{Proof of Theorem \ref{decaimentosol}.}] Let us consider the case $p<2$. The key argument is to prove that the kernel $I_{\alpha}\ast |v|^q$ has a good decay at infinity, namely
\begin{equation}\label{desig}
\left| I_\alpha \ast |v|^q (x) - I_\alpha(x)\int_{\mathbb{R}^N} |v|^q \right| \leq \dfrac{\mu}{|x|^{N-\alpha+\delta}},
\end{equation}
for some positive $\mu$ and $\delta$.

\medbreak
\noindent {\bf Case $q>2$.} In this case, as proved at Section \ref{duasquad}, the decay at infinity for $v$ guarantees that $v\in L^1(\mathbb{R}^N)$. Then, going back to the start of the proof of Lemma \ref{decaimento} (since $(q-1)N >1$), it follows that one can take $\delta=1$, and then the condition in verified.

\medbreak
\noindent {\bf Case $q=2$.} First observe that in this case the pair $(p,q)$ cannot be at region (d) from Lemma \ref{decaimento}. Also observe that, at region (c), $p> \frac{2N}{2N-\alpha}$ implies $\frac{2-p}{N-\alpha}< \frac{pq}{N+\alpha p}$. So, condition \eqref{desig} follows from Lemma \ref{decaimento}.

\medbreak
\noindent {\bf Case $q<2$.} When $(p,q)$ belongs to one of the regions (a), (b) or (c) we argue as in the previous paragraph to obtain \eqref{desig} from Lemma \ref{decaimento}. When $(p,q)$ is at region (d), one should restart this proof for $v$, using Lemma \ref{decaimento} (d) to obtain the decay for $I_{\alpha}\ast |u|^p$, then to obtain the polynomial decay for $v$ (as ahead), namely
\[
\lim \limits_{|x| \to \infty} |v(x)|^q |x|^{\frac{q(N-\alpha)}{2-q}} \in (0, \infty).
\]
From this, then we apply Lemma \ref{auxiliar} with $\beta = \frac{q(N-\alpha)}{2-q}$, and observe that $\beta>N$, since $q> \frac{2N}{2N-\alpha}$.

\medbreak
From Theorems \ref{t3} and \ref{t10novo}, $u>0$ in $\mathbb{R}^N$ and $u\in C^2(\mathbb{R}^N)$. Hence, by the chain rule, $u^{2-p}\in C^2(\mathbb{R}^N)$ and
$$
-\Delta u^{2-p} = -(2-p)u^{1-p}\Delta u +(2-p)(p-1)|\nabla u|^2 \quad \text{in} \quad \mathbb{R}^N.
$$
Since $p\in \left(\max\left\{1, \frac{2\alpha}{N}\right\}, 2\right)$ and $(u, v)$ is a solution of \eqref{P}, from \eqref{desig}, for every $x\in \mathbb{R}^N$,
$$
-\Delta u^{2-p}(x)+(2-p)u^{2-p}(x) \geq (2-p)\dfrac{2p}{p+q}I_\alpha (x) \left(\int_{\mathbb{R}^N} |v|^q dx -\dfrac{\mu}{|x|^\delta}\right).
$$
Now, consider $\underline{u}:\mathbb{R}^N \setminus B_1 \rightarrow \mathbb{R}$ a solution of
$$
\left\{
\begin{array}{lll}
     -\Delta \underline{u} + (2-p)\underline{u} = (2-p)\dfrac{2p}{p+q}I_\alpha (x) \left(\displaystyle\int_{\mathbb{R}^N} |v|^q dx -\dfrac{\mu}{|x|^\delta}\right), \mbox{ \ in \ } \mathbb{R}^N \setminus B_1, \vspace{5pt}\\
     \underline{u}(x)=u^{p-2}(x), \ \ \ \ \ \ \ \ \ \ \ \ \ \ \ \ \ \ \ \ \ \ \ \ \ \ \ \ \ \ \ \ \ \ \ \ \ \ \ \ \ \ \ \ \ \ \ \ \ \ \ \ \ \ \ \ \ \ \ \ \ \ \ \ \ \ \ \ \ \ \ \ \mbox{ \ in \ } \partial B_1, \vspace{5pt}\\
     \lim\limits_{|x|\rightarrow +\infty}\underline{u}(x)=0.
\end{array}
\right.
$$
Applying the linearity of the operator $-\Delta +1$ and Lemma \ref{lpmenor2} to each equation with $\lambda=2-p$ and $\beta=N-\alpha$, and, $\lambda=2-p$ and $\beta=N-\alpha+\delta$, concluding that
$$
\lim\limits_{|x|\rightarrow +\infty}\dfrac{\underline{u}(x)}{I_\alpha(x)}= \dfrac{2p}{p+q}\int_{\mathbb{R}^N}|v|^q dx.
$$
By the comparison principle, $\underline{u}\leq u^{2-p}$ in $\mathbb{R}^N \setminus B_1$, which implies
\begin{equation}\label{subsol}
    \liminf\limits_{|x|\rightarrow +\infty}\dfrac{u^{2-p}(x)}{I_\alpha(x)}\geq \dfrac{2p}{p+q}\int_{\mathbb{R}^N} |v|^q dx.
\end{equation}

On the other hand, applying the Young's inequality to the kernel, we have
$$
\dfrac{2p}{p+q}(I_\alpha \ast |v|^q)|u|^{p-2}u\leq \left(\dfrac{2p}{p+q}\right)^{\frac{1}{2-p}}(2-p)(I_\alpha \ast |v|^q)^{\frac{1}{2-p}}+(p-1)u. 
$$
From \eqref{desig} and the mean value theorem, for $x\in \mathbb{R}^N \setminus B_1$, it follows that
$$
(I_\alpha \ast |v|^q)^{\frac{1}{2-p}}(x) \leq I_{\alpha}^{\frac{1}{2-p}}(x)\left( \left(\int_{\mathbb{R}^N}|v|^q dx \right)^{\frac{1}{2-p}}+\dfrac{\nu}{|x|^\delta}\right).
$$
Hence, for every $x\in \mathbb{R}^N \setminus B_1$, we infer that
$$
    -\Delta u + (2-p)u  \leq \left(\dfrac{2p}{p+q}\right)^{\frac{1}{2-p}}(2-p)I_{\alpha}^{\frac{1}{2-p}}(x)\left( \left(\int_{\mathbb{R}^N}|v|^q dx \right)^{\frac{1}{2-p}}+\dfrac{\nu}{|x|^\delta}\right).
$$
Consider, now, $\overline{u}\in C^2(\mathbb{R}^N \setminus B_1)$ a solution 
$$
\left\{
\begin{array}{lll}
     -\Delta \overline{u} + (2-p)\overline{u} = \left(\dfrac{2p}{p+q}\right)^{\frac{1}{2-p}}(2-p)I_{\alpha}^{\frac{1}{2-p}}(x)\displaystyle\left( \left(\int_{\mathbb{R}^N}|v|^q dx \right)^{\frac{1}{2-p}}+\dfrac{\nu}{|x|^\delta}\right), \mbox{ \ in \ } \mathbb{R}^N \setminus B_1, \vspace{5pt}\\
     \overline{u}(x)=u(x), \ \ \ \ \ \ \ \ \ \ \ \ \ \ \ \ \ \ \ \ \ \ \ \ \ \ \ \ \ \ \ \ \ \ \ \ \ \ \ \ \ \ \ \ \ \ \ \ \ \ \ \ \ \ \ \ \ \ \ \ \ \ \ \ \ \ \ \ \ \ \ \ \ \ \ \ \ \ \ \ \ \ \ \ \ \ \  \mbox{ \ in \ } \partial B_1, \vspace{5pt}\\
     \lim\limits_{|x|\rightarrow +\infty}\overline{u}(x)=0.
\end{array}
\right.
$$
Once again, using the linearity of the operator $-\Delta +1$ and applying the Lemma \ref{lpmenor2} to each of the new equations, we conclude that
$$
\lim\limits_{|x|\rightarrow +\infty} \dfrac{\overline{u}(x)}{I_{\alpha}^{\frac{1}{2-p}}(x)}= \left(\dfrac{2p}{p+q}\right)^{\frac{1}{2-p}} \left(\int_{\mathbb{R}^N}|v|^q dx \right)^{\frac{1}{2-p}}.
$$
Therefore, by the comparison principle, $u\leq \overline{u}$ in $\mathbb{R}^N \setminus B_1$ and
$$
\limsup\limits_{|x|\rightarrow +\infty}\dfrac{u^{2-p}(x)}{I_\alpha(x)}\leq \dfrac{2p}{p+q}\int_{\mathbb{R}^N}|v|^q dx,
$$
concluding the proof. 
\end{proof}

\section{A non-existence result}\label{sec-NE}

In this section we establish the region of a $(p, q)$-plan where no nontrivial solution for \eqref{P} exists. To achieve this aim, we will use a Pohoz\v{a}ev-type identity.

\begin{proposition}\label{p1}
Let $0 < \alpha< N$, $p>1, q>1, \theta_1>1, \theta_2> 1$ be such that $\frac{1}{\theta_1}+ \frac{1}{\theta_ 2} = \frac{N+\alpha}{N}$. If $(u, v)$ is a weak solution of \eqref{P} such that $u\in H^{1}(\mathbb{R}^N) \cap W_{loc}^{2, 2}(\mathbb{R}^N)\cap W^{1, \theta_1 p}(\mathbb{R}^N)$ and $v\in H^{1}(\mathbb{R}^N)\cap W_{loc}^{2, 2}(\mathbb{R}^N)\cap W^{1, \theta_2 q}(\mathbb{R}^N)$, then
$$
\dfrac{N-2}{2}\int_{\mathbb{R}^N} (|\nabla u|^2+|\nabla v|^2) dx + \dfrac{N}{2}\int_{\mathbb{R}^N} u^2 + v^2 dx = 2\dfrac{N+\alpha}{p+q}\int_{\mathbb{R}^N} (I_\alpha \ast |u|^p)|v|^q dx. 
$$
\end{proposition}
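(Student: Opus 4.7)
The plan is to derive the identity by multiplying the first (resp.\ second) equation of \eqref{P} by the Pohozaev multiplier $x\cdot\nabla u$ (resp.\ $x\cdot\nabla v$), summing, and integrating. Since these multipliers do not belong to $H^1(\mathbb{R}^N)$, I would localize with a standard cutoff: fix $\eta\in C_c^\infty(\mathbb{R}^N)$ with $\eta\equiv 1$ on $B_1$ and $\eta\equiv 0$ outside $B_2$, set $\eta_R(x)=\eta(x/R)$, and use $\varphi_R:=\eta_R(x\cdot\nabla u)$ and $\psi_R:=\eta_R(x\cdot\nabla v)$ as test functions in the weak form of \eqref{P}. The hypothesis $u,v\in W^{2,2}_{\mathrm{loc}}(\mathbb{R}^N)$ guarantees that $\varphi_R,\psi_R\in H^1(\mathbb{R}^N)$ with compact support, so the pairing is legitimate.

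For the linear terms, the classical Pohozaev computation (expand $\nabla(\eta_R(x\cdot\nabla u))$, rewrite the resulting quadratic combination as $\tfrac12\eta_R\,x\cdot\nabla|\nabla u|^2$ and integrate by parts, and treat $u\varphi_R=\tfrac12\eta_R\,x\cdot\nabla u^2$ analogously) yields
\begin{align*}
\int_{\mathbb{R}^N}\nabla u\cdot\nabla\varphi_R + u\varphi_R\,dx = \tfrac{2-N}{2}\!\int_{\mathbb{R}^N}\!\eta_R|\nabla u|^2\,dx - \tfrac{N}{2}\!\int_{\mathbb{R}^N}\!\eta_R u^2\,dx + E_u(R),
\end{align*}
and the analogous identity for $v$, where $E_u(R)$ collects remainder terms supported in $\{R\le|x|\le 2R\}$ and dominated by $C(\|\nabla u\|_{L^2(B_{2R}\setminus B_R)}^2 + \|u\|_{L^2(B_{2R}\setminus B_R)}^2)$. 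Because $u\in H^1(\mathbb{R}^N)$ one has $E_u(R)\to 0$, and the bulk terms converge to $\tfrac{2-N}{2}\|\nabla u\|_2^2 - \tfrac{N}{2}\|u\|_2^2$ by dominated convergence.

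For the nonlocal side the cleanest route is the scaling identity
$$
J(u(\lambda\cdot),v(\lambda\cdot)) = \lambda^{-(N+\alpha)}J(u,v),\qquad J(u,v):=\int_{\mathbb{R}^N}(I_\alpha\ast|u|^p)|v|^q\,dx,
$$
obtained by the change of variables $(x,y)\mapsto(x/\lambda,y/\lambda)$ in the double integral defining $J$. Differentiating in $\lambda$ at $\lambda=1$ and using the symmetry of the convolution yields
$$
p\!\int_{\mathbb{R}^N}(I_\alpha\ast|v|^q)|u|^{p-2}u(x\cdot\nabla u)\,dx + q\!\int_{\mathbb{R}^N}(I_\alpha\ast|u|^p)|v|^{q-2}v(x\cdot\nabla v)\,dx = -(N+\alpha)J(u,v),
$$
and multiplication by $\tfrac{2}{p+q}$ matches precisely the sum of the right-hand sides of the two equations tested with $\varphi_R,\psi_R$, once $\eta_R$ is removed. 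The removal of $\eta_R$ in the limit is legal because $|u|^{p-2}u\nabla u=\tfrac1p\nabla|u|^p$ together with H\"older's inequality gives $\nabla|u|^p\in L^{\theta_1}(\mathbb{R}^N)$ (since $W^{1,\theta_1 p}\hookrightarrow L^{\theta_1 p}$ forces $|u|^{p-1}\in L^{\theta_1 p/(p-1)}$, while $\nabla u\in L^{\theta_1 p}$), analogously $\nabla|v|^q\in L^{\theta_2}(\mathbb{R}^N)$, and the constraint $\tfrac{1}{\theta_1}+\tfrac{1}{\theta_2}=\tfrac{N+\alpha}{N}$ then lets \eqref{HLS1} supply an integrable majorant for dominated convergence.

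Summing the linear and nonlocal limits produces $\tfrac{2-N}{2}\int(|\nabla u|^2+|\nabla v|^2)\,dx - \tfrac{N}{2}\int(u^2+v^2)\,dx = -\tfrac{2(N+\alpha)}{p+q}\int(I_\alpha\ast|u|^p)|v|^q\,dx$; multiplying by $-1$ yields the stated identity. The main obstacle I anticipate is precisely the rigorous justification of the differentiation under the integral in the scaling step (equivalently, the limit $\eta_R\to 1$ in the cutoff form of the nonlocal integral): this is where all three regularity hypotheses interact — $W^{2,2}_{\mathrm{loc}}$ for the admissibility of the Pohozaev test functions, $H^1$ for the vanishing of the linear boundary contributions, and $W^{1,\theta_1 p}\times W^{1,\theta_2 q}$ combined with \eqref{HLS1} for controlling the cross-gradient terms appearing in the nonlocal identity.
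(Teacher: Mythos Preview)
Your proposal is correct and follows essentially the same route as the paper: the paper's proof is a one-paragraph reference to \cite[Proposition~3.1]{Moroz2013}, introducing the truncated Poho\v{z}aev multipliers $f_\lambda(x)=\varphi(\lambda x)\,x\cdot\nabla u(x)$ and $g_\lambda(x)=\varphi(\lambda x)\,x\cdot\nabla v(x)$ (your $\eta_R$ with $\lambda=1/R$) and passing to the limit. Your scaling heuristic for the nonlocal term is a convenient way to identify the target value $-(N+\alpha)J(u,v)$, and your justification via $\nabla|u|^p\in L^{\theta_1}$, $\nabla|v|^q\in L^{\theta_2}$ together with \eqref{HLS1} is exactly the mechanism that makes the cutoff removal rigorous, matching how the regularity hypotheses are used in the reference.
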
 
\begin{proof}
The proof can be done similarly as \cite[Proposition 3.1]{Moroz2013}, considering $\varphi\in C_{c}^{1}(\mathbb{R}^N)$ such that $\varphi \equiv 1$ in $B_1$ and defining, for $\lambda\in (0, +\infty)$ and $x\in \mathbb{R}^N$, $f_\lambda \in W^{1, 2}(\mathbb{R}^N)\cap L^{1, \theta_1 p}(\mathbb{R}^N)$ and $g_\lambda \in W^{1, 2}(\mathbb{R}^N)\cap L^{1, \theta_2 q}(\mathbb{R}^N)$ by
$$
f_\lambda(x)=\varphi(\lambda x)x\cdot \nabla u(x) \mbox{ \ \ \ and \ \ \ } g_\lambda(x)=\varphi(\lambda x)x\cdot \nabla v(x),
$$
respectively.
\end{proof}

\begin{proof}[{\bf Proof of Theorem \ref{t4}}]
From the fact that $(u, v)$ is a weak solution of \eqref{P} and applying Proposition \ref{p1}, we infer that
$$
\left(\dfrac{N-2}{2}-\dfrac{N+\alpha}{p+q}\right)\int_{\mathbb{R}^N} |\nabla u|^2 + |\nabla v|^2 dx + \left(\dfrac{N}{2}-\dfrac{N+\alpha}{p+q}\right)\int_{\mathbb{R}^N} u^2 + v^2 dx = 0.
$$
From this identity, if $(p+q)(N-2) \geq 2(N+\alpha)$, then $\dfrac{N}{2}-\dfrac{N+\alpha}{p+q}>0$, which imply that $u=v=0$. On the other hand, if $(p+q)N \leq 2(N+\alpha)$, then $\dfrac{N-2}{2}-\dfrac{N+\alpha}{p+q}<0$, which imply that  $\nabla u=\nabla v=0$, and then $u=v=0$.
\end{proof}

\noindent{\bf Data availability} All data generated or analyzed during this study are included in this article.

\medbreak

\noindent{\bf Acknowledgements.} Eduardo de Souza B\"oer is supported by FAPESP/Brazil grant 2023/05445-2. \linebreak Ederson Moreira dos Santos is partially supported by CNPq/Brazil grant 312867/2023-9 and \linebreak FAPESP/Brazil grant 2022/16407-1.

\vspace{1cm}
\noindent\textsc{Eduardo de Souza B\"oer and Ederson Moreira dos Santos}\\
Instituto de Ci\^encias Matem\'aticas e de Computa\c c\~ao\\
Universidade de S\~ao Paulo -- USP\\
13566-590, Centro, S\~ao Carlos - SP, Brazil\\
\noindent\texttt{eduardoboer@usp.br, ederson@icmc.usp.br}.


\begin{thebibliography}{10}

\bibitem{BBL}
Rafael Benguria, Haim Brezis, and Elliott~H. Lieb.
\newblock The {T}homas-{F}ermi-von {W}eizs{\"a}cker theory of atoms and
  molecules.
\newblock {\em Communications in Mathematical Physics}, 79(2):167--180, 1981.

\bibitem{Bhattarai}
Santosh Bhattarai.
\newblock {Existence and stability of standing waves for coupled nonlinear
  Hartree type equations}.
\newblock {\em Journal of Mathematical Physics}, 60(2):021505, 02 2019.

\bibitem{BrezisLieb1983}
Ha\"{\i}m Br\'{e}zis and Elliott Lieb.
\newblock A relation between pointwise convergence of functions and convergence
  of functionals.
\newblock {\em Proc. Amer. Math. Soc.}, 88(3):486--490, 1983.

\bibitem{Brock1999}
Friedemann Brock and Alexander~Yu. Solynin.
\newblock An approach to symmetrization via polarization.
\newblock {\em Transactions of the American Mathematical Society},
  352(4):1759--1796, 1999.

\bibitem{Chen2023a}
Jianqing Chen and Qian Zhang.
\newblock Existence of positive ground state solutions for the coupled
  {Choquard} system with potential.
\newblock {\em Mathematische Nachrichten}, 296(9):4043--4059, 2023.

\bibitem{Cingolani-Clapp-Secchi}
Silvia Cingolani, M{\'o}nica Clapp, and Simone Secchi.
\newblock Multiple solutions to a magnetic nonlinear {C}hoquard equation.
\newblock {\em Zeitschrift f{\"u}r angewandte Mathematik und Physik},
  63(2):233--248, 2012.

\bibitem{Cingolani2016}
Silvia Cingolani and Tobias Weth.
\newblock On the planar {Schr{\"o}dinger}-{Poisson} system.
\newblock {\em Annales de l'Institut Henri Poincar{\'e}. Analyse Non
  Lin{\'e}aire}, 33(1):169--197, 2016.

\bibitem{Froehlich1939}
H.~Fr{\"o}hlich.
\newblock Theory of electrical breakdown in ionic crystals. {II}.
\newblock {\em Proceedings of the Royal Society of London. Series A.
  Mathematical and Physical Sciences}, 172:94--106, 1939.

\bibitem{seminaire}
J{\"u}rg Fr{\"o}hlich and Enno Lenzmann.
\newblock Mean-field limit of quantum bose gases and nonlinear hartree
  equation.
\newblock {\em S{\'e}minaire {\'E}quations aux d{\'e}riv{\'e}es partielles
  (Polytechnique) dit aussi ``S{\'e}minaire Goulaouic-Schwartz''}, (Talk no.
  18):26 p, 2003-2004.

\bibitem{Froehlich1954}
H.~Fröhlich.
\newblock Electrons in lattice fields.
\newblock {\em Advances in Physics}, 3(11):325--361, 1954.

\bibitem{Kavian1993}
Otared Kavian.
\newblock {\em Introduction {\`a} la th{\'e}orie des points critiques et
  applications aux probl{\`e}mes elliptiques}, volume~13 of {\em Math. Appl.
  (Berl.)}.
\newblock Paris: Springer-Verlag, 1993.

\bibitem{lewin}
Mathieu Lewin, Phan~Th{\`a}nh Nam, and Nicolas Rougerie.
\newblock Derivation of {H}artree's theory for generic mean-field {B}ose
  systems.
\newblock {\em Advances in Mathematics}, 254:570--621, 2014.

\bibitem{Lieb1977}
Elliott~H. Lieb.
\newblock Existence and uniqueness of the minimizing solution of {C}hoquard's
  nonlinear equation.
\newblock {\em Studies in Applied Mathematics}, 57(2):93--105, 1977.

\bibitem{Lieb1983}
Elliott~H. Lieb.
\newblock Sharp constants in the {H}ardy-{L}ittlewood-{S}obolev and related
  inequalities.
\newblock {\em Annals of Mathematics}, 118(2):349--374, 1983.

\bibitem{Lieb2001}
Elliott~H. Lieb and Michael Loss.
\newblock {\em Analysis}, volume~14 of {\em Graduate Studies in Mathematics}.
\newblock American Mathematical Society, Providence, RI, second edition, 2001.

\bibitem{Lieb-Simon}
Elliott~H. Lieb and Barry Simon.
\newblock The {H}artree-{F}ock theory for {C}oulomb systems.
\newblock {\em Communications in Mathematical Physics}, 53(3):185--194, 1977.

\bibitem{Lions1985II}
P.-L. Lions.
\newblock The concentration-compactness principle in the calculus of
  variations. {T}he limit case. {II}.
\newblock {\em Rev. Mat. Iberoamericana}, 1(2):45--121, 1985.

\bibitem{Lions-Hartree}
P.~L. Lions.
\newblock Solutions of {H}artree-{F}ock equations for {C}oulomb systems.
\newblock {\em Communications in Mathematical Physics}, 109(1):33--97, 1987.

\bibitem{Ma-Zao-2010}
Li~Ma and Lin Zhao.
\newblock Classification of positive solitary solutions of the nonlinear
  {C}hoquard equation.
\newblock {\em Archive for Rational Mechanics and Analysis}, 195(2):455--467,
  2010.

\bibitem{Moroz2013}
Vitaly Moroz and Jean~Van Schaftingen.
\newblock Groundstates of nonlinear {C}hoquard equations: Existence,
  qualitative properties and decay asymptotics.
\newblock {\em Journal of Functional Analysis}, 265(2):153--184, 2013.

\bibitem{Moroz2017}
Vitaly Moroz and Jean~Van Schaftingen.
\newblock A guide to the {C}hoquard equation.
\newblock {\em Journal of Fixed Point Theory and Applications}, 19(1):773--813,
  2017.

\bibitem{Penrose1996}
Roger Penrose.
\newblock On gravity's role in quantum state reduction.
\newblock {\em General Relativity and Gravitation}, 28(5):581--600, 1996.

\bibitem{Van2008}
Jean Van~Schaftingen and Michel Willem.
\newblock Symmetry of solutions of semilinear elliptic problems.
\newblock {\em J. Eur. Math. Soc. (JEMS)}, 10(2):439--456, 2008.

\bibitem{Wang-Shi}
Jun Wang and Junping Shi.
\newblock Standing waves for a coupled nonlinear {H}artree equations with
  nonlocal interaction.
\newblock {\em Calculus of Variations and Partial Differential Equations},
  56(6):168, 2017.

\bibitem{Willem1996}
Michel Willem.
\newblock {\em Minimax theorems}, volume~24 of {\em Prog. Nonlinear Differ.
  Equ. Appl.}
\newblock Boston: Birkh{\"a}user, 1996.

\bibitem{Wilson1955}
A.~J.~C. Wilson.
\newblock Untersuchungen über die elektronentheorie der kristalle by {S}. {I}.
  {P}ekar.
\newblock {\em Acta Crystallographica}, 8(1):70--70, 1955.

\end{thebibliography}
\end{document}